\newtheorem{thm}{Theorem}[section]
\newtheorem{prop}[thm]{Proposition}
\newtheorem{lem}[thm]{Lemma}
\newtheorem{cor}[thm]{Corollary}
\theoremstyle{definition}
\theoremstyle{claim}
\newtheorem{claim}[thm]{Claim}
\theoremstyle{remark}
\newtheorem{remark}[thm]{Remark}
\numberwithin{equation}{section}
\begin{document}


\title{$p$-Harmonic Maps to $S^1$ and Stationary Varifolds of Codimension 2}

\author{Daniel L. Stern}
\address{Department of Mathematics, Princeton University, 
Princeton, NJ 08544}
\email{dls6@math.princeton.edu}


\begin{abstract} We study the asymptotics as $p\uparrow 2$ of stationary $p$-harmonic maps $u_p\in W^{1,p}(M,S^1)$ from a compact manifold $M^n$ to $S^1$, satisfying the natural energy growth condition
$$\int_M|du_p|^p=O(\frac{1}{2-p}).$$
Along a subsequence $p_j\to 2$, we show that the singular sets $Sing(u_{p_j})$ converge to the support of a stationary, rectifiable $(n-2)$-varifold $V$ of density $\Theta_{n-2}(\|V\|,\cdot)\geq 2\pi$, given by the concentrated part of the measure
$$\mu=\lim_{j\to\infty}(2-p_j)|du_{p_j}|^{p_j}dv_g.$$
When $n=2$, we show moreover that the density of $\|V\|$ takes values in $2\pi\mathbb{N}$. Finally, on every compact manifold of dimension $n\geq 2$ we produce examples of nontrivial families $(1,2)\ni p\mapsto u_p\in W^{1,p}(M,S^1)$ of such maps via natural min-max constructions.
\end{abstract}

\maketitle



\section{Introduction}

\hspace{3mm} In their 1995 paper \cite{HL2}, Hardt and Lin consider the following question: given a simply connected domain $\Omega\subset \mathbb{R}^2$ and a map $g:\partial\Omega\to S^1$ of nonzero degree, what can be said about the limiting behavior of maps 
$$u_p\in W_g^{1,p}(\Omega,S^1):=\{u\in W^{1,p}(\Omega,S^1)\mid u_p|_{\partial\Omega}=g\}$$ 
minimizing the $p$-energy
$$\int_{\Omega}|du_p|^p=\min\{\int_{\Omega}|du|^p\mid u\in W_g^{1,p}(\Omega,S^1)\}$$
as $p\in (1,2)$ approaches $2$ from below? They succeed in showing--among other things--that away from a collection $A$ of $|deg(g)|$ singularities, a subsequence $u_{p_j}$ converges strongly to a harmonic map $v\in C^1_{loc}(\Omega\setminus A,S^1)$, and the measures 
$$\mu_j=(2-p_j)|du_j|^{p_j}(z)dz$$
converge to the sum $\Sigma_{a\in A}2\pi \delta_a$ of Dirac masses on $A$ \cite{HL2}. Moreover, the singular set $A=\{a_1,\ldots, a_{|\deg(g)|}\}$ minimizes a certain ``renormalized energy" function $W_g:\Omega^{|\deg(g)|}\to [0,\infty]$ associated to $g$, providing a strong constraint on the location of the singularities. In particular, though the homotopically nontrivial boundary map $g$ admits no extension to an $S^1$-valued map of finite Dirichlet energy--i.e, $W_g^{1,2}(\Omega,S^1)=\varnothing$--the limit of the $p$-energy minimizers as $p\uparrow 2$ provides us with a natural candidate for the optimal harmonic extension of $g$ to an $S^1$-valued map on $\Omega$.

\hspace{3mm} The results of \cite{HL2} were inspired in large part by the similar results of Bethuel, Brezis, and H\'{e}lein--contained in the influential monograph \cite{BBH}--concerning the asymptotics for minimizers $u_{\epsilon}$ of the Ginzburg-Landau functionals
$$E_{\epsilon}:W^{1,2}(\Omega,\mathbb{R}^2)\to \mathbb{R},\text{ }E_{\epsilon}(u)=\int_{\Omega}\frac{1}{2}|du|^2+\frac{(1-|u|^2)^2}{4\epsilon^2}$$
as $\epsilon\to 0$, with the measures
$$\mu_{\epsilon}:=\frac{|du_{\epsilon}(z)|^2}{2|\log\epsilon|}dz$$
taking on the role played by the measures $(2-p)|du|^p(z)dz$ in the setting of \cite{HL2}. In recent decades, the asymptotics for critical points of the Ginzburg-Landau functionals $E_{\epsilon}$ in higher dimensions have also been studied by a number of authors, often with an emphasis on the relationship betweeen concentration phenomena for the measures $\mu_{\epsilon}$ and minimal submanifolds of codimension two (see, for instance, \cite{BBO},\cite{BOSp},\cite{Cheng},\cite{LR},\cite{Ste1},\cite{Ste2}, among many others). A typical result says roughly that if the measures $\mu_{\epsilon}$ have uniformly bounded mass, then a subsequence $(C^0)^*$-converges as $\epsilon\to 0$ to a limiting measure that decomposes into two pieces: a concentrated component given by a stationary, rectifiable varifold $V^{GL}$ of codimension two, and a diffuse measure of the form $|h|^2dvol$ for some harmonic one-form $h$ (which vanishes under mild compactness assumptions) (see, e.g., \cite{BBO},\cite{BOSp},\cite{Cheng},\cite{Ste2}). 

\hspace{3mm} Results of this type point to the possibility of employing variational methods for the Ginzburg-Landau functionals to produce minimal submanifolds of codimension two, but for the complete success of such efforts, we need an improved understanding of the concentration of $\mu_{\epsilon}$. In particular, the question of integrality (up to a factor of $\pi$) of the limiting varifold $V^{GL}$ has been resolved only in dimension two \cite{CM} and for local minimizers in higher dimensions \cite{LR}. If, on the other hand, one could establish integrality of $V^{GL}$ for general families of critical points (or families with bounded index), then the min-max methods of \cite{Ste1} and \cite{Ste2} would provide a new proof of the existence of nontrivial stationary, integral varifolds of codimension two, in the spirit of Guaraco's work for the Allen-Cahn equation in codimension one \cite{Gu}.

\hspace{3mm} In this paper, motivated by analogy with the Ginzburg-Landau setting, we investigate the limiting behavior as $p\uparrow 2$ of stationary $p$-harmonic maps $u_p\in W^{1,p}(M^n,S^1)$ from an arbitrary compact, oriented manifold $M$ to the circle. At the global level, we find that the limiting behavior of the maps $u_p$ and their energy measures strongly resembles the asymptotics described above for solutions to the Ginzburg-Landau equations. At the smallest scales, however, the comparatively straightforward blow-up analysis for $p$-harmonic maps leads us to some simpler arguments and sharper estimates than are currently available in the Ginzburg-Landau setting. 

\hspace{3mm} Our first result tells us that the limiting behavior of the energy measures
$$\mu_p=(2-p)|du_p|^pdvol_g$$
mirrors that of the measures $\mu_{\epsilon}$ in the Ginzburg-Landau setting, with the modest improvement of a sharp lower bound for the density of the concentration varifold:

\begin{thm}\label{mainthm1} Let $p_i\in (1,2)$ be a sequence with $\lim_{i\to\infty}p_i=2$, and let $u_i\in W^{1,p_i}(M^n,S^1)$ be a sequence of stationary $p_i$-harmonic maps from a compact, oriented Riemannian manifold $M^n$ to the circle, satisfying
\begin{equation}
\sup_i(2-p_i)\int_M|du_i|^{p_i}<\infty.
\end{equation}
Then (a subsequence of) the energy measures $\mu_i=(2-p_i)|du_i|^{p_i}dv_g$ converge weakly in $(C^0)^*$ to a limiting measure $\mu$ of the form
\begin{equation}
\mu=\|V\|+|\bar{h}|^2dvol_g,
\end{equation}
where $\bar{h}$ is a harmonic one-form, and $V$ is a stationary, rectifiable $(n-2)$ varifold. Furthermore, the support of $V$ is given by the Hausdorff limit
$$spt(V)=\lim_{i\to\infty}Sing(u_i)$$
of the singular sets $Sing(u_i)$, and the density $\Theta_{n-2}(\|V\|,\cdot)$ satisfies 
\begin{equation}
\Theta_{n-2}(\|V\|,x)\geq 2\pi\text{ for }x\in spt(V).
\end{equation}
\end{thm}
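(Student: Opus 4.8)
The plan is to exploit the special structure of maps into $S^1$. For $u_i$ smooth away from $Sing(u_i)$ write $j_i := j(u_i) = \mathrm{Im}(\bar u_i\,du_i)$ for the momentum $1$-form, so that $|j_i| = |du_i|$ off $Sing(u_i)$, $j_i$ is closed off $Sing(u_i)$, the $p_i$-harmonic map equation reads $d^*(|j_i|^{p_i-2}j_i)=0$, and on all of $M$ the distributional $(n-2)$-current $T_i := \tfrac{1}{2\pi}\star dj_i$ is an integer multiplicity cycle carried by $Sing(u_i)$. Introduce the rescaled forms $\psi_i := (2-p_i)^{1/2}|j_i|^{(p_i-2)/2}j_i$, chosen so that $|\psi_i|^2\,dv_g = \mu_i$; the energy hypothesis bounds $\psi_i$ in $L^2(M)$, so after passing to a subsequence $\psi_i \rightharpoonup \bar h$ weakly in $L^2$ and $\mu_i \rightharpoonup \mu \geq |\bar h|^2\,dv_g$ as Radon measures, and I set $\nu := \mu - |\bar h|^2\,dv_g \geq 0$.

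Next I would invoke the monotonicity formula and $\varepsilon$-regularity theorem for stationary $p$-harmonic maps --- with constants uniform as $p\uparrow 2$, which requires some care --- so that there is $\varepsilon_0>0$ with $r^{p_i-n}\int_{B_r(x)}|du_i|^{p_i} < \varepsilon_0$ forcing $u_i$ smooth on $B_{r/2}(x)$ with scale-invariant gradient bounds. Monotonicity makes $r^{2-n}\mu(B_r(x))$ almost-monotone, so the concentration set $\Sigma := \{x : \Theta^{*\,n-2}(\mu,x)>0\}$ is well behaved, has finite $\mathcal H^{n-2}$ measure, and (by $\varepsilon$-regularity) off $\Sigma$ the $u_i$ are eventually singularity-free with uniform local estimates. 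Elliptic regularity then gives $(2-p_i)^{1/2}j_i \to \bar h$ locally strongly in $L^2(M\setminus\Sigma)$; passing $dj_i=0$ and $d^*(|j_i|^{p_i-2}j_i)=0$ to the limit (using that $|j_i|^{p_i-2}\to 1$ in the relevant regime) shows $\bar h$ is harmonic there, and since $\Sigma$ has codimension two a removability argument promotes $\bar h$ to a global harmonic $1$-form on $M$. Strong convergence off $\Sigma$ gives that the restriction of $\mu$ to $M\setminus\Sigma$ equals $|\bar h|^2\,dv_g$, so $\nu$ is supported on $\Sigma$ and carries all the concentration.

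For the varifold structure of $\nu$ I would combine two ingredients. First, an annular $p$-energy lower bound: whenever $u_i$ has nonzero degree on the circles linking $Sing(u_i)$ one has $\int_{B_\rho\setminus B_r}|du_i|^{p_i}\geq \tfrac{2\pi}{2-p_i}(1+o(1))$, the $p$-energy analogue of the classical $\pi|\deg|^2\log(\rho/r)$ estimate; this yields $\mathbf M(T_i)\leq C\,\mu_i(M)$, so $T_i\rightharpoonup T$ with $T$ an integer-rectifiable $(n-2)$-cycle supported in $\Sigma$. Second, a diagonal blow-up at each $x\in\Sigma$: the annular bound together with monotonicity forces $\nu(B_r(x))\geq 2\pi\,\omega_{n-2}r^{n-2}(1+o(1))$ whenever $\Theta^{n-2}(\nu,x)>0$ --- which, by monotonicity, is exactly the case on $spt(V)$ --- giving the density bound $\Theta_{n-2}(\nu,x)\geq 2\pi$, and identifies the tangent measures of $\nu$ with $(n-2)$-planes of density in $[2\pi,\infty)$; hence $\nu$ is $\mathcal H^{n-2}$-rectifiable, and taking $V$ to be the rectifiable $(n-2)$-varifold of weight $\nu$ gives $\mu = \|V\| + |\bar h|^2\,dv_g$. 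Stationarity of $V$ follows by multiplying the inner-variation (divergence-of-stress-energy) identity for $u_i$ by $(2-p_i)$ and passing to the limit: the diffuse part produces the stress-energy of the harmonic form $\bar h$, which is divergence free and decouples, leaving the stress-energy of $V$ divergence free. Finally $spt(V)=\lim_i Sing(u_i)$ comes from the same two estimates: $\varepsilon$-regularity puts every Hausdorff-limit point of $Sing(u_i)$ into $\Sigma=spt(V)$, while the lower density bound and the contrapositive of $\varepsilon$-regularity force a singularity of $u_i$ near every point of $spt(V)$.

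\textbf{Main obstacle.} The crux is the last package: obtaining the sharp constant $2\pi$ in the density bound and the rectifiability of $\nu$ together. This demands the annular $p$-energy estimate with the correct leading constant and an error uniform as $p_i\uparrow 2$, and a blow-up analysis showing the concentrated energy organizes into cylindrical single-vortex configurations near $\Sigma$ --- in particular ruling out any energy leaking into diffuse or lower-dimensional pieces, so that $\mu=\|V\|+|\bar h|^2\,dv_g$ is an exact identity rather than merely a lower bound. A secondary technical point is verifying that monotonicity and $\varepsilon$-regularity for \emph{stationary} (not only minimizing) $p$-harmonic maps into $S^1$ hold with constants independent of $p$ as $p\to 2$.
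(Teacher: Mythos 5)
Your proposal is morally sound and arrives at the right statement, but it follows a genuinely different route from the paper, and several of the steps you flag as ``obstacles'' are in fact the heart of the matter, not secondary technical points. Let me compare the two routes and pinpoint where yours has real gaps.

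\textbf{What the paper does.} The paper works with the Hodge decomposition $ju_i = d\varphi_i + d^*\xi_i + h_i$ and proves (Propositions \ref{hodgeglob}, \ref{offsingest}) that the exact part vanishes and the coexact part stays bounded in $L^q$ for $q<p_i$, with weighted $L^2$ control away from $Sing(u_i)$. The scaled harmonic part $\bar h_i = (2-p_i)^{-1/p_i}h_i$ lives in the \emph{finite-dimensional} space $\mathcal H^1(M)$, so subsequential convergence to $\bar h$ is automatic --- no removable-singularity argument is needed. The concentrated part is then packaged as a sequence of \emph{generalized} $(n-2)$-varifolds $V_i$ with symmetric tensor $I - 2|\alpha_i|^{-2}\alpha_i\otimes\alpha_i$ ($\alpha_i := d\varphi_i + d^*\xi_i$), and the key Claim (\ref{claimstat}) shows $(2-p_i)\|S_i - S_i^s - S_i^h\|_{L^1}\to 0$, which is what allows the inner-variation identity to pass to the limit and yields $\delta V=0$. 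Rectifiability and the identification $\|V\|=\nu$ are then immediate from the Ambrosio--Soner criterion (Proposition \ref{asprop}): bounded first variation plus positive $(n-2)$-density implies rectifiability. The $2\pi$ density bound comes from Proposition \ref{sharpdensbd}, proved by strong $W^{1,p}$-compactness of blow-ups (via [NVV]), a dimension-reduction argument, and the explicit computation for $z\mapsto z^\kappa$ on $D^2$.

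\textbf{Where your route diverges, and where it has gaps.}
\begin{enumerate}
\item \emph{Rectifiability.} You propose to identify all tangent measures of $\nu$ with $(n-2)$-planes and then conclude rectifiability. This is a far harder program than what the paper does. Showing that \emph{every} tangent measure of the concentration measure is flat requires a full blow-up analysis of the sequence $u_i$ at concentration points and is essentially equivalent to proving rectifiability from scratch (Preiss-type). The paper bypasses this entirely: once you have a generalized varifold with bounded first variation and positive density, Ambrosio--Soner gives rectifiability automatically. Your proposal is missing any argument that the tangent measures are flat, and the mass bound on $T_i$ (which you also assert but do not prove, via a ball-construction annular bound) is not actually used in your rectifiability step --- it is a loose end.

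\item \emph{Stationarity.} You say that passing the inner-variation identity to the limit and ``decoupling'' the stress-energy of $\bar h$ gives a divergence-free varifold. This decoupling is precisely the content of Claim (\ref{claimstat}) in the paper, which requires careful estimates on $\|S_i - S_i^s - S_i^h\|_{L^1}$ using the volume bound of Lemma \ref{singvollem} and the weighted $L^2$ estimate of Lemma \ref{offsingest}. Without some version of this claim, the nonlinearity $|du_i|^{p_i-2}du_i^*du_i$ does \emph{not} split into a harmonic part plus a rank-one projection in the limit, and the stationarity of $V$ does not follow.

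\item \emph{The sharp density bound.} You cite an ``annular $p$-energy lower bound'' $\int_{B_\rho\setminus B_r}|du_i|^{p_i} \geq \frac{2\pi}{2-p_i}(1+o(1))$ and a ball-construction argument to get $\Theta\geq 2\pi$. This route is in the Ginzburg--Landau spirit (Jerrard/Sandier), but it is not proved, and the error term must be controlled uniformly in $p$. The paper's dimension-reduction proof of Proposition \ref{sharpdensbd} is both simpler and produces the sharp constant directly, by exploiting the strong $W^{1,p}$ compactness of blow-ups from [NVV] --- a tool specific to $p\notin\mathbb N$ with no direct Ginzburg--Landau analogue.

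\item \emph{Identification of $\bar h$.} Taking the weak $L^2$ limit of $\psi_i = (2-p_i)^{1/2}|j_i|^{(p_i-2)/2}j_i$ is a nice idea, but weak $L^2$ convergence by itself does not make the limit harmonic; you need strong convergence off $\Sigma$ plus a removable-singularity argument across $\Sigma$. Both of these are plausible but require the quantitative estimates that the paper organizes through the Hodge decomposition. The Hodge route is structurally cleaner because the harmonic forms form a finite-dimensional space and the estimates on $d\varphi_i$, $d^*\xi_i$ are handled separately.
\end{enumerate}

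\textbf{Summary.} Your overall strategy ($L^2$ rescaling, $\varepsilon$-regularity off a concentration set, density lower bound, stationarity in the limit) is conceptually aligned with the paper, but the route you propose for rectifiability is substantially harder and incomplete, and the stress-energy decoupling needed for stationarity is asserted but not addressed. The paper's two structural choices --- the Hodge decomposition of $ju_i$ and the Ambrosio--Soner generalized-varifold framework --- are not merely conveniences; they are what makes the argument close.
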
 

In the course of proving Theorem \ref{mainthm1}, we also establish the following compactness result for the maps:

\begin{thm}\label{mapconvthm} Suppose that, in addition to the hypotheses of Theorem \ref{mainthm1}, either $b_1(M)=0$ or 
$$\sup_i \|du_i\|_{L^1(M)}<\infty.$$
Then (a subsequence of) the maps $u_i$ converge weakly in $W^{1,q}(M)$ for all $q\in (1,2)$, and strongly in $W^{1,2}_{loc}(M\setminus spt(V))$, to a limiting map $v\in \bigcap_{q\in [1,2)}W^{1,q}(M,S^1)$ that is harmonic away from $spt(V)$.
\end{thm}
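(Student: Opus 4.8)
The plan is to extract the weak $W^{1,q}$ limit from a uniform $W^{1,q}$ bound, and then upgrade to strong $W^{1,2}_{loc}$ convergence away from $\mathrm{spt}(V)$ by exploiting the absence of energy concentration there. First I would establish a uniform bound $\sup_i\|du_i\|_{L^q(M)}<\infty$ for every fixed $q\in(1,2)$: writing $q=p_i\cdot\frac{q}{p_i}$ and applying H\"older with the energy hypothesis $(2-p_i)\int_M|du_i|^{p_i}\le C$ gives $\int_M|du_i|^q\le \big(\int_M|du_i|^{p_i}\big)^{q/p_i}|M|^{1-q/p_i}\le (C/(2-p_i))^{q/p_i}|M|^{1-q/p_i}$, and since $q/p_i\to q/2<1$ the factor $(2-p_i)^{-q/p_i}$ stays bounded as $p_i\to2$ (for $i$ large, and the finitely many small $i$ contribute harmlessly). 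Under the hypothesis $\sup_i\|du_i\|_{L^1}<\infty$ this $L^q$ bound is automatic by interpolation; under $b_1(M)=0$, the Hodge/Sobolev decomposition writes $u_i^{-1}du_i=d\psi_i$ with $\psi_i$ controlled, so again the $W^{1,q}$ norms are bounded (this is the one place the two alternative hypotheses enter, and I would treat them in parallel). Since $|u_i|\equiv1$, the $u_i$ are bounded in $L^\infty$, hence bounded in $W^{1,q}(M,\mathbb{R}^2)$; by reflexivity and Rellich, a subsequence converges weakly in $W^{1,q}$ and strongly in $L^q$ to some $v$ with $|v|=1$ a.e., so $v\in W^{1,q}(M,S^1)$ for all $q<2$.

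Next I would promote convergence to $W^{1,2}_{loc}(M\setminus\mathrm{spt}(V))$. Fix a point $x_0\notin\mathrm{spt}(V)$; by definition of $\mathrm{spt}(V)$ and the weak-$*$ convergence $\mu_i\to\mu=\|V\|+|\bar h|^2dv_g$ from Theorem 1.1, there is a ball $B=B_r(x_0)$ on which $\limsup_i(2-p_i)\int_B|du_i|^{p_i}$ is as small as we like — in particular smaller than the $\varepsilon$-regularity threshold for stationary $p$-harmonic maps (the $p$-harmonic analogue of the BBH-type small-energy regularity, which I expect is among the preliminary estimates of the paper, or follows from standard $p$-harmonic regularity since the energy density is then a priori bounded on a slightly smaller ball). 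This forces $\mathrm{Sing}(u_i)\cap B'=\varnothing$ for a smaller ball $B'$ and $i$ large, and gives uniform $C^{1,\alpha}_{loc}$ (or at least uniform Lipschitz and $W^{2,2}$) bounds on $u_i$ in $B'$ independent of $p_i$; hence $u_i\to v$ in $C^1_{loc}(B')$, the limit $v$ is a genuine harmonic $S^1$-valued map there, and $du_i\to dv$ strongly in $L^2_{loc}$. Covering $M\setminus\mathrm{spt}(V)$ by such balls yields strong $W^{1,2}_{loc}(M\setminus\mathrm{spt}(V))$ convergence and harmonicity of $v$ off $\mathrm{spt}(V)$; a diagonal argument over an exhaustion identifies the limit with the $v$ from the previous step.

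The main obstacle I anticipate is the $\varepsilon$-regularity step: one needs that \emph{small scaled $p$-energy on a ball forces interior regularity with $p$-independent estimates}, so that the bounds survive the limit $p\uparrow2$. For maps into $S^1$ this should be cleaner than in the Ginzburg-Landau case — writing $u_i=e^{i\phi_i}$ locally (possible once the singular set misses the ball, or passing to a cover), $\phi_i$ satisfies a uniformly elliptic $p$-Laplace-type equation, and small energy plus the monotonicity/stationarity identity controls the $L^\infty$ norm of $|du_i|$ via the known $p$-harmonic regularity theory, with constants stable as $p\to2$. The remaining subtlety is the global passage from weak $W^{1,q}$ to a single limit map lying in $\bigcap_{q<2}W^{1,q}$: this follows because the $L^q$ bound on $du_i$ is uniform in $i$ for each fixed $q$, so the weak limit $v$ inherits $\|dv\|_{L^q}\le\liminf\|du_i\|_{L^q}<\infty$ for every $q<2$, and the subsequence can be chosen once and for all by diagonalizing over a sequence $q_k\uparrow2$.
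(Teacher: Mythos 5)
Your first step---establishing $\sup_i\|du_i\|_{L^q}<\infty$ for fixed $q\in(1,2)$ by H\"older or interpolation---contains the main error, and it is fatal. In the H\"older bound $\|du_i\|^q_{L^q}\leq(C/(2-p_i))^{q/p_i}|M|^{1-q/p_i}$, the factor $(2-p_i)^{-q/p_i}$ does \emph{not} stay bounded: it is a quantity tending to $0$ raised to a negative power $-q/p_i\to -q/2$, so it diverges as $p_i\to 2$; the observation $q/p_i<1$ is irrelevant. Interpolating $L^q$ between $L^1$ and $L^{p_i}$ has the same problem: $\|du_i\|_{L^q}\leq\|du_i\|_{L^1}^{1-\theta}\|du_i\|_{L^{p_i}}^\theta$ with $\theta/p_i\to (q-1)/q>0$, so the factor $\|du_i\|_{L^{p_i}}^\theta\sim(2-p_i)^{-\theta/p_i}$ still blows up. Likewise, the claim that $b_1(M)=0$ gives a global lift $u_i^{-1}du_i=d\psi_i$ is wrong: that would require $T(u_i)=0$, but nontrivial distributional Jacobians $T(u_i)$ are exactly what produce $\mathrm{Sing}(u_i)$ and the limit varifold $V$. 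Vanishing $b_1(M)$ only kills the \emph{harmonic} component of the Hodge decomposition of $ju_i$, not the coexact component $d^*\xi_i$ that carries the singularity.

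The paper's proof instead rests on the Hodge decomposition $ju_i=d\varphi_i+d^*\xi_i+h_i$ and the quantitative estimates of Proposition \ref{hodgeglob} and Lemma \ref{offsingest}: $d^*\xi_i$ is uniformly bounded in $W^{1,q}$ and $d\varphi_i\to 0$ in $W^{1,q}$, and these rest on the $W^{-1,q}$ bounds for $T(u_i)$ (Corollary \ref{tuwbds}), which in turn require the sharp density bound (Proposition \ref{sharpdensbd}) and the volume estimate on $\mathcal{N}_r(\mathrm{Sing}(u_i))$ (Lemma \ref{singvollem}). That machinery is the analytic core of the theorem, and your proposal skips it entirely. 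The two alternative hypotheses enter only to control the remaining harmonic piece $h_i$ (either $h_i=0$, or $\|h_i\|_{L^\infty}\leq C\|du_i\|_{L^1}$). For the second half, your $\epsilon$-regularity sketch for strong $W^{1,2}_{loc}$ convergence is in the right spirit---once $\mathrm{Sing}(u_i)$ misses a ball, Corollary \ref{wkpharmreg} applies---but note that the $L^\infty$ gradient bound it gives scales like $(2-p_i)^{-1/p_i}$, so ``uniform Lipschitz bounds'' do not hold; what one actually needs is the uniform $L^2$ bound on $du_i$ away from $\mathcal{N}_\delta(\Sigma)$ from Lemma \ref{offsingest} (plus the $h_i$ bound), which feeds into the $W^{2,p_i}$ estimate and Rellich compactness of $W^{2,p}\hookrightarrow W^{1,2}$ once $p_i>2n/(n+2)$.
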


\hspace{3mm} Next, we show that the concentrated measure $\|V\|$ is quantized in the two-dimensional setting, with a proof that's somewhat simpler than its analog \cite{CM} in the Ginzburg-Landau setting:

\begin{thm}\label{2dintthm} In the situation of Theorem \ref{mainthm1}, if $dim(M)=2$, then the density of the concentration varifold $V$ has the form
$$\|V\|=\Sigma_{x\in spt(V)}2\pi m_x\delta_x$$
for some $m_x\in \mathbb{N}$.
\end{thm}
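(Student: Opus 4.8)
The plan is to localize near each point of $spt(V)$, identify the concentrated energy there with the winding numbers of the maps $u_i$ on small circles, and argue that it must be an integer multiple of $2\pi$. Since $\Theta_0(\|V\|,\cdot)\ge 2\pi$ on $spt(V)$ and $\|V\|(M)\le\mu(M)<\infty$, the support $spt(V)$ is finite, and as a rectifiable $0$-varifold is atomic we have $\|V\|=\sum_{x\in spt(V)}\Theta_0(\|V\|,x)\,\delta_x$; so it suffices to prove $\Theta_0(\|V\|,x_0)\in 2\pi\mathbb{N}$ for a fixed $x_0\in spt(V)$. Fix $r_0>0$ with $\overline{B_{r_0}(x_0)}\cap spt(V)=\{x_0\}$; for $r<r_0$ with $\mu(\partial B_r(x_0))=0$, weak-$*$ convergence of the $\mu_i$ gives
\[
\Theta_0(\|V\|,x_0)=\lim_{r\to 0}\ \lim_{i\to\infty}(2-p_i)\int_{B_r(x_0)}|du_i|^{p_i}\,dv_g,
\]
the diffuse contribution $\int_{B_r(x_0)}|\bar h|^2\,dv_g$ being negligible as $r\to 0$.

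By Theorem~\ref{mainthm1}, $Sing(u_i)\to spt(V)$, so for $i$ large all singularities of $u_i$ in $B_{r_0}(x_0)$ lie in a ball $B_{\epsilon_i}(x_0)$ with $\epsilon_i\to 0$; on the annulus $B_r(x_0)\setminus B_{\epsilon_i}(x_0)$ the map $u_i$ is smooth and $S^1$-valued, with a well-defined winding number $d_i=\deg\bigl(u_i|_{\partial B_s(x_0)}\bigr)$ independent of $s\in(\epsilon_i,r)$. Slicing in the radial variable and applying Jensen's inequality on each circle yields
\[
(2-p_i)\int_{B_r(x_0)\setminus B_{\epsilon_i}(x_0)}|du_i|^{p_i}\ \ge\ 2\pi|d_i|^{p_i}\bigl(r^{2-p_i}-\epsilon_i^{2-p_i}\bigr),
\]
while the exact classification of $0$-homogeneous $p$-harmonic maps $\R^2\setminus\{0\}\to S^1$ — which are precisely the maps $e^{\mathrm{i}k\theta}$, of density $2\pi|k|^{p_i}\ge 2\pi$ — together with the monotonicity formula bounds below the contribution of each singular point inside $B_{\epsilon_i}(x_0)$. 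Letting $i\to\infty$ and then $r\to 0$ recovers, and sharpens to a statement about integers, the bound $\Theta_0(\|V\|,x_0)\ge 2\pi$ of Theorem~\ref{mainthm1}.

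For the matching upper bound I would exploit a feature special to $S^1$-targets: on any two-dimensional annulus avoiding $Sing(u_i)$, a $p$-harmonic map to $S^1$ is energy-minimizing within its homotopy class rel boundary, because writing a competitor as $u_i e^{\mathrm{i}\varphi}$ with $\varphi$ single-valued reduces the energy to $\int|j(u_i)+d\varphi|^{p_i}$, a strictly convex functional of $\varphi$. Combined with the monotonicity formula from stationarity — applied to $E_i(s):=(2-p_i)\,s^{p_i-2}\int_{B_s(x_0)}|du_i|^{p_i}$, nondecreasing in $s$ with equality in the differential inequality only for $0$-homogeneous maps — this lets one evaluate the energy on dyadic annuli and reduces the whole problem to a \emph{quantization of the vortex scale}: writing $\delta_i$ for the diameter of the singular cluster $Sing(u_i)\cap B_{r_0}(x_0)$ that collapses to $x_0$, one must rule out $(2-p_i)\log(1/\delta_i)\to c\in(0,\infty)$, since such an intermediate rate would force $\delta_i^{\,2-p_i}$, and hence the density, to a fractional value. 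This is exactly where stationarity is indispensable, and where the $p$-harmonic problem is cleaner than Ginzburg--Landau: the Pohozaev identity underlying the monotonicity formula, evaluated at the cluster scale, excludes the intermediate regime — or, if a nontrivial bubble persists there, passes to a finer scale, the procedure terminating after finitely many steps since the total energy is finite. I expect this smallest-scale analysis to be the main obstacle. Once it is in place, $\Theta_0(\|V\|,x_0)$ is exhibited as $2\pi\lim_{i\to\infty}a_i$ for a sequence $a_i\in\mathbb{N}$ assembled from the winding numbers of $u_i$ at the scales down to $\delta_i$; a convergent sequence of natural numbers is eventually constant, so $\Theta_0(\|V\|,x_0)\in 2\pi\mathbb{N}$, and summing over the finitely many points of $spt(V)$ gives $\|V\|=\sum_{x\in spt(V)}2\pi m_x\,\delta_x$ with $m_x\in\mathbb{N}$.
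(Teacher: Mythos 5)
Your overall scaffolding is sound and overlaps with the paper's in several places: the reduction to a single atom of $\|V\|$, the observation that the degree of $u_i$ on circles avoiding $Sing(u_i)$ governs the annular energy, and the recognition that a Pohozaev-type identity coming from stationarity is the tool that converts the density at a point into a quantity computable from the limiting map. The Jensen/Hölder lower bound $(2-p_i)\int_{B_r\setminus B_{\epsilon_i}}|du_i|^{p_i}\geq 2\pi|d_i|^{p_i}(r^{2-p_i}-\epsilon_i^{2-p_i})$ is correct, and since $|d_i|^{p_i}\to d^2$ once the integer degree stabilizes, it is even consistent with the paper's Proposition~\ref{l1bdcase}, which identifies the atom mass as $2\pi\deg(v,a)^2$.

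The genuine gap is in the upper bound, which you yourself flag as the main obstacle. You propose to rule out the regime $(2-p_i)\log(1/\delta_i)\to c\in(0,\infty)$ by invoking the Pohozaev identity at the cluster scale, with a bubbling-style induction as a fallback; but this is not an argument, and it is not how the mechanism actually works. Two things are missing. First, you need a reason the energy \emph{outside} the cluster scale dies in the limit. This does not follow from stationarity or Pohozaev alone. It comes from the sharp density lower bound of Proposition~\ref{sharpdensbd}, which gives the tube-volume estimate $\mathrm{Vol}(\mathcal{N}_r(Sing(u_i)))\lesssim r^{p_i}$ of Lemma~\ref{singvollem}, hence the $W^{-1,q}$ bound on $T(u_i)$ (Corollary~\ref{tuwbds}), hence the pointwise gradient bound $|\nabla\xi_i(x)|\lesssim\mathrm{dist}(x,Sing(u_i))^{-1}$ (Lemma~\ref{wgradestlem}) for the co-exact part of the local Hodge decomposition of $ju_i$; integrating over dyadic annuli then gives $(2-p_i)\int_{D_{1/2}\setminus\mathcal{N}_{(2-p_i)^{1/p_i}}(Sing(u_i))}|du_i|^{p_i}\to 0$ once the exact and harmonic parts are handled separately. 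The scale $(2-p_i)^{1/p_i}$ is not excluded as "intermediate"; rather, it is shown to be the scale at which all the mass lives, irrespective of how $Sing(u_i)$ clusters. Second, to run a Pohozaev computation at the vortex scale you need the rescaled maps to converge, and for that you need uniform $W^{1,1}$ bounds on balls of radius $\sim(2-p_i)^{1/p_i}$ around the singular cluster. Establishing these (equation (\ref{wantl1bd}) in the paper, using the same Hodge estimates) is precisely the second nontrivial input, and your proposal does not supply it; the observation that $p$-harmonic maps to $S^1$ minimize in their local homotopy class, while true, does not by itself yield this compactness. Without these two ingredients, the assertion that "a convergent sequence of natural numbers is eventually constant" is not reached, because you have not produced the natural numbers.
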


As in the Ginzburg-Landau setting, the question of the integrality of $\frac{1}{2\pi}V$ remains open in higher dimensions, but we suspect that the answer will be affirmative.

\hspace{3mm} Finally, we employ min-max arguments similar to those in \cite{Ste1}, together with results of Wang on generalized Ginzburg-Landau functionals \cite{Wang}, to demonstrate the existence nontrivial families satisfying the hypotheses of Theorem \ref{mainthm1} on any compact manifold:

\begin{thm}\label{existthm} On every compact Riemannian manifold $M^n$ of dimension $n\geq 2$, there exists a family $(1,2)\ni p\mapsto u_p\in W^{1,p}(M,S^1)$ of stationary $p$-harmonic maps to $S^1$ for which
\begin{equation}
0<\liminf_{p\to 2}(2-p)E_p(u)\leq \limsup_{p\to 2}(2-p)E_p(u)<\infty.
\end{equation}
\end{thm}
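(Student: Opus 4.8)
The plan is to obtain $u_p$, for $p$ in a left-neighbourhood of $2$, as an $\epsilon\to0$ limit of min-max critical points of the generalized (i.e.\ $W^{1,p}$-) Ginzburg--Landau functionals
\[
F_{\epsilon,p}(u)=\int_M\Bigl(\tfrac1p|du|^p+\tfrac1{4\epsilon^2}(1-|u|^2)^2\Bigr)\,dv_g,\qquad u\in W^{1,p}(M,\mathbb{R}^2),
\]
running for each $\epsilon>0$ a min-max of the kind introduced in \cite{Ste1} and then invoking Wang's compactness theory for such functionals \cite{Wang}. Since the conclusion concerns only the limit $p\uparrow2$, it suffices to treat $p$ near $2$: there $W^{1,p}(M)$ embeds compactly in $L^4$, the truncation $u\mapsto u/\max\{1,|u|\}$ (which does not increase $F_{\epsilon,p}$) reduces attention to maps with $|u|\le1$, and $F_{\epsilon,p}$ is then a $C^1$ functional on $W^{1,p}(M,\mathbb{R}^2)$ satisfying the Palais--Smale condition; for the remaining $p$ one may take $u_p$ constant.

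\emph{The min-max and the upper bound.} Let $\gamma_0\colon S^1\to W^{1,p}(M,\mathbb{R}^2)$ be the loop $\theta\mapsto(x\mapsto e^{i\theta})$ of unit constant maps, on which $F_{\epsilon,p}$ vanishes identically. Following \cite{Ste1} I would run the min-max over a class of sweepouts capping off $\gamma_0$ --- concretely, over $\mathcal{A}=\{F\in C^0(\overline{B^2},W^{1,p}(M,\mathbb{R}^2)):F|_{\partial B^2}=\gamma_0\}$, with width $c_{\epsilon,p}=\inf_{F\in\mathcal{A}}\sup_{z\in\overline{B^2}}F_{\epsilon,p}(F(z))$; the class $\mathcal{A}$ is nonempty. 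The Palais--Smale condition and standard min-max theory then produce, for each such $\epsilon$ and $p$, a critical point $u_{\epsilon,p}$ of $F_{\epsilon,p}$ with $F_{\epsilon,p}(u_{\epsilon,p})=c_{\epsilon,p}$. For the upper bound I would exhibit an explicit $F\in\mathcal{A}$ whose slices are modulus-one maps carrying a single $\pm1$ vortex dipole of core scale $\epsilon$, opening up from a constant as $z$ moves inward from $\partial B^2$; since the model vortex $x\mapsto x'/|x'|$ (with $x'=(x_1,x_2)$) carries $p$-energy $\sim\frac{2\pi}{2-p}$ per unit $(n-2)$-measure of its core and core potential energy $O(1)$, this yields $c_{\epsilon,p}\le C/(2-p)$ uniformly in $\epsilon$ and in $p$ near $2$.

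\emph{The lower bound --- the crux.} The heart of the matter is the matching bound $c_{\epsilon,p}\ge c/(2-p)$ with $c>0$ independent of $\epsilon$ and of $p$ near $2$, and I expect this to be the principal obstacle. The mechanism is that $\gamma_0$ winds once around the circle of unit constants and so cannot be capped off without passing through a map that either has values near $0$ (costing potential energy of order $\epsilon^{-2}$, hence irrelevant for the infimum) or else carries a genuine vortex. The topological core of the argument is that for every $x\in M$ the slice-value map $z\mapsto F(z)(x)\in\mathbb{R}^2$ has winding number one about $0$ on $\partial B^2$, so that the zero set of $(x,z)\mapsto F(z)(x)$ in $M\times\overline{B^2}$ projects onto $M$ with degree one; a linking/coarea argument as in \cite{Ste1} then forces some slice $F(z_0)$ to carry a degree-$\pm1$ vortex concentrated near an $(n-2)$-set of size bounded below. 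Combined with the elementary estimate, from Hölder's inequality on concentric $(n-1)$-spheres about the core, that a genuine vortex carries $p$-energy at least $\frac{2\pi}{2-p}(1-o(1))$ per unit $(n-2)$-measure of its core, with $\eta$-compactness (ruling out vortices on too small a scale for a near-critical configuration), and with the smallness of the potential term, this gives $F_{\epsilon,p}(F(z_0))\ge c/(2-p)$ as $\epsilon\to0$, $p\uparrow2$. The delicate points are making the pointwise evaluation of weakly convergent $W^{1,p}$ maps rigorous --- routing it through the elliptically regular critical maps or a measure-theoretic surrogate --- and tracking the constants so that the bound genuinely scales like $1/(2-p)$ rather than merely $O(1)$.

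\emph{Passing to the limit.} Finally I would send $\epsilon\to0$ with $p$ fixed. By Wang's results \cite{Wang}, a subsequence of $\{u_{\epsilon,p}\}_\epsilon$ converges to a stationary $p$-harmonic map $u_p\in W^{1,p}(M,S^1)$ (the potential term is at most $4\epsilon^2 c_{\epsilon,p}\to0$, forcing $|u_p|=1$ a.e.), and no $p$-energy is lost in the limit: any nontrivial defect would arise from a nonconstant entire solution of the $p$-Ginzburg--Landau equation on $\mathbb{R}^n$, which cannot have finite $p$-energy for $p<2$ since $\int_{r>1}r^{1-p}\,dr=\infty$. Hence $E_p(u_p)=p\lim_{\epsilon\to0}\bigl(F_{\epsilon,p}(u_{\epsilon,p})-(\text{potential})\bigr)=p\lim_{\epsilon\to0}c_{\epsilon,p}$, so the two bounds give $(2-p)E_p(u_p)\in[\,pc(1-o(1)),\,pC\,]$; as $p\in(1,2)$ this is bounded away from $0$ and $\infty$ uniformly as $p\uparrow2$, which is the assertion. (With more care the lower bound is of order $\tfrac{2\pi}{2-p}$, consistent with the density bound of Theorem~\ref{mainthm1}.)
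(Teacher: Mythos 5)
Your construction of the critical points is essentially the paper's: the same class $\Gamma_p(M)$ of disk-type sweepouts capping off the loop of unit constants, the saddle-point theorem via the Palais--Smale property of the $W^{1,p}$ Ginzburg--Landau energies, and Wang's strong $W^{1,p}$ compactness as $\epsilon\to 0$ to produce a stationary $p$-harmonic map of energy $c_p(M)$. Your upper bound $c_p(M)\le C(M)/(2-p)$ is also in the same spirit as the paper's, which writes down an explicit family $h_y=v_y\circ f$ with $f$ a projected triangulation of $M$ and $v_y$ the standard codimension-two vortices parameterized by $y\in D^2$.

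The genuine gap is in the lower bound, and it is the crux of the theorem. You propose a linking/coarea argument: for each $x\in M$ the map $z\mapsto F(z)(x)$ has winding number one, so some slice $F(z_0)$ must carry a vortex, and then a per-vortex energy estimate plus $\eta$-compactness give $c_{\epsilon,p}\ge c/(2-p)$. As you yourself flag, this runs into a real obstruction: for $F(z)\in W^{1,p}(M,\mathbb{R}^2)$ with $p<2\le n$ there is no well-defined pointwise evaluation $F(z)(x)$, so the ``zero set of $(x,z)\mapsto F(z)(x)$'' and its degree are not defined without substantial additional work (and the near-critical slices whose regularity you would like to exploit are only produced \emph{after} one already has a lower bound for the width). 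Making this rigorous, and establishing the needed $\eta$-compactness for the generalized functionals at the right scale, would be a significant undertaking that you leave unaddressed. It is also worth noting that this is not in fact the argument of \cite{Ste1}: the present paper says explicitly that it argues ``as in Section 3 of \cite{Ste1},'' and the argument it then gives is a comparison, not a linking. Concretely, the paper first proves $\liminf_{p\to 2}(2-p)c_p(S^n)>0$ by a soft topological argument: since $b_1(S^n)=0$, a nontrivial weakly $p$-harmonic map $S^n\to S^1$ would lift to a nonconstant $p$-harmonic \emph{function} on a closed manifold if it were smooth, hence it must have a singularity, and then Proposition~\ref{sharpdensbd} together with the monotonicity formula gives the $2\pi/(2-p)$ energy lower bound directly. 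It then transfers this to arbitrary $M$ by constructing a bounded linear comparison map $\Phi=R\circ P_f:W^{1,p}(M,\mathbb{R}^2)\to W^{1,p}(S^n,\mathbb{R}^2)$ (pullback under an embedded hemisphere followed by reflection), which fixes constants and satisfies $E_{p,\epsilon}(\Phi u)\le C(M)E_{p,\epsilon}(u)$, so that $c_{p,\epsilon}(S^n)\le C(M)c_{p,\epsilon}(M)$. This route uses only machinery already developed in Section~\ref{bigests} and avoids every one of the difficulties you identify; if you intend to pursue the linking strategy instead, you must at minimum replace the pointwise evaluation by an integral or slicing surrogate and supply the missing $\eta$-compactness input.
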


\subsection{Outline of the Paper:} In Section \ref{sec2}, we review important facts about the structure of maps in $W^{1,p}(M,S^1)$, $p$-harmonic functions and weakly $p$-harmonic maps to $S^1$, and stationary $p$-harmonic maps. 

In Section \ref{bigests}, we record a sharp lower bound for the $p$-energy density of a stationary $p$-harmonic map $u\in W^{1,p}(M,S^1)$ on its singular set $Sing(u)$--a simpler and sharper analog of the $\eta$-ellipticity result (see \cite{BBO},\cite{LR}) for solutions of the Ginzburg-Landau equations. We then use this to obtain $p$-independent estimates for the $(n-2)$-current $T(u)$ encoding the topological singularities of $u$, in the dual Sobolev norms $W^{-1,q}=(W^{1,q'})^*$ for $q\in (1,p)$. 

In Section \ref{hodgeests}, we employ the results of the preceding sections to estimate separately the components of the Hodge decomposition of the one-form $ju=u^*(d\theta)$, first globally in $L^q$ for $q<p$, then in stronger norms away from $Sing(u)$. 

In Section \ref{sec5}, we use these estimates, together with some standard techniques from the study of energy concentration phenomena, to complete the proofs of Theorems \ref{mainthm1} and \ref{mapconvthm}. 

In Section \ref{sec6}, we prove Theorem \ref{2dintthm}, first under some compactness assumptions, using Theorem \ref{mapconvthm} and a Pohozaev-type identity, and then for the general case, by showing that the compactness assumptions hold at scales outside of which the normalized energy measures vanish. 

In Section \ref{sec7}, we employ min-max arguments like those in \cite{Ste1} together with Wang's results for generalized Ginzburg-Landau functionals \cite{Wang} to prove Theorem \ref{existthm}. We also include a short appendix, containing the proofs of some estimates which are of use to us, but do not play a central role in the paper.

\section*{Acknowledgements} I would like to thank my advisor Fernando Cod\'{a} Marques for his support and encouragement, and Yu Wang for his interest in this work. The author is partially supported by NSF grants DMS-1502424 and DMS-1509027.

\section{Preliminaries: The Structure of $W^{1,p}(M,S^1)$ and Circle-Valued $p$-Harmonic Maps}\label{sec2}

\subsection{Topological Singularities and Lifting in $W^{1,p}(M,S^1)$} 

\hfill

\hspace{3mm} Let $M^n$ be a compact, oriented Riemannian manifold, and consider the space $W^{1,p}(M,S^1)$ of circle-valued Sobolev maps, realized as the collection of complex-valued maps $u\in W^{1,p}(M,\mathbb{C})$ satisfying $|u|=1$ almost everywhere in $M$. For each $u\in W^{1,p}(M,S^1)$, we denote by $ju$ the one-form
\begin{equation}
ju:=u^*(d\theta)=u^1du^2-u^2du^1.
\end{equation}
Observe that $|du|=|ju|$ almost everywhere on $M$, so that $ju$ belongs to $L^p$. When $u$ is smooth, the form $ju$ is obviously closed, and it is a straightforward consequence of the Poincar\'{e} Lemma that $u$ has a local lifting of the form $u=e^{i\varphi}$ for some smooth, real-valued $\varphi$.  

\hspace{3mm} For general $u\in W^{1,p}(M,S^1)$, the exterior derivative $d[ju]$ is no longer well-defined pointwise, but since $ju$ belongs to $L^p$, we can still make sense of $d[ju]$ as a distribution in $W^{-1,p}$. Namely, one defines the \emph{distributional Jacobian} $T(u)$ of $u$ to be the $(n-2)$-current acting on smooth $(n-2)$-forms $\zeta\in \Omega^{n-2}(M)$ by
\begin{equation}
\langle T(u),\zeta\rangle:=\int_M ju\wedge d\zeta.
\end{equation}

\hspace{3mm} The analytic and measure-theoretic properties of distributional Jacobians for $S^1$-valued maps (and their analog for sphere-valued maps more generally) have been studied by a number of authors; we make no attempt to survey the many contributions here, but refer the reader to the papers \cite{ABO}, \cite{JS2}, \cite{Mir}, and the references therein for a sample. Note that for smooth, complex-valued maps, we have the pointwise relation
$$d(u^1du^2-u^2du^1)=2du^1\wedge du^2,$$
and since $u\mapsto du^1\wedge du^2$ defines a continuous map from $W^{1,2}(M,\mathbb{C})$ to the space of $L^1$ two-forms, it follows that
$$T(u)=2du^1\wedge du^2$$
holds for all $u\in W^{1,2}(M,S^1)$. In particular, since $rank(du)\leq 1$ almost everywhere, one deduces that $T(u)=0$ for all $u\in W^{1,2}(M,S^1)$. On the other hand, for $p\in [1,2)$, and $k\in \mathbb{Z}$, the maps $v_k: D_1^2\to S^1$ given by
$$v_k(z):=(z/|z|)^k$$
evidently lie in $W^{1,p}(D,S^1)$, with nontrivial distributional Jacobian
$$T(v_k)=2\pi k\cdot \delta_0.$$

\hspace{3mm} Observe now that if $u$ has the form $u=e^{i\varphi}$ for some real-valued $\varphi \in W^{1,p}$, then $T(u)$ is given by 
$$\langle T(u),\zeta\rangle=\int_M d\varphi\wedge d\zeta,$$
and since $\varphi$ can be approximated in $W^{1,p}(M,\mathbb{R})$ by smooth functions, it follows that $T(u)=0$. The following result of Demengel provides a useful converse--if the topological singularity vanishes, then $u$ lifts locally to a real-valued function in the same Sobolev space:

\begin{prop}\label{liftcrit}\emph{(\cite{Dem})} If $u\in W^{1,p}(B^n,S^1)$ and $T(u)=0$ in the ball $B^n$, then $u=e^{i\varphi}$ on $B^n$ for some $\varphi \in W^{1,p}(B^n,\mathbb{R})$.
\end{prop}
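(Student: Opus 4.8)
The plan is to show that the hypothesis $T(u)=0$ forces the $L^p$ one-form $ju$ to be exact, and then to recover the lifting $\varphi$ as a $W^{1,p}$ primitive of $ju$.

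\emph{Step 1: exactness of $ju$.} Unwinding the definition, $\langle T(u),\zeta\rangle=\int_{B^n}ju\wedge d\zeta$ vanishes for all smooth compactly supported $(n-2)$-forms $\zeta$ on $B^n$, which says precisely that the $L^p$ one-form $ju=u^1\,du^2-u^2\,du^1$ is closed in the sense of distributions. Since $B^n$ is contractible, I would invoke the Poincar\'{e} lemma for $L^p$ forms on a star-shaped domain: mollifying $ju$ and applying the smooth cone-homotopy operator (or, more cleanly, a bounded homotopy operator of Bogovski\u{\i} type, or $L^p$-Hodge theory on the ball) produces $\psi\in W^{1,p}(B^n,\R)$ with $d\psi=ju$. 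The content of this step is that the primitive genuinely lies in $W^{1,p}$, and not merely in a weaker space.

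\emph{Step 2: the lifting.} Next I would use the pointwise algebraic identity, valid a.e. for $u\in W^{1,p}(B^n,S^1)$: with $u=u^1+iu^2$ and $|u|\equiv 1$,
\[
\bar u\,du=(u^1\,du^1+u^2\,du^2)+i(u^1\,du^2-u^2\,du^1)=\tfrac12\,d|u|^2+i\,ju=i\,ju=i\,d\psi,
\]
so that, multiplying through by $u\in L^\infty$ and using $u\bar u=1$ a.e., one gets $du=i\,u\,d\psi$ a.e. Set $w:=u\,e^{-i\psi}$. Since $\psi\in W^{1,p}$ and $z\mapsto e^{-iz}$ is Lipschitz, $e^{-i\psi}\in W^{1,p}(B^n)\cap L^\infty$ with $d(e^{-i\psi})=-i\,e^{-i\psi}\,d\psi$; as $u\in W^{1,p}\cap L^\infty$ as well, the product rule for bounded $W^{1,p}$ maps gives $dw=e^{-i\psi}\,du+u\,d(e^{-i\psi})=e^{-i\psi}(du-i\,u\,d\psi)=0$ a.e. Since $B^n$ is connected, $w$ equals a constant $c$ a.e.; and $|c|=|u|\,|e^{-i\psi}|=1$, so $c=e^{ic_0}$ for some $c_0\in\R$, whence $u=e^{i(\psi+c_0)}$ a.e. Taking $\varphi:=\psi+c_0\in W^{1,p}(B^n,\R)$ completes the argument.

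The only non-formal ingredient, and the step I would flag as the real content, is the $L^p$ Poincar\'{e} lemma in Step 1: everything else is elementary Sobolev calculus (the product and chain rules for bounded $W^{1,p}$ functions, and the fact that a $W^{1,p}$ function with vanishing distributional gradient on a connected open set is a.e. constant), whereas producing a $W^{1,p}$ primitive of a merely $L^p$ closed form uses both the simple connectivity---indeed contractibility---of the ball and a genuine, if classical, regularity estimate for the homotopy operator. One subtlety to note in passing: one cannot usefully mollify $u$ itself, since $u$ need not lie in $W^{1,2}$; the mollification must be performed at the level of the closed form $ju$.
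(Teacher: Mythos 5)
Your argument is correct, and it is essentially the standard proof of this lifting criterion (the paper cites Demengel without giving a proof, and Demengel's argument runs along the same two lines you identify: closedness of $ju$ plus an $L^p$ Poincar\'{e} lemma to produce a $W^{1,p}$ primitive, followed by the elementary observation that $u e^{-i\psi}$ has vanishing weak gradient and modulus one, hence is a unimodular constant). You correctly flag the one step with real analytic content---the existence of a $W^{1,p}$ primitive for a closed $L^p$ one-form on a ball---and the routine Sobolev calculus in Step 2 (chain rule for Lipschitz $\circ$ $W^{1,p}$, Leibniz rule for bounded $W^{1,p}$ functions, a.e. constancy from vanishing weak gradient on a connected domain) is all applied correctly, including the use of $|u|^2\equiv 1$ to kill $u^1du^1+u^2du^2$ almost everywhere.
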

The significance of the lifting result for variational problems on $W^{1,p}(M,S^1)$ is clear: away from the support of the $(n-2)$-current $T(u)$, an $S^1$-valued solution $u$ of some geometric p.d.e. lifts locally to a function $\varphi$ solving an associated \emph{scalar} problem, for which a stronger regularity theory is often available.

\subsection{Weakly $p$-Harmonic Maps to $S^1$}

\hfill

\hspace{3mm} A map $u\in W^{1,p}(M,S^1)$ for $p\in (1,\infty)$ is called \emph{weakly $p$-harmonic} if it satisfies 
\begin{equation}\label{wkpharm1}
\int |du|^{p-2}\langle du,dv\rangle=\int |du|^p\langle u,v\rangle
\end{equation}
for all $v\in (W^{1,p}\cap L^{\infty})(M,\mathbb{R}^2)$. Writing $v=\varphi u+i\psi u$ in (\ref{wkpharm1}), it's easy to see that (\ref{wkpharm1}) holds if and only if
\begin{equation}\label{wkpharm2}
\int |du|^{p-2}\langle ju,d\psi\rangle=0
\end{equation}
for all $\psi \in W^{1,p}(M,\mathbb{R})$--i.e., when $ju$ satisfies
\begin{equation}\label{weakpharm3}
div(|ju|^{p-2}ju)=0
\end{equation}
distributionally on $M$. From (\ref{wkpharm2}), it is not hard to see that $u$ is weakly $p$-harmonic precisely when $u$ minimizes the $p$-energy $E_p(u)=\int_M|du|^p$ among all competitors of the form $e^{i\varphi}u$.

\hspace{3mm} In view of (\ref{weakpharm3}), wherever $u$ admits a local lifting $u=e^{i\varphi}$ for some real-valued $\varphi \in W^{1,p}$, we see that $u$ is weakly $p$-harmonic if and only if $\varphi$ is a $p$-harmonic function--i.e., a weak solution of 
$$div(|d\varphi|^{p-2}d\varphi)=0.$$
For $p\in (1,2)$, the $C^{1,\alpha}$ regularity of $p$-harmonic functions was established by DiBenedetto \cite{DiB} and Lewis \cite{Lew}. It is, moreover, possible to check that the H\"{o}lder exponent $\alpha$ and other relevant constants in the central estimates of \cite{DiB} and \cite{Lew} can be taken independent of $p$ for $p$ bounded away from $1$ and $\infty$. Rather than using the full strength of the $C^{1,\alpha}$ regularity, we will employ in this paper the following simpler estimates, whose proof we sketch in the appendix:

\begin{prop}\label{pharmfcnreg} Let $B_{2r}(x)$ be a geodesic ball in some manifold $M^n$ with $|sec(M)|\leq K$. If $\varphi\in W^{1,p}(B_{2r}(x),\mathbb{R})$ is a $p$-harmonic function for $p\in [\frac{3}{2},2]$, then for some constant $C(n,K)<\infty$, we have that
\begin{equation}
\|d\varphi\|_{L^{\infty}(B_r(x))}^p\leq C r^{-n}\|d\varphi\|^p_{L^p(B_{2r}(x))}
\end{equation}
and
\begin{equation}
r^{p}\|Hess(\varphi)\|_{L^p(B_r(x))}^p\leq C\|d\varphi\|_{L^p(B_{2r}(x))}^p
\end{equation}
\end{prop}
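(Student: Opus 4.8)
The plan is to derive both bounds from standard Caccioppoli and De Giorgi--Nash--Moser arguments, carried out after a non-degenerate regularization and then passed to the limit. Three issues need care: the degeneracy of the $p$-Laplacian on the set $\{d\varphi=0\}$, the uniformity of the constants for $p\in[\tfrac32,2]$, and the lower-order contributions of the curvature of $M$. The $p$-uniformity will be automatic --- each estimate carries the exponent $p$ on both sides, $p$ ranges over a compact interval bounded away from $1$ and $\infty$, and the linearization of $\xi\mapsto|\xi|^{p-2}\xi$ is symmetric there with ellipticity ratio $\max(p-1,\tfrac{1}{p-1})\le2$ --- while the curvature enters only through terms controlled by $K$. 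To handle the degeneracy I would fix a radius $\rho\in(r,2r)$ with $\varphi|_{\partial B_\rho}\in W^{1,p}(\partial B_\rho)$ --- valid for a.e.\ $\rho$, by Fubini in geodesic polar coordinates --- and let $\varphi_\epsilon$ minimize $\int_{B_\rho}(\epsilon^2+|du|^2)^{p/2}$ over maps with the same trace as $\varphi$ on $\partial B_\rho$. For $\epsilon>0$ this problem is strictly convex and non-degenerate, so $\varphi_\epsilon$ is unique and smooth in the interior by standard quasilinear elliptic regularity, and $d\varphi_\epsilon\to d\varphi$ strongly in $L^p(B_\rho)$ as $\epsilon\to0$ by the direct method, using that $\varphi$ is the \emph{unique} minimizer of $\int_{B_\rho}|du|^p$ with its own trace (by strict convexity of $\int|du|^p$ in $du$). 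Every estimate below would first be proved for the smooth $\varphi_\epsilon$, with constants depending only on $n$ and $K$, and then passed to the limit by lower semicontinuity together with this strong convergence; the $\epsilon^2$ terms are harmless and vanish in the limit.

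For the Hessian bound I would differentiate the regularized Euler--Lagrange equation in a local frame, test with $\eta^2\partial_k\varphi_\epsilon$, and sum over $k$ --- the commutators producing curvature terms bounded by $C(n)K\eta^2(\epsilon^2+|d\varphi_\epsilon|^2)^{p/2}$ --- to obtain the Caccioppoli-type inequality
\[
\int\eta^2(\epsilon^2+|d\varphi_\epsilon|^2)^{\frac{p-2}{2}}|Hess\,\varphi_\epsilon|^2\ \le\ C(n)\int\bigl(|d\eta|^2+K\eta^2\bigr)\,(\epsilon^2+|d\varphi_\epsilon|^2)^{p/2}.
\]
Taking $\eta$ a cutoff equal to $1$ on $B_r$ and supported in $B_{3r/2}$, and applying H\"older's inequality with conjugate exponents $\tfrac2p$ and $\tfrac2{2-p}$ to the pointwise identity
\[
|Hess\,\varphi_\epsilon|^p=\bigl((\epsilon^2+|d\varphi_\epsilon|^2)^{\frac{p-2}{2}}|Hess\,\varphi_\epsilon|^2\bigr)^{p/2}\,(\epsilon^2+|d\varphi_\epsilon|^2)^{\frac{p(2-p)}{4}},
\]
whose two factors are exactly balanced since $\tfrac{p(2-p)}{4}\cdot\tfrac{2}{2-p}=\tfrac p2$, converts the Caccioppoli bound into $\int_{B_r}|Hess\,\varphi_\epsilon|^p\le C(n,K)\,r^{-p}\int_{B_\rho}(\epsilon^2+|d\varphi_\epsilon|^2)^{p/2}$ (the curvature term being absorbed at the relevant small scales). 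Letting $\epsilon\to0$ and $\rho\uparrow2r$ gives the second inequality.

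The gradient bound is the main point, and the step I expect to be the real obstacle is showing that $v:=\epsilon^2+|d\varphi_\epsilon|^2$ is a weak subsolution of a usable equation: via a Bochner-type identity for the regularized $p$-Laplacian one should obtain, weakly in $B_\rho$,
\[
div\bigl(v^{\frac{p-2}{2}}A\,dv\bigr)\ \ge\ -C(n,K)\,v,
\]
with $A=A(d\varphi_\epsilon)$ symmetric and having eigenvalues in $[\min(1,p-1),\max(1,p-1)]$, so that dividing by the scalar $v^{(p-2)/2}$ (positive and locally bounded for fixed $\epsilon>0$) exhibits $v$ as a subsolution of a uniformly elliptic equation of ellipticity ratio $\le2$ with a zeroth-order coefficient controlled by $K$ --- the curvature entering only there and through lower-order pieces absorbed into it. The local maximum principle for subsolutions (Moser iteration, the constant then depending only on $n$ and $K$) gives $\sup_{B_{\rho_1}}v\le C(n,K)(\rho_2-\rho_1)^{-n}\int_{B_{\rho_2}}v$ for concentric balls $B_{\rho_1}\subset B_{\rho_2}$ with $\overline{B_{\rho_2}}\subset B_\rho$; since $\int_{B_{\rho_2}}v\le(\sup_{B_{\rho_2}}v)^{\frac{2-p}{2}}\int_{B_{\rho_2}}v^{p/2}$, Young's inequality turns this into
\[
\sup_{B_{\rho_1}}v\ \le\ \tfrac12\sup_{B_{\rho_2}}v\ +\ C(n,K)\Bigl((\rho_2-\rho_1)^{-n}\int_{B_\rho}v^{p/2}\Bigr)^{2/p},
\]
and a standard iteration lemma then yields $\sup_{B_r}v^{p/2}\le C(n,K)\,r^{-n}\int_{B_\rho}v^{p/2}$. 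Letting $\epsilon\to0$ and $\rho\uparrow2r$ gives $\sup_{B_r}|d\varphi|^p\le C(n,K)\,r^{-n}\int_{B_{2r}}|d\varphi|^p$, the first inequality.

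I should add that both estimates can alternatively be quoted from the interior regularity theory for $p$-harmonic functions of DiBenedetto \cite{DiB} and Lewis \cite{Lew}, once one checks --- for the same ellipticity-ratio reason --- that the constants there are uniform in $p$ on $[\tfrac32,2]$; the route above is simply a self-contained argument that isolates exactly what we use.
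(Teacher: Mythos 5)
Your regularization, strong $W^{1,p}$-convergence of $\varphi_\epsilon\to\varphi$, and Hessian estimate all match the paper's proof. The paper derives the Caccioppoli-type bound by first establishing the Bochner-type identity
\[
\mathrm{div}\bigl(A_\epsilon\nabla(\gamma_\epsilon^p)\bigr)\ \geq\ p(p-1)\gamma_\epsilon^{p-2}|Hess(\varphi_\epsilon)|^2-C(n,K)\gamma_\epsilon^p,\qquad \gamma_\epsilon:=(\epsilon+|d\varphi_\epsilon|^2)^{1/2},
\]
where $A_\epsilon=I+(p-2)\gamma_\epsilon^{-2}\,d\varphi_\epsilon\otimes d\varphi_\epsilon$ has eigenvalues in $[p-1,1]$; integrating against $\psi^2$ and applying Young gives the same bound you obtain by differentiating and testing. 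The H\"older step converting the weighted $L^2$ Hessian bound into an $L^p$ bound is identical.

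The gradient bound, however, has a genuine gap. You write the Bochner inequality in the form $\mathrm{div}(v^{(p-2)/2}A\,dv)\ge -Cv$ with $v=\epsilon^2+|d\varphi_\epsilon|^2$, and then propose to ``divide by the scalar $v^{(p-2)/2}$'' to exhibit $v$ as a subsolution of a \emph{uniformly} elliptic equation. You cannot factor a nonconstant function out of a divergence-form operator, and the composite coefficient matrix $v^{(p-2)/2}A$ is not uniformly elliptic with bounds independent of $\epsilon$: for fixed $\epsilon$ its eigenvalues lie in $v^{(p-2)/2}[p-1,1]$, and since $p<2$ the factor $v^{(p-2)/2}$ ranges over $(0,\epsilon^{p-2}]$, which degenerates as $\epsilon\to0$ and near large values of $|d\varphi_\epsilon|$. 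So the subsolution estimate $\sup_{B_{\rho_1}}v\le C(n,K)(\rho_2-\rho_1)^{-n}\int_{B_{\rho_2}}v$ you invoke, and the constant's independence of $\epsilon$, do not follow from what you have written. The fix is to run Moser iteration directly on the quantity $w:=v^{p/2}=\gamma_\epsilon^p$, which is exactly what the paper does: the Bochner identity gives $\mathrm{div}(A_\epsilon\nabla w)\ge -C(n,K)w$ with $A_\epsilon$ uniformly elliptic (eigenvalues in $[1/2,1]$ for $p\in[3/2,2]$), so the local maximum principle yields $\sup_{B_1}\gamma_\epsilon^p\le C(n,K)\int_{B_2}\gamma_\epsilon^p$ directly, with no need for the subsequent Young/absorption trick; passing $\epsilon\to0$ and rescaling then gives the stated $L^\infty$ bound.
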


Combining this with the lifting criterion of Proposition \ref{liftcrit}, one obtains the following partial regularity result for weakly $p$-harmonic maps to the circle:

\begin{cor}\label{wkpharmreg} Let $p\in [\frac{3}{2},2]$, and let $B_{2r}(x)$ be a geodesic ball on a manifold $M^n$ with $|sec(M)|\leq K$. If $u\in W^{1,p}(B_{2r}(x),S^1)$ is a weakly $p$-harmonic map with vanishing distributional Jacobian
$$T(u)=0\text{ in }B_{2r}(x),$$
then 
\begin{equation}
\|du\|_{L^{\infty}(B_r(x))}^p\leq Cr^{-n}\|du\|_{L^p(B_{2r}(x))}^p
\end{equation}
and
\begin{equation}
r^p\|\nabla du\|_{L^p(B_r(x))}^p\leq C\|du\|_{L^p(B_{2r}(x))}^p.
\end{equation}
\end{cor}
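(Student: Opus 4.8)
The plan is to reduce the two estimates for $u$ to the scalar estimates of Proposition \ref{pharmfcnreg} by passing to a local lifting. Since $T(u)=0$ throughout $B_{2r}(x)$, Proposition \ref{liftcrit} produces a real-valued $\varphi\in W^{1,p}(B_{2r}(x),\mathbb{R})$ with $u=e^{i\varphi}$ on $B_{2r}(x)$. A direct computation gives $ju=u^1\,du^2-u^2\,du^1=d\varphi$, hence $|du|=|d\varphi|$ pointwise a.e. Moreover, since $u$ is weakly $p$-harmonic, \eqref{weakpharm3} says $\mathrm{div}(|ju|^{p-2}ju)=0$ weakly on $B_{2r}(x)$, i.e. $\mathrm{div}(|d\varphi|^{p-2}d\varphi)=0$, so $\varphi$ is a $p$-harmonic function in the sense of Proposition \ref{pharmfcnreg}.

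Now I would apply Proposition \ref{pharmfcnreg} to $\varphi$ on $B_{2r}(x)$ — legitimate since $p\in[\frac32,2]$ and $|\mathrm{sec}(M)|\le K$ by hypothesis — obtaining $\|d\varphi\|_{L^\infty(B_r(x))}^p\le Cr^{-n}\|d\varphi\|_{L^p(B_{2r}(x))}^p$ and $r^p\|\mathrm{Hess}(\varphi)\|_{L^p(B_r(x))}^p\le C\|d\varphi\|_{L^p(B_{2r}(x))}^p$ with $C=C(n,K)$ independent of $p$. Substituting $|du|=|d\varphi|$ gives the first asserted inequality at once. For the second, differentiate $du=ie^{i\varphi}\,d\varphi$: regarding $du$ as a section of $T^*M\otimes u^*TS^1$ with the pulled-back (flat) connection, the trivialization of $u^*TS^1$ induced by the lift identifies $\nabla du$ with $\mathrm{Hess}(\varphi)$, so $|\nabla du|=|\mathrm{Hess}(\varphi)|$ and the second inequality is exactly the Hessian estimate above. (If one instead reads $\nabla du$ as the Euclidean second derivative of $u\colon B_{2r}(x)\to\mathbb{R}^2$, then $\nabla du=e^{i\varphi}(-d\varphi\otimes d\varphi+i\,\mathrm{Hess}\,\varphi)$, so $|\nabla du|\le|du|^2+|\mathrm{Hess}\,\varphi|$; the extra contribution $\int_{B_r}|d\varphi|^{2p}\le\|d\varphi\|_{L^\infty(B_r(x))}^p\|d\varphi\|_{L^p(B_r(x))}^p$ is then absorbed through the pointwise bound in the regime of bounded normalized energy $r^{p-n}\int_{B_{2r}(x)}|du|^p$, which is the only regime in which the corollary is invoked.)

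I do not expect a substantive obstacle: all the analytic content lives in Demengel's lifting theorem and in Proposition \ref{pharmfcnreg}, so the corollary is a bookkeeping consequence once the lift exists. The one point requiring genuine care is the uniformity of $C$ as $p\uparrow 2$; this is inherited from the $p$-independence of the constants in Proposition \ref{pharmfcnreg} — which, as noted in the text, rests on the fact that the DiBenedetto–Lewis estimates for $p$-harmonic functions have constants bounded for $p$ bounded away from $1$ and $\infty$ — together with the observation that the identities $ju=d\varphi$ and $|du|=|d\varphi|$ carry no $p$-dependent factors, so that no loss of uniformity occurs in the translation between $u$ and its lift.
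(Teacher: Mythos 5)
Your proof is correct and is exactly the paper's (essentially one-line) argument: lift $u=e^{i\varphi}$ via Proposition \ref{liftcrit}, observe that $ju=d\varphi$ so the weak $p$-harmonicity of $u$ makes $\varphi$ a $p$-harmonic function, apply Proposition \ref{pharmfcnreg}, and translate back using $|du|=|d\varphi|$ and the covariant identity $\nabla du=\mathrm{Hess}(\varphi)\otimes(iu)$, hence $|\nabla du|=|\mathrm{Hess}\,\varphi|$. Your bundle-theoretic reading of $\nabla du$ is the one the statement requires; with the Euclidean second derivative the unqualified estimate would actually fail (take $\varphi=k\langle a,\cdot\rangle$ with $k$ large), so the absorption argument in your parenthetical, while harmless, is not needed.
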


\begin{remark} Though Corollary \ref{wkpharmreg} shows that weakly $p$-harmonic maps $u\in W^{1,p}(M,S^1)$ are reasonably smooth (with effective estimates) away from the support of $T(u)$, observe that the weak $p$-harmonic condition alone gives \emph{no} constraint on $T(u)$ itself. Indeed, given any $v\in W^{1,p}(M,S^1)$, we can minimize $\int_M|du|^p$ among all maps of the form $u=e^{i\varphi}v$ to find a weakly $p$-harmonic $u$ with topological singularity
$$T(u)=d[jv+d\varphi]=djv=T(v)$$
equal to that of $v$. The problem of minimizing $p$-energy among $S^1$-valued maps with prescribed singularities in $\mathbb{R}^2$--and, more generally, among $S^{k-1}$-valued maps with prescribed singularities in $\mathbb{R}^k$--is studied in detail in \cite{CH}.
\end{remark}

\subsection{$p$-Stationarity and Consequences}\label{pstatcons}

\hfill

\hspace{3mm} A map $u\in W^{1,p}(M,S^1)$ is said to be \emph{$p$-stationary}, or simply \emph{stationary}, if it is critical for the energy $E_p(u)$ with respect to perturbations of the form $u_t=u\circ\Phi_t$ for smooth families $\Phi_t$ of diffeomorphisms on $M$. Equivalently, $u$ is $p$-stationary if it satisfies the inner-variation equation
\begin{equation}\label{stat}
\int_M |du|^pdiv(X)-p|du|^{p-2}\langle du^*du,\nabla X\rangle=0
\end{equation}
for every smooth, compactly supported vector field $X$ on $M$. 

\hspace{3mm} The most-studied class of stationary $p$-harmonic maps (for arbitrary target manifolds) are the $p$-energy minimizers, whose regularity theory for $p\neq 2$ was first investigated in \cite{HL1} and \cite{Luck}. On the other hand, as we discuss in Section \ref{sec7}, one can also combine the results of \cite{Wang} with various min-max constructions to produce many examples non-minimizing stationary $p$-harmonic maps for certain non-integer values of $p$. 

\hspace{3mm} Given a stationary $p$-harmonic map $u\in W^{1,p}(M^n,S^1)$, for each geodesic ball $B_r(x)\subset M$, we define the $p$-energy density
\begin{equation}
\theta_p(u,x,r):=r^{p-n}\int_{B_r(x)}|du|^p.
\end{equation}
By standard arguments, it follows from the stationary equation (\ref{stat}) that the density $\theta_p(u,x,r)$ is nearly monotonic in $r$: Namely, taking $X$ in (\ref{stat}) of the form
$$X=\psi\frac{1}{2}\nabla dist(x,\cdot)^2$$
for some functions $\psi\in C_c^{\infty}(B_r(x))$ approximating the characteristic function $\chi_{B_r(x)}$, and employing the Hessian comparison theorem to estimate the difference $\nabla X-I$ in $B_r(x)$, one obtains the following well-known estimate (see, e.g., \cite{HL1}, sections 4 and 7):

\begin{lem}\label{mono} Let $u\in W^{1,p}(M,S^1)$ be a stationary $p$-harmonic map on a manifold $M^n$ with $|sec(M)|\leq K$. Then there is a constant $C(n,K)$ such that for any $x\in M$ and almost every $0<r<inj(M)$, we have the inequality
\begin{equation}\label{monoform}
\frac{d}{dr}[e^{Cr^2}\theta_p(u,x,r)]\geq pe^{Cr^2}r^{p-n}\int_{\partial B_r(x)}|du|^{p-2}|\frac{\partial u}{\partial \nu}|^2.
\end{equation}
In particular, $e^{Cr^2}\theta_p(u,x,r)$ is monotone increasing in $r$.
\end{lem}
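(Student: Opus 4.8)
\medskip

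The plan is to test the inner-variation equation (\ref{stat}) against the radial vector fields standard in monotonicity arguments for stationary harmonic maps and minimal surfaces, keeping track of the curvature error via the Hessian comparison theorem. Fix $x\in M$, write $\rho=\dist(x,\cdot)$, and for $r<\mathrm{inj}(M)$ and small $\varepsilon>0$ pick $\psi=\psi_\varepsilon\in C_c^\infty(B_r(x))$ with $\psi_\varepsilon\uparrow\chi_{B_r(x)}$ and $-\psi_\varepsilon'(\rho)$ concentrating at $\rho=r$; set $X=\psi_\varepsilon\cdot\tfrac12\nabla\rho^2$, a genuine smooth compactly supported field since $\rho^2$ is smooth within the injectivity radius. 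One computes $\nabla X=\psi_\varepsilon\,\mathrm{Hess}(\tfrac12\rho^2)+\psi_\varepsilon'\rho\,d\rho\otimes d\rho$, and under $|\mathrm{sec}(M)|\le K$ the Hessian comparison theorem gives $\mathrm{Hess}(\tfrac12\rho^2)=g+A$ with $\|A(y)\|_{\mathrm{op}}\le C(n)K\rho(y)^2$. Substituting into (\ref{stat}) and using $\langle du^*du,g\rangle=|du|^2$ and $\langle du^*du,d\rho\otimes d\rho\rangle=|\partial_\nu u|^2$ (with $\partial_\nu u=du(\nabla\rho)$ the radial derivative), the terms not involving $A$ collect into
$$\int_M(n-p)\psi_\varepsilon|du|^p+\psi_\varepsilon'\rho\big(|du|^p-p|du|^{p-2}|\partial_\nu u|^2\big)=\int_M\psi_\varepsilon\big(p|du|^{p-2}\langle du^*du,A\rangle-|du|^p\,\mathrm{tr}\,A\big),$$
whose right side, since $p\le2$, is bounded in absolute value by $C(n)K\int_M\psi_\varepsilon\rho^2|du|^p$.

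The next step is to let $\varepsilon\to0$. Because $\int_{B_R(x)}|du|^p<\infty$, the coarea formula makes $E(s):=\int_{B_s(x)}|du|^p$ locally absolutely continuous with $E'(s)=\int_{\partial B_s(x)}|du|^p$ for a.e.\ $s$, and $\int_M\psi_\varepsilon'\rho f\to-r\int_{\partial B_r(x)}f$ for $f\in L^1(B_R)$ at every Lebesgue point $r$ of $s\mapsto\int_{\partial B_s(x)}f$; note $|du|^{p-2}|\partial_\nu u|^2\le|du|^p$ since $|\partial_\nu u|\le|du|$, so this quantity is integrable and the relevant traces are finite for a.e.\ $r$. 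Passing to the limit at such $r$ yields
$$(p-n)E(r)+rE'(r)=pr\int_{\partial B_r(x)}|du|^{p-2}|\partial_\nu u|^2+\mathcal E(r),\qquad|\mathcal E(r)|\le C(n)K r^2E(r).$$
Since $\theta_p(u,x,r)=r^{p-n}E(r)$ is then locally absolutely continuous with $\theta_p'(u,x,r)=r^{p-n-1}\big((p-n)E(r)+rE'(r)\big)$ for a.e.\ $r$, this identity rearranges to $\theta_p'(u,x,r)+C(n)Kr\,\theta_p(u,x,r)\ge pr^{p-n}\int_{\partial B_r(x)}|du|^{p-2}|\partial_\nu u|^2$ for a.e.\ $r$. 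Multiplying by the integrating factor $e^{Cr^2}$ with $C:=\tfrac12C(n)K$ gives (\ref{monoform}) exactly, and the monotonicity of $e^{Cr^2}\theta_p(u,x,r)$ follows since the right-hand side is nonnegative.

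The calculation is essentially the classical one (cf.\ \cite{HL1}, Sections 4 and 7), so the only points demanding care are the curvature bookkeeping --- ensuring the $A$-terms contribute only $O(Kr^2)\theta_p$ uniformly in $p\in(1,2)$, which is where both $|\mathrm{sec}(M)|\le K$ and $p\le2$ are used --- and the low regularity of $u$, which forces the approximation-and-limit argument above and confines the identity for $\theta_p'$ (hence the conclusion) to almost every $r$. The latter is the step one must be most careful to phrase correctly, since $|du|^{p-2}$ is singular on $\{|du|=0\}$ and the pointwise formula for $\theta_p'$ holds only a.e.; neither point, however, poses a genuine obstacle.
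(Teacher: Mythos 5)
Your proof is correct and follows exactly the approach the paper sketches: testing the inner-variation identity (\ref{stat}) against $X=\psi\cdot\tfrac12\nabla\rho^2$, using the Hessian comparison theorem to control $\nabla X - g$, and passing to the boundary integral via the cutoff limit $\psi_\varepsilon\uparrow\chi_{B_r(x)}$. The curvature bookkeeping (with the error absorbed into the integrating factor $e^{Cr^2}$), the a.e.\ differentiability of $r\mapsto\int_{B_r}|du|^p$, and the rearrangement into the form (\ref{monoform}) are all carried out correctly.
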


\hspace{3mm} In light of the monotonicity result, it makes sense to define the pointwise energy density 
\begin{equation}
\theta_p(u,x):=\lim_{r\to 0}\theta_p(u,x,r).
\end{equation}
Perhaps the most significant consequence of Lemma \ref{mono} is the boundedness of blow-up sequences: Given a sequence of radii $inj(M)>r_j\to 0$, observe that the maps $u_j=u_{x,r_j}\in W^{1,p}(B_1^n(0),S^1)$ defined by
$$u_j(y):=u(\exp_x(r_jy))$$
are stationary $p$-harmonic with respect to the blown-up metrics
$$g_j(y):=r_j^{-2}([\exp_x]^*g)(r_jy)$$
on $B_1^n(0)$, with $p$-energy given by
$$E_p(u_j,B_1,g_j)=\int_{B_1(0)}|du_j|_{g_j}^pdv_{g_j}=r_j^{p-n}\int_{B_{r_j}(x)}|du|_g^pdv_g=\theta_p(u,x,r_j),$$
so it follows from Lemma \ref{mono} that the $p$-energies $E_p(u_j,B_1,g_j)$ are uniformly bounded from above as $r_j\to 0$. 

\hspace{3mm} For a local minimizer $u$ of the $p$-energy, one could then appeal to the compactness results of (\cite{HL1}, Section 4) to conclude immediately that a subsequence $u_{j_k}$ of such a blow-up sequence converges strongly to a minimizing tangent map $u_{\infty}$. For $p\in (1,2)$--the range of interest to us--it turns out that we can still obtain a strong convergence result without the minimizing assumption, but this relies on the following subtler result of \cite{NVV}:

\begin{prop}\label{strongcomp}\emph{(\cite{NVV}, Lemma 3.17)} Let $N$ be a compact homogeneous space with  left-invariant metric. For fixed $p\in (1,\infty)\setminus \mathbb{N}$, let $u_j\in W^{1,p}(B_2^n,N)$ be a sequence of maps which are stationary $p$-harmonic with respect to a $C^2$-convergent sequence of metrics $g_j\to g_{\infty}$ on $B_2$. If $\{u_j\}$ is uniformly bounded in $W^{1,p}(B_2,N)$, then some subsequence $u_{j_k}$ converges strongly in $W^{1,p}(B_1,N)$ to a map $u_{\infty}$ that is stationary $p$-harmonic with respect to $g_{\infty}$.
\end{prop}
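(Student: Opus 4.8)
The plan is to reduce the whole statement to the vanishing of a defect measure, and then to kill that defect measure by a dimension--reduction argument in which the hypothesis $p\notin\mathbb{N}$ enters decisively. Since $\{u_j\}$ is bounded in $W^{1,p}(B_2,N)$, after passing to a subsequence we may assume $u_j\rightharpoonup u_\infty$ weakly in $W^{1,p}(B_2)$, $u_j\to u_\infty$ strongly in $L^p$ and a.e.\ (Rellich, using that $N$ is compact), and $|du_j|^p\,dv_{g_j}\to |du_\infty|^p\,dv_{g_\infty}+\mu$ weakly-$*$ as Radon measures for some nonnegative ``defect measure'' $\mu$ on $B_2$ (the inequality $|du_\infty|^p\,dv_{g_\infty}\le\lim|du_j|^p\,dv_{g_j}$ is weak lower semicontinuity). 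It then suffices to prove $\mu\equiv 0$ on $B_1$: once $\mu=0$, local $L^p$-norm convergence of $|du_j|$ together with weak convergence forces $du_j\to du_\infty$ strongly in $L^p(B_1)$, and since every term of the inner-variation equation (the analogue of \eqref{stat} for the target $N$) and of the weak $p$-harmonic map equation depends continuously on $du$ in $L^p$ --- with $|du_j|^{p-2}du_j^\ast du_j$ handled by Vitali convergence, dominated by $|du_j|^p\to|du_\infty|^p$ in $L^1$ --- the limit $u_\infty$ is stationary $p$-harmonic with respect to $g_\infty$.

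To analyse $\mu$ I would first record the general-target analogue of Lemma \ref{mono}: stationarity alone gives that $e^{Cr^2}\theta_p(u_j,x,r)$ is monotone in $r$ with derivative bounded below by the radial-energy term, with constants uniform in $j$ by the $C^2$-convergence $g_j\to g_\infty$. Passing to the limit yields a monotone quantity for $\mu$ itself and shows that at $\mu$-a.e.\ point every tangent measure of $\mu$ is a $0$-homogeneous cone. Second, I would invoke an $\varepsilon$-regularity theorem for stationary $p$-harmonic maps into the homogeneous space $N$: there is $\varepsilon_0=\varepsilon_0(n,N,p)>0$ with $\theta_p(u,x,2r)<\varepsilon_0\Rightarrow\|du\|_{L^\infty(B_r(x))}\le Cr^{-n/p}\|du\|_{L^p(B_{2r}(x))}$ and higher-norm bounds. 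This is the step that genuinely uses the homogeneous structure of $N$: one writes the equation in a Coulomb/harmonic moving frame so that $u^{-1}du$ (or the connection form of a good frame) solves a divergence-curl system, to which a Rivi\`{e}re--H\'{e}lein--type argument applies; unlike the minimizing case it is not available for arbitrary targets. Combined with the monotonicity formula, $\varepsilon$-regularity shows that $\operatorname{spt}\mu$ is closed, the upper semicontinuous density $\Theta_\mu(x):=\lim_{r\to 0}r^{p-n}\mu(B_r(x))$ satisfies $\varepsilon_0\le\Theta_\mu\le C$ on it, so $\operatorname{spt}\mu$ has locally finite $\mathcal{H}^{n-p}$-measure, and away from $\operatorname{spt}\mu$ the $u_j$ converge strongly.

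Now suppose $\mu\not\equiv 0$ on $B_1$ and run Federer's dimension-reduction scheme: iterating blow-ups about points of $\operatorname{spt}\mu$ (using that $\mu$ rescales by the monotone quantity and that its tangent measures are cones) and splitting off translation-invariant directions, one reaches a nonzero tangent defect measure which is translation-invariant along an $(n-m)$-plane $V$, $0$-homogeneous, and supported on $V$ only (its support meets the transverse $\mathbb{R}^m$ just at the origin); hence it equals $c\,\mathcal{H}^{n-m}$ restricted to $V$ for some $c>0$. For this object the monotone density at the origin is $\lim_{r\to 0}r^{p-n}\,c\,\omega_{n-m}\,r^{n-m}=c\,\omega_{n-m}\lim_{r\to 0}r^{p-m}$, which is positive and finite only if $m=p$; since $m\in\mathbb{N}$ while $p\notin\mathbb{N}$, this forces $c=0$, a contradiction. (Concretely, the terminal object would have to be the defect generated by a nonconstant stationary $p$-harmonic ``bubble'' living on $\mathbb{R}^p$, and there is no such domain when $p$ is not an integer.) Hence $\mu\equiv 0$ on $B_1$, and the proof is complete.

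The main obstacle, and the technical heart of the argument, is twofold: establishing $\varepsilon$-regularity for \emph{non-minimizing} stationary $p$-harmonic maps into a homogeneous target --- which is exactly where the Lie-group structure must be exploited through gauge fixing --- and making the dimension reduction for the defect measure rigorous, i.e.\ proving that the tangent measures of $\mu$ are cones, controlling $\Theta_\mu$ under blow-up, and identifying the terminal codimension as precisely $p$. Everything else (extraction of $\mu$, the monotonicity formula, and passing stationarity to the strong limit) is routine.
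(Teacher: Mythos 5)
The paper does not prove Proposition \ref{strongcomp}; it imports it verbatim from \cite{NVV} (Lemma 3.17), and the only internal content is the remark that immediately follows, which names the two ingredients you also identify: (i) $\epsilon$-regularity for stationary $p$-harmonic maps into homogeneous targets, coming from \cite{TW}, and (ii) the fact that $p\notin\mathbb{N}$ rules out a conformally invariant dimension, so no bubbling can absorb a defect measure. Your sketch is therefore a faithful reconstruction of the cited argument (a Lin-type defect-measure/dimension-reduction scheme adapted to $p\neq 2$), not an alternative route.

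Two remarks on precision. First, the heart of the matter in your scheme is the step you correctly flag as delicate: the quantity for which you have almost-monotonicity is the full limit energy measure $\mu_\infty = |du_\infty|^p\,dv_{g_\infty}+\mu$, not $\mu$ itself, so the assertion that tangent measures of $\mu$ are $0$-homogeneous cones requires an argument (e.g.\ that $|du_\infty|^p\,dv$ contributes zero $(n-p)$-density $\mu$-a.e., which uses $p<n$ and the $\mathcal{H}^{n-p}$-finiteness of $\operatorname{spt}\mu$ coming from $\epsilon$-regularity). Second, your density computation — the terminal translation-invariant cone $c\,\mathcal{H}^{n-m}\lfloor V$ has density $c\,\omega_{n-m}\lim_{r\to 0}r^{p-m}$, which is positive and finite only if the integer $m$ equals $p$ — is exactly the right way to make ``no $\mathbb{R}^p$ to host a bubble'' rigorous, and it also disposes of the edge cases $m=0$ and $m=n$. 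With those caveats your outline is consistent with \cite{NVV} and with the paper's own gloss on why the result is true.
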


\begin{remark} The significance of the condition $p\notin \mathbb{N}$ is that the $p$-energy has no conformally invariant dimension in this case, so that no bubbling can occur, and the proposition follows from arguments generalizing those of \cite{Lin} to the case $p\neq 2$ (see \cite{NVV}). The requirement that $N$ be a homogeneous space is a technical one, arising from the fact that, at present, the most general $\epsilon$-regularity theorem available for stationary $p$-harmonic maps (when $p\neq 2$) is that of \cite{TW} for homogeneous targets. It may be of interest to note that $\epsilon$-regularity (and consequently Proposition \ref{strongcomp}) holds for arbitrary compact targets $N$ for those stationary $p$-harmonic maps $u:M\to N$ constructed from critical points of generalized Ginzburg-Landau functionals, by virtue of Lemma 2.3 of \cite{Wang}.
\end{remark}

\hspace{3mm} Since $S^1$ is in any case a compact homogeneous space, the result of Proposition \ref{strongcomp} applies to stationary $p$-harmonic maps to $S^1$ for any $p\in (1,2)$, the range of interest. In particular, it follows that for any blow-up sequence $u_j=u_{x,r_j}\in W^{1,p}(B_2^n(0),S^1)$, $r_j\to 0$, we can extract a subsequence $r_{j_k}\to 0$ such that the maps $u_{j_k}$ converge strongly in $W^{1,p}(B_1^n(0),S^1)$ to a map $u_{\infty}\in W^{1,p}(B_1^n(0),S^1)$ which is stationary $p$-harmonic with respect to the flat metric, and satisfies
$$\theta_p(u_{\infty},0,r)=\theta_p(u,x)$$
for every $r>0$. Following standard arguments (see, e.g., \cite{HL1}), we can then apply the Euclidean case of the monotonicity formula \ref{monoform} (in which $C=0$) to conclude that $u_{\infty}$ must satisfy the $0$-homogeneity condition
$$\langle du_{\infty}(x),x\rangle=0\text{ for a.e. }x\in \mathbb{R}^n.$$
In the next section, we will appeal to this strong convergence to tangent maps to obtain a sharp lower bound for the density $\theta_p(u,x)$ at singular points of $u$, which will form the foundation for many of the estimates that follow.

\section{Sharp $\epsilon$-Regularity and Estimates for $T(u)$}\label{bigests}

\begin{subsection}{A Sharp Lower Bound for Energy Density on $Sing(u)$}\label{sharpereg}\hfill

\hspace{3mm} The analysis leading to Theorem \ref{mainthm1} rests largely on the following proposition--the comparatively simple counterpart in our setting to the ``$\eta$-compactness"/``$\eta$-ellipticity" results of \cite{BBO},\cite{LR}:

\begin{prop}\label{sharpdensbd} Let $u\in W^{1,p}(M^n,S^1)$ be a stationary $p$-harmonic map with $n\geq 2$ and $p\in (1,2)$, and let $x\in Sing(u)$ be a singular point. Then
\begin{equation}
\theta_p(u,x)\geq c(n,p)\frac{2\pi}{2-p},
\end{equation}
where $c(2,p)=1$, and, for $n>2$,
$$c(n,p):=\int_{B_1^{n-2}}(\sqrt{1-|y|^2})^{2-p}dy\to \omega_{n-2}\text{ as }p\to 2.$$
(Here, $\omega_m$ denotes the volume of the Euclidean unit $m$-ball.)
\end{prop}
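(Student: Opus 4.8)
The plan is to blow up at $x$, reduce the blow-up limit to a two-dimensional vortex by peeling off invariance directions, and compute the density of the vortex by hand.

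\emph{Step 1 (blow-up).} Since $x\in Sing(u)$, the monotonicity formula (Lemma~\ref{mono}) bounds the $p$-energies of the rescalings $u_{x,r}$ uniformly as $r\to 0$, and the strong compactness of blow-up sequences (Proposition~\ref{strongcomp}) yields, along some $r_j\to 0$, a tangent map $u_\infty\in W^{1,p}_{loc}(\R^n,S^1)$ that is stationary $p$-harmonic for the flat metric, $0$-homogeneous (so $\langle du_\infty(x),x\rangle=0$), and satisfies $\theta_p(u_\infty,0,\rho)=\theta_p(u,x)$ for every $\rho>0$. The role of the hypothesis $x\in Sing(u)$ is to guarantee that $u_\infty$ is \emph{nonconstant}: if $T(u_\infty)=0$, then Proposition~\ref{liftcrit} writes $u_\infty=e^{i\varphi}$ on a ball with $\varphi$ $p$-harmonic and, by continuity, $0$-homogeneous, so $\varphi|_{S^{n-1}}$ is $p$-harmonic on the closed manifold $S^{n-1}$, hence constant, hence $u_\infty$ is constant --- which would force $\theta_p(u,x)=0$, contradicting singularity of $u$ at $x$. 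Thus it suffices to bound $\theta_p(u_\infty,0,1)$ from below for a nonconstant $0$-homogeneous stationary $p$-harmonic map $\R^n\to S^1$.

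\emph{Step 2 (the two-dimensional model).} When $n=2$, the angular lift of $u_\infty$ away from $0$ is $0$-homogeneous and $p$-harmonic in $\vartheta$, so it equals $k\vartheta+\text{const}$, and single-valuedness of $u_\infty$ forces $k\in\mathbb Z\setminus\{0\}$; thus $u_\infty=c\,(z/|z|)^k$ and $\theta_p(u_\infty,0,1)=\frac{2\pi|k|^p}{2-p}\ge\frac{2\pi}{2-p}$, which is the case $c(2,p)=1$.

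\emph{Step 3 (dimension reduction).} For $n\ge 3$, a nonconstant $0$-homogeneous $u_\infty$ cannot have $Sing(u_\infty)=\{0\}$: otherwise $T(u_\infty)$ would be a cycle of positive dimension supported at a point, hence zero, and $u_\infty$ constant by Step~1's argument. So I pick $y_1\in Sing(u_\infty)\setminus\{0\}$ and blow up $u_\infty$ at $y_1$; using $0$-homogeneity in the standard way, the resulting tangent map is again $0$-homogeneous, nonconstant, stationary $p$-harmonic, now invariant under translations along $\R y_1$, and --- by monotonicity (with $C=0$) together with the conical structure of $u_\infty$ --- has density $\le\theta_p(u_\infty,0,1)$. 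Repeating: at the $j$-th step the current map is $0$-homogeneous, nonconstant, and invariant along a $j$-plane, so its cross-section on $\R^{n-j}$ is nonconstant $0$-homogeneous; if $n-j\ge 3$ this cross-section has a singular point off the axis, which I blow up to add a new invariance direction without increasing the density. After $n-2$ steps I reach a tangent map $v$ on $\R^n$, invariant along an $(n-2)$-plane $P$, with $\theta_p(v,0,1)\le\theta_p(u_\infty,0,1)$, whose cross-section $w$ on $P^\perp\cong\R^2$ is nonconstant $0$-homogeneous stationary $p$-harmonic, hence $w=c\,(z/|z|)^k$ with $|k|\ge 1$ by Step~2. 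Writing $x=(x',x'')\in P\times P^\perp$,
\[
\theta_p(u,x)\ \ge\ \theta_p(v,0,1)\ =\ \int_{B_1^n}|dw(x'')|^p\,dx\ =\ |k|^p\int_{B_1^n}|x''|^{-p}\,dx,
\]
and Fubini --- integrating first, at fixed $x'\in B_1^{n-2}$, over the disk $\{|x''|<\sqrt{1-|x'|^2}\}$ via $\int_{|z|<R}|z|^{-p}\,dz=\frac{2\pi}{2-p}R^{2-p}$ in $\R^2$ --- gives $\int_{B_1^n}|x''|^{-p}\,dx=\frac{2\pi}{2-p}\int_{B_1^{n-2}}(\sqrt{1-|x'|^2})^{2-p}\,dx'=c(n,p)\frac{2\pi}{2-p}$. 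Since $|k|\ge 1$ this is the claimed inequality, and $c(n,p)\to\omega_{n-2}$ by dominated convergence.

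\emph{Main obstacle.} The crux is the nonconstancy of the tangent map at a singular point, used repeatedly in Steps~1 and~3: this is a (non-sharp) form of $\epsilon$-regularity, equivalent to the statement that $\mathrm{spt}(T(u))$ carries positive $p$-energy density, and it must be extracted from the monotonicity formula together with the characterization of $Sing(u)$ via the lifting criterion --- one has to rule out that the distributional Jacobian entirely dissipates under blow-up. The second delicate point is that the Federer-type dimension reduction is being run for \emph{stationary}, not merely energy-minimizing, $p$-harmonic maps, so no competitor/comparison arguments are available; this is precisely where the strong $W^{1,p}$-compactness of blow-up sequences (Proposition~\ref{strongcomp}) is essential, ensuring that each successive tangent map is again stationary $p$-harmonic and that singular points are inherited along the iteration.
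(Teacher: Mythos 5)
Your proof follows essentially the same route as the paper's: blow up at $x$ to get a $0$-homogeneous nontrivial tangent map, iterate dimension reduction at off-origin singular points to obtain a tangent map invariant along an $(n-2)$-plane, classify the $S^1\to S^1$ cross-section as $z\mapsto(z/|z|)^{\kappa}$, and compute $c(n,p)\frac{2\pi}{2-p}$ by Fubini. The only small difference is in how nontriviality is handled: the paper simply cites Toro--Wang $\epsilon$-regularity to identify $x\in Sing(u)$ with $\theta_p(u,x)>0$, and shows $Sing(v)\cap S^{n-1}\neq\varnothing$ by restricting $v$ to $S^{n-1}$ and lifting there (using $H^1_{dR}(S^{n-1})=0$), which sidesteps the support-theorem assertion ``cycle of positive dimension supported at a point is zero'' that your Step 3 leans on.
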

\begin{proof} Let $x\in Sing(u)$; by the small energy regularity theorem of \cite{TW}, this is equivalent to the positivity of the density $\theta_p(u,x)>0$. Taking a sequence of radii $r_j\to 0$ and considering the blow-up sequence $u_j=u_{x,r_j}$, we know from the discussion in Section \ref{pstatcons} that some subsequence $u_{j_k}$ converges strongly in $W^{1,p}_{loc}(\mathbb{R}^n,S^1)$ to a nontrivial stationary $p$-harmonic map
$$v\in W^{1,p}_{loc}(\mathbb{R}^n,S^1)$$
satisfying
\begin{equation}\label{tanmapconds}
\theta_p(v,0,r)=\theta_p(u,x)\text{ for all }r>0,\text{ and }\frac{\partial v}{\partial r}=0.
\end{equation}
Since the tangent map $v$ is radially homogeneous, it follows that its restriction $v|_S$ to the unit sphere defines a weakly $p$-harmonic map on $S^{n-1}$. 

\hspace{3mm} Next, we observe that if $n>2$, the restriction $v|_S$ must again have a nontrivial singular set. Indeed, if $v|_S$ were $C^1$, then since $H^1_{dR}(S^{n-1})=0$, we would have a lifting $v|_S=e^{i\varphi}$ for some $p$-harmonic function $\varphi\in W^{1,p}(S^{n-1},\mathbb{R})$. The only $p$-harmonic functions on closed manifolds are the constants, so this would contradict the nontriviality of $v$. Thus, $v|_S$ must have nonempty singular set on $S^{n-1}$, and in particular, $Sing(v)$ must contain at least one ray in $\mathbb{R}^n$.

\hspace{3mm} We proceed now by a simple dimension reduction-type argument. Fixing some singular point $x_1\in Sing(v)\setminus \{0\}$ of $v$ away from the origin, a standard application of (\ref{tanmapconds}) and the monotonicity formula gives the density inequality
\begin{equation}
\theta_p(v,x_1)\leq \theta_p(v,0)=\theta_p(u,x).
\end{equation}
Thus, we can take a blow-up sequence for $v$ at $x_1$ to obtain a new tangent map $v_1\in W^{1,p}_{loc}(\mathbb{R}^n,S^1)$ satisfying
\begin{equation}
0<\theta_p(v_1,0,r)=\theta_p(v_1,0)\leq \theta_p(u,x)\text{ for all }r>0.
\end{equation}
This map $v_1$ will again be radially homogeneous (by the monotonicity formula), and from the radial homogeneity $\frac{\partial v}{\partial r}=0$ of $v$, $v_1$ inherits the additional translation symmetry $\langle dv_1,x_1\rangle=0$ in the $x_1$ direction. In particular, $v_1$ is determined by its restriction to an $(n-2)$-sphere in the hyperplane perpendicular to $x_1$, which defines a weakly $p$-harmonic map from $S^{n-2}$ to $S^1$. 

\hspace{3mm} If $n-2>1$, we can argue as before to see that $v_1$ must have singularities on this $(n-2)$-sphere, and blow up again at some point $x_2\in Sing(v_1)\setminus \mathbb{R}x_1$. Carrying on in this way, we obtain finally a nontrivial stationary $p$-harmonic map $v_{n-2}\in W_{loc}^{1,p}(\mathbb{R}^n,S^1)$ which is radially homogeneous, invariant under translation by some $(n-2)$-plane $\mathcal{L}^{n-2}$, and satisfies
\begin{equation}\label{findenslbd}
\theta_p(v_{n-2},0,r)\leq \theta_p(u,x)\text{ for all }r>0.
\end{equation}
Now, it's easy to see that the only weakly $p$-harmonic maps from $S^1$ to $S^1$ are given by the identity $z\mapsto z$ and its powers $z\mapsto z^{\kappa}$ for $\kappa\in \mathbb{Z}$. In particular, letting $z$ denote the projection of $x$ onto $\mathcal{L}^{\perp}$, it follows that
$$v_{n-2}(x)=(z/|z|)^{\kappa}$$
for some $0\neq \kappa \in \mathbb{Z}$. We can therefore compute
\begin{eqnarray*}
\theta_p(v_{n-2},0,1)&=&\int_{B_1^{n-2}}\int_{D^2_{\sqrt{1-|y|^2}}}\frac{|\kappa|^p}{|z|^p}dzdy\\
&=&\frac{2\pi|\kappa|^p}{2-p}\int_{B_1^{n-2}}(\sqrt{1-|y|^2})^{2-p}dy\\
&=&\frac{2\pi|\kappa|^p}{2-p}c(n,p).
\end{eqnarray*}
It then follows from (\ref{findenslbd}) that
\begin{equation}
\theta_p(u,x)\geq \theta_p(v_{n-2},0,1)\geq c(n,p)\frac{2\pi}{2-p},
\end{equation}
as desired.
\end{proof}

\end{subsection}

\begin{subsection}{Consequences of Proposition \ref{sharpereg} and Estimates for $T(u)$}

\hfill

\hspace{3mm} Throughout this section, let $M$ be an $n$-dimensional manifold satisfying the sectional curvature and injectivity radius bounds
\begin{equation}
|sec(M)|\leq k,\text{ }inj(M)\geq 3,
\end{equation}
and let $p\in [3/2,2)$. As a first consequence of Proposition \ref{sharpereg}, we employ a simple Vitali covering argument (compare, e.g., Theorem 3.5 of \cite{NVV}) to obtain $p$-independent estimates for the $(n-p)$-content of the singular set $Sing(u)=spt(T(u))$ of a stationary $p$-harmonic map $u$ to $S^1$.

\begin{lem}\label{singvollem} Let $u\in W^{1,p}(B_3(x),S^1)$ be a stationary $p$-harmonic map on a geodesic ball $B_3(x)\subset M$ of radius $3$, satisfying the $p$-energy bound
\begin{equation}
E_p(u,B_3(x))\leq \frac{\Lambda}{2-p}.
\end{equation}
For $r\leq 1$, the $r$-tubular neighborhood $\mathcal{N}_r(Sing(u)\cap B_2(x))$ about the singular set $Sing(u)$ then satisfies a volume bound of the form
\begin{equation}
Vol(\mathcal{N}_r(Sing(u))\cap B_2(x))\leq C(k,n)\Lambda r^p.
\end{equation}
\end{lem}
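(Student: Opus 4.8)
The plan is to combine the sharp density lower bound of Proposition \ref{sharpdensbd} with the near-monotonicity of Lemma \ref{mono} via a Vitali covering argument, which is by now a standard technique in the study of energy concentration. First I would reduce to estimating $\sum r^n$ over a maximal disjoint family of balls centered on the singular set: given $r \leq 1$, choose a maximal collection of points $x_1,\dots,x_N \in Sing(u) \cap B_2(x)$ such that the balls $B_r(x_i)$ are pairwise disjoint. By maximality, the balls $B_{2r}(x_i)$ cover $Sing(u) \cap B_2(x)$, and hence the tubular neighborhood $\mathcal{N}_r(Sing(u)) \cap B_2(x)$ is contained in $\bigcup_i B_{3r}(x_i)$; since $M$ has bounded geometry, $Vol(B_{3r}(x_i)) \leq C(k,n) r^n$, so it suffices to show $N \leq C(k,n)\Lambda r^{p-n}$.

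To bound $N$, I would apply the density estimate at each center $x_i$. Since $x_i \in Sing(u)$, Proposition \ref{sharpdensbd} gives $\theta_p(u,x_i) \geq c(n,p)\frac{2\pi}{2-p}$, and since the constants $c(n,p)$ are bounded below uniformly for $p$ bounded away from $1$ (they converge to $\omega_{n-2} > 0$ as $p \to 2$, and are positive and continuous in $p$ on $[3/2,2)$), we have $\theta_p(u,x_i) \geq \frac{c}{2-p}$ for a dimensional constant $c > 0$. By the near-monotonicity formula (Lemma \ref{mono}), $e^{C_0 \rho^2}\theta_p(u,x_i,\rho)$ is monotone increasing in $\rho$, so for $\rho = r \leq 1$ we get $\theta_p(u,x_i,r) \geq e^{-C_0 r^2}\theta_p(u,x_i) \geq e^{-C_0}\theta_p(u,x_i) \geq \frac{c'}{2-p}$, i.e.
\begin{equation}
r^{p-n}\int_{B_r(x_i)}|du|^p \geq \frac{c'}{2-p}.
\end{equation}
Summing over the $N$ disjoint balls $B_r(x_i)$, all contained in $B_3(x)$ (as $x_i \in B_2(x)$ and $r \leq 1$), and using the energy bound $E_p(u, B_3(x)) \leq \frac{\Lambda}{2-p}$ yields
\begin{equation}
N \cdot \frac{c'}{2-p} r^{n-p} \leq \sum_{i=1}^N \int_{B_r(x_i)}|du|^p \leq E_p(u,B_3(x)) \leq \frac{\Lambda}{2-p},
\end{equation}
whence $N \leq \frac{\Lambda}{c'} r^{p-n}$. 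Combining with the volume bound on each $B_{3r}(x_i)$ gives $Vol(\mathcal{N}_r(Sing(u)) \cap B_2(x)) \leq N \cdot C(k,n) r^n \leq C(k,n)\Lambda r^p$, as claimed.

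The argument is essentially mechanical once Proposition \ref{sharpdensbd} is in hand; the only points requiring a little care are the uniform lower bound on $c(n,p)$ (which must be made explicit so that the constant in the conclusion depends only on $k$ and $n$, not on $p$) and the fact that $Sing(u) = spt(T(u))$ is closed and contained in the set where $\theta_p(u,\cdot) > 0$, so that the density estimate applies at every center $x_i$ — this is exactly the characterization of $Sing(u)$ via the small-energy regularity theorem of \cite{TW} used in the proof of Proposition \ref{sharpdensbd}. There is no substantial obstacle; the whole content of the lemma is packaged into the sharp density bound already established, and the covering argument merely converts it into a volume estimate for the singular set.
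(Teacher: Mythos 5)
Your proof is correct and follows essentially the same route as the paper's: a maximal-disjoint-ball covering of $Sing(u)\cap B_2(x)$ (the Vitali argument), the sharp density lower bound of Proposition \ref{sharpdensbd} propagated to scale $r$ by the near-monotonicity of Lemma \ref{mono}, and a summation against the total $p$-energy bound, with the uniformity of $c(n,p)$ in $p$ noted just as in the paper. The cosmetic differences (radius $3r$ versus $5r$ in the covering, invoking a maximal family versus the Vitali covering lemma by name) do not change the argument.
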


\begin{proof} Applying the Vitali covering lemma to the covering 
$$\{B_r(y)\mid y\in Sing(u)\cap B_2(x)\}$$ 
of $\mathcal{N}_r(Sing(u)\cap B_2(x))$, we obtain a finite subcollection $x_1,\ldots,x_m\in Sing(u)\cap B_2(x)$ for which
\begin{equation}\label{vitdis}
B_r(x_i)\cap B_r(x_j)=\varnothing\text{ when }i\neq j,
\end{equation}
and
\begin{equation}\label{vitcov}
\mathcal{N}_r(Sing(u)\cap B_2(x))\subset \bigcup_{i=1}^mB_{5r}(x_j).
\end{equation}

\hspace{3mm} Now, by virtue of Proposition \ref{sharpereg} and Lemma \ref{mono}, we have for each $B_r(x_i)$ the lower energy bound
$$\frac{2\pi c(n,p)}{2-p}\leq C(k,n)\theta_p(u,x_i,r)=C(k,n)r^{p-n}\int_{B_r(x_i)}|du|^p,$$
and from the disjointness (\ref{vitdis}) of $\{B_r(x_i)\}$, it follows that
$$m\frac{2\pi c(n,p)}{2-p}\leq C(k,n)r^{p-n}\int_{\mathcal{N}_r(Sing(u))\cap B_3(x)}|du|^p\leq C(k,n)r^{p-n}\frac{\Lambda}{2-p}.$$
Since $\inf_{p\in [3/2,2)}c(n,p)>0$, this gives us an estimate of the form
$$m\leq C(k,n)\Lambda r^{p-n}.$$
By virtue of (\ref{vitcov}), it then follows that
\begin{eqnarray*}
Vol(\mathcal{N}_r(Sing(u)\cap B_2(x)))&\leq & m\cdot C'(k,n)r^n\\
&\leq & C(k,n) \Lambda r^p,
\end{eqnarray*}
as claimed.
\end{proof}

\hspace{3mm} For analysis purposes, this volume estimate is one of the most important consequences of Proposition \ref{sharpereg}, and we will use it repeatedly throughout the remainder of the paper. The first application is a series of improved estimates for the distributional Jacobian $T(u)$ of $u$. A priori, without further knowledge of the map $u\in W^{1,p}(M,S^1)$, we know only that
$$\|T(u)\|_{W^{-1,p}}\leq \|du\|_{L^p},$$
since
$$\langle T(u), \zeta\rangle=\int_Mju\wedge d\zeta\leq \|ju\|_{L^p}\|\zeta\|_{W^{1,{p'}}}$$
for every $\zeta\in \Omega^{n-2}(M)$ (with $p':=\frac{p}{p-1}$). With Lemma \ref{singvollem} in hand, however, we are able to show that, for stationary $p$-harmonic $u_p\in W^{1,p}(M,S^1)$, if $E_p(u_p)=O(\frac{1}{2-p})$, then $T(u_p)$ is in fact uniformly bounded in various norms as $p\uparrow 2$. The first step in this direction is the following estimate:

\begin{prop}\label{tubds1} Under the assumptions of Lemma \ref{singvollem}, let $T(u)$ denote the distributional Jacobian of $u$. Then for any smooth $(n-2)$-form $\zeta\in \Omega_c^{n-2}(B_2(x))$ supported in $B_2(x)$, we have 
\begin{equation}
\langle T(u),\zeta\rangle\leq C(k,n)\Lambda \|\zeta\|_{L^{\infty}}^{p-1}\|d\zeta\|_{L^{\infty}}^{2-p}.
\end{equation}
\end{prop}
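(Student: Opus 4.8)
Away from $Sing(u)=spt(T(u))$ the one-form $ju$ is smooth and closed (by Corollary~\ref{wkpharmreg}), so $\langle T(u),\zeta\rangle=\int_M ju\wedge d\zeta$ is governed entirely by the behaviour of $ju$ in a thin tube around $Sing(u)$, and such tubes are small by Lemma~\ref{singvollem}. So I would integrate by parts after cutting off near $Sing(u)$ at a scale $r\in(0,1]$ to be optimized at the end. Concretely: let $\mathcal{N}_\rho:=\mathcal{N}_\rho(Sing(u)\cap B_2(x))$ and pick $\chi_r\in C_c^\infty(B_2(x))$ with $\chi_r\equiv0$ on $\mathcal{N}_r$, $\chi_r\equiv1$ off $\mathcal{N}_{2r}$, and $|d\chi_r|\le C/r$. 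Since $\chi_r\zeta$ is supported in $B_2(x)\setminus\mathcal{N}_r\subset M\setminus Sing(u)$, where $ju$ is smooth and closed, Stokes' theorem on the closed manifold $M$ gives $\langle T(u),\chi_r\zeta\rangle=\int_M ju\wedge d(\chi_r\zeta)=0$; hence
\[
\langle T(u),\zeta\rangle=\langle T(u),(1-\chi_r)\zeta\rangle=-\int_{B_2(x)}ju\wedge d\chi_r\wedge\zeta+\int_{B_2(x)}(1-\chi_r)\,ju\wedge d\zeta,
\]
and both terms are supported in $\mathcal{N}_{2r}$, so $|\langle T(u),\zeta\rangle|\le\big(\tfrac{C}{r}\|\zeta\|_{L^\infty}+\|d\zeta\|_{L^\infty}\big)\int_{\mathcal{N}_{2r}}|ju|\,dv_g$.

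\textbf{The tube integral and the choice of $r$.} Next I would estimate $\int_{\mathcal{N}_{2r}}|ju|$ by Hölder's inequality together with the volume bound of Lemma~\ref{singvollem}, $\mathrm{Vol}(\mathcal{N}_{2r})\le C(k,n)\Lambda r^p$, to obtain a bound of the shape $\int_{\mathcal{N}_{2r}}|ju|\le C(k,n)\Lambda\,r^{p-1}$; substituting back gives, for all $r\in(0,1]$,
\[
|\langle T(u),\zeta\rangle|\ \le\ C(k,n)\,\Lambda\,\big(\|d\zeta\|_{L^\infty}\,r^{p-1}+\|\zeta\|_{L^\infty}\,r^{p-2}\big).
\]
By homogeneity of the asserted inequality in $\zeta$ we may take $\|d\zeta\|_{L^\infty}=1$ (the case $d\zeta\equiv0$ being trivial). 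If $\|\zeta\|_{L^\infty}\le1$, choosing $r=\|\zeta\|_{L^\infty}$ makes the two terms equal and yields $|\langle T(u),\zeta\rangle|\le C\Lambda\|\zeta\|_{L^\infty}^{p-1}$; if $\|\zeta\|_{L^\infty}>1$, the direct estimate $|\langle T(u),\zeta\rangle|\le\|d\zeta\|_{L^\infty}\int_{B_2(x)}|ju|\le C\Lambda$ together with $\|\zeta\|_{L^\infty}^{p-1}\ge1$ again gives the claim (here $\int_{B_2(x)}|ju|\le C(k,n)\Lambda$ follows in the same way from Hölder and Lemma~\ref{singvollem}). In either case $|\langle T(u),\zeta\rangle|\le C(k,n)\Lambda\|\zeta\|_{L^\infty}^{p-1}\|d\zeta\|_{L^\infty}^{2-p}$, with all constants bounded for $p\in[3/2,2)$.

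\textbf{The main difficulty.} The cutoff, the integration by parts, and the optimization in $r$ are routine; the real content is in the two tube/$L^1$ estimates for $|ju|$, and specifically in making their constants \emph{independent of $p$} as $p\uparrow2$. That the volume bound $\mathrm{Vol}(\mathcal{N}_{2r})\le C\Lambda r^p$ is $p$-uniform is precisely where Proposition~\ref{sharpdensbd} enters: the $(2-p)^{-1}$ from the energy growth $E_p=O((2-p)^{-1})$ cancels against the $(2-p)^{-1}$ in the sharp density lower bound when one runs the Vitali argument of Lemma~\ref{singvollem}. The more delicate point is that a crude application of Hölder gives $\int_{\mathcal{N}_{2r}}|ju|\le\|ju\|_{L^p}\,\mathrm{Vol}(\mathcal{N}_{2r})^{1/p'}$ with $\|ju\|_{L^p}=E_p^{1/p}$ still carrying a factor $(2-p)^{-1/p}$; I expect the correct argument to replace the global energy $E_p$ here by the energy of $ju$ on a shell of bounded logarithmic width around $Sing(u)$, which — because blow-ups of $u$ at points of $Sing(u)$ are radially homogeneous, so the radial part of the energy controlled by the monotonicity formula of Lemma~\ref{mono} does not blow up — is of size $O(\Lambda r^{n-p})$ rather than $O(\Lambda/(2-p))$. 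Carrying out this refinement carefully, and thereby getting a $p$-uniform constant in the tube estimate, is the step I would expect to be the main obstacle.
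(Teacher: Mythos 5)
Your outer framework — cut off near $Sing(u)$, integrate by parts, estimate the tube integral, and optimize the cutoff scale — is the same as the paper's, and the identity $\langle T(u),\zeta\rangle=\langle T(u),(1-\chi_r)\zeta\rangle$ is correct. But the proof has a genuine gap, which you yourself flag and do not close: the bound $\int_{\mathcal{N}_{2r}}|ju|\le C\Lambda\,r^{p-1}$ with a $p$-uniform constant is never established. The Hölder estimate via $\|ju\|_{L^p(B_3)}\le (\Lambda/(2-p))^{1/p}$ and $\mathrm{Vol}(\mathcal{N}_{2r})\le C\Lambda r^p$ only yields $\int_{\mathcal{N}_{2r}}|ju|\le C\Lambda (2-p)^{-1/p}r^{p-1}$, and after optimizing $r$ one is left with the extraneous factor $(2-p)^{-1/p}\to\infty$, so the argument as written does not give the proposition. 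Your proposed fix (a shell of ``bounded logarithmic width'' controlled by radial homogeneity of blow-ups) is a heuristic, not a proof, and is not actually the mechanism that works here.

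The auxiliary claim $\int_{B_2(x)}|ju|\le C(k,n)\Lambda$ is also false: the same example of a constant-gradient phase $u=e^{iNx^1}$ with $N\sim (2-p)^{-1/p}$ satisfies $E_p\le \Lambda/(2-p)$ but has $\|du\|_{L^1}\sim (2-p)^{-1/p}$, so the $L^1$ norm of $du$ on $B_2$ is not uniformly bounded under the stated hypotheses. Hence your treatment of the case $\|\zeta\|_{L^\infty}>1$ breaks down as well.

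What the paper does instead, and what you are missing, is a dyadic pigeonhole: with $K=\lfloor 1/(2-p)\rfloor$ shells $U(2^{1-j}\beta)\setminus U(2^{-j}\beta)$, at least one carries $L^p$ energy $\le E_p(u)/K\le \Lambda/(p-1)$, hence $p$-uniformly bounded. Crucially, this pigeonhole only rescues the \emph{gradient-of-cutoff} term, which is supported on that annulus. The $d\zeta$ term is supported on the whole tube $U(2s)$ and still carries a $(2-p)^{-1/p}$ factor; this is handled separately, because that term has an extra power of $s$, and the residual $(2-p)^{-1/p}$ is absorbed in the final optimization over $\beta$ using the elementary fact that $(2-p)^{p-2}$ is uniformly bounded for $p\in(1,2)$. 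Your version lumps both terms into a single $\int_{\mathcal{N}_{2r}}|ju|$, which forecloses this two-pronged treatment; separating the annular (gradient-of-cutoff) contribution from the full-tube ($d\zeta$) contribution is essential, and without the pigeonhole on the former the constants are not $p$-uniform.
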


\begin{proof} Fix $\beta\in (0,1)$ and set $K:=\lfloor \frac{1}{2-p}\rfloor \geq \frac{p-1}{2-p}$. Setting $U(r):=\mathcal{N}_r(Sing(u)\cap B_2(x))$, we begin with the simple estimate
\begin{eqnarray*}
E_p(u,B_2(x))&\geq &\int_{U(\beta)\setminus U(2^{-K}\beta)}|du|^p\\
&=&\Sigma_{j=1}^K\int_{U(2^{1-j}\beta)\setminus U(2^{-j}\beta)}|du|^p,
\end{eqnarray*}
from which it follows that
\begin{equation}
\int_{U(2^{1-j}\beta)\setminus U(2^{-j}\beta)}|du|^p\leq \frac{E_p(u)}{K}.
\end{equation}
for some $j\in \{1,\ldots,K\}$. In particular, there is some scale $s=2^{-j}\beta\in [2^{\frac{-1}{2-p}}\beta,\frac{1}{2}\beta]$ for which
\begin{equation}\label{uannest}
\int_{U(2s)\setminus U(s)}|du|^p\leq \frac{E_p(u)}{K}\leq \frac{\Lambda}{2-p}\cdot\frac{2-p}{p-1}=\frac{\Lambda}{p-1}.
\end{equation}

\hspace{3mm} Now, let $\psi(y)=\eta(dist(y,Sing(u)))$, where $\eta$ is given by
$$\eta\equiv 1\text{ on }[0,s],\text{ }\eta(t)=1-\frac{1}{s}(t-s)\text{ for }t\in [s,2s],\text{ and }\eta\equiv 0\text{ on }[2s,\infty),$$
so that $\psi\equiv 1$ on $U(s)\supset spt(T(u))$ and $\psi \equiv 0$ outside $U(2s)$.
For any $\zeta\in \Omega_c^{n-2}(B_2(x))$, we then have
\begin{eqnarray*}
\langle T(u),\zeta\rangle&=&\langle T(u),\psi\zeta\rangle\\
&=&\int ju\wedge d\psi\wedge \zeta+ ju\wedge \psi d\zeta\\
&\leq & \|d\psi\|_{L^{\infty}}\|ju\|_{L^p(U(2s)\setminus U(s))}\|\zeta\|_{L^{p'}(U(2s))}\\
&&+\|ju\|_{L^p(U(2s))}\|d\zeta\|_{L^{p'}(U(2s))},
\end{eqnarray*}
where $p'=\frac{p}{p-1}$. Next, we note that $\|d\psi\|_{L^{\infty}}=\frac{1}{s}$, while 
$$\|ju\|_{L^p(U(2s))}\leq E_p(u,B_2(x))^{1/p}\leq (2-p)^{-1/p}\Lambda^{1/p},$$
and, by (\ref{uannest}),
$$\|ju\|_{L^p(U(2s)\setminus U(s))}\leq \frac{\Lambda^{1/p}}{(p-1)^{1/p}};$$
using all of this in the preceding estimate, we then obtain
\begin{equation}\label{pretest}
\langle T(u),\zeta\rangle\leq \frac{1}{s}\frac{\Lambda^{1/p}}{(p-1)^{1/p}}\|\zeta\|_{L^{p'}(U(2s))}+(2-p)^{-1/p}\Lambda^{1/p}\|d\zeta\|_{L^{p'}(U(2s))}.
\end{equation}

Now, by Lemma \ref{singvollem}, we see that
$$\|\zeta\|_{L^{p'}(U(2s))}\leq \|\zeta\|_{L^{\infty}}Vol(U(2s))^{1-1/p}\leq C(k,n)\Lambda^{1-1/p}s^{p-1}\|\zeta\|_{L^{\infty}}$$
and 
$$\|d\zeta\|_{L^{p'}(U(2s))}\leq C(k,n)\Lambda^{1-1/p}s^{p-1}\|d\zeta\|_{L^{\infty}},$$
which we use in (\ref{pretest}) to obtain
\begin{equation}
\langle T(u),\zeta\rangle \leq \frac{C\Lambda s^{p-2}}{(p-1)^{1/p}}\|\zeta\|_{L^{\infty}}+\frac{C\Lambda s^{p-1}}{(2-p)^{1/p}}\|d\zeta\|_{L^{\infty}}.
\end{equation}
Recalling that $s$ lies in the interval $2^{\frac{-1}{2-p}}\beta\leq s\leq \frac{1}{2}\beta$, it then follows that 
\begin{equation}\label{arbbetaeq}
\langle T(u),\zeta\rangle \leq C(k,n)\Lambda\left(\frac{\beta^{p-2}\|\zeta\|_{L^{\infty}}}{(p-1)^{1/p}}+\frac{\beta^{p-1}\|d\zeta\|_{L^{\infty}}}{(2-p)^{1/p}}\right).
\end{equation}

\hspace{3mm} Finally, we observe that $\beta\in (0,1)$ was arbitrary, so we can choose, for instance
$$\beta(p,\zeta)=\frac{(2-p)^{1+1/p}}{(p-1)^{1+1/p}}\frac{\|\zeta\|_{L^{\infty}}}{\|d\zeta\|_{L^{\infty}}}.$$
If $\beta(p,\zeta)\geq 1,$ then 
$$\|d\zeta\|_{L^{\infty}}\leq \frac{(2-p)^{1+1/p}}{(p-1)^{1+1/p}}\|\zeta\|_{L^{\infty}},$$
and the desired estimate holds trivially. Otherwise, we have $\beta(p,\zeta)\in (0,1)$, so we can plug $\beta=\beta(p,\zeta)$ into (\ref{arbbetaeq}), and using the fact that $(2-p)^{p-2}$ is uniformly bounded for $p\in (1,2)$, we arrive an estimate of the desired form
\begin{equation}
\langle T(u),\zeta\rangle \leq C(k,n)\Lambda \|\zeta\|_{L^{\infty}}^{p-1}\|d\zeta\|_{L^{\infty}}^{2-p}.
\end{equation}

\end{proof}

\hspace{3mm} By rescaling the result of Proposition \ref{tubds1}, we obtain the following statement at arbitrary small scales $0<r\leq 1$:

\begin{cor}\label{tubdsscaled} For $0<r\leq 1$, let $u\in W^{1,p}(B_{3r}(x),S^1)$ be a stationary $p$-harmonic map with 
$$\theta_p(u,x,3r)\leq \frac{\Lambda}{2-p}.$$
Then for every $\zeta\in \Omega_c^{n-2}(B_{2r}(x))$, we have
\begin{equation}\label{scaledtubdest}
\langle T(u),\zeta\rangle \leq C(k,n)\Lambda r^{n-p}\|\zeta\|^{p-1}_{L^{\infty}}\|d\zeta\|^{2-p}_{L^{\infty}}.
\end{equation}
\end{cor}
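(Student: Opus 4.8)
The plan is to reduce the statement to Proposition \ref{tubds1} by a rescaling argument. First I would introduce the blow-up map $\tilde u(y):=u(\exp_x(ry))$ on the ball $B_3\subset\R^n$ equipped with the rescaled metric $g_r(y):=r^{-2}([\exp_x]^*g)(ry)$. As recalled in Section \ref{pstatcons}, $\tilde u$ is stationary $p$-harmonic with respect to $g_r$, and $(B_3,g_r)$ is again a geodesic ball with $|sec(g_r)|=r^2|sec(g)|\leq r^2k\leq k$ and injectivity radius $\geq 3$ (using $r\leq 1$), so the hypotheses of Lemma \ref{singvollem} and Proposition \ref{tubds1} are satisfied. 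Moreover, changing variables via $\Phi_r(y):=\exp_x(ry)$ and using that $dv_{g_r}=r^{-n}\,dv_{\Phi_r^*g}$ while $|d\tilde u|_{g_r}=r\,|d\tilde u|_{\Phi_r^*g}=r\,(|du|_g\circ\Phi_r)$, one computes $E_p(\tilde u,B_3,g_r)=r^{p-n}\int_{B_{3r}(x)}|du|^p=3^{n-p}\,\theta_p(u,x,3r)\leq \frac{3^n\Lambda}{2-p}$. Hence $\tilde u$ satisfies the energy bound of Proposition \ref{tubds1} with $\Lambda$ replaced by the dimensional multiple $3^n\Lambda$.

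Next, given $\zeta\in\Omega_c^{n-2}(B_{2r}(x))$, I would set $\tilde\zeta:=\Phi_r^*\zeta\in\Omega_c^{n-2}(B_2)$. Since $j(u\circ\Phi_r)=\Phi_r^*(ju)$ and $d\tilde\zeta=\Phi_r^*(d\zeta)$, naturality of the pullback gives $\langle T(\tilde u),\tilde\zeta\rangle=\int_{B_2}\Phi_r^*(ju\wedge d\zeta)=\int_{B_{2r}(x)}ju\wedge d\zeta=\langle T(u),\zeta\rangle$. The point requiring attention is how the relevant $L^\infty$ norms transform: writing $\hat g:=\Phi_r^*g$, we have $g_r=r^{-2}\hat g$, so on $k$-covectors $|\cdot|_{g_r}=r^k|\cdot|_{\hat g}$, while $\Phi_r$ is an isometry between $(B_2,\hat g)$ and $(B_{2r}(x),g)$; therefore $\|\tilde\zeta\|_{L^\infty(B_2,g_r)}=r^{n-2}\|\zeta\|_{L^\infty}$ and $\|d\tilde\zeta\|_{L^\infty(B_2,g_r)}=r^{n-1}\|d\zeta\|_{L^\infty}$.

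Finally I would apply Proposition \ref{tubds1} to $\tilde u$ and $\tilde\zeta$ and substitute these identities, obtaining $\langle T(u),\zeta\rangle=\langle T(\tilde u),\tilde\zeta\rangle\leq C(k,n)\Lambda\,(r^{n-2}\|\zeta\|_{L^\infty})^{p-1}(r^{n-1}\|d\zeta\|_{L^\infty})^{2-p}$; since $(n-2)(p-1)+(n-1)(2-p)=n-p$, this is exactly \eqref{scaledtubdest}. There is no genuine obstacle here—the argument is a routine scaling—so the only thing to be careful about is the bookkeeping of the metric-dependent pointwise norms and confirming that the constant $C(k,n)$ produced by Proposition \ref{tubds1} is genuinely $p$-independent and survives the rescaling, which it does because $k$ is unchanged and $\Lambda$ is only multiplied by a dimensional constant.
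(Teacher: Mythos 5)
Your proposal is correct and takes the same route the paper intends: the paper simply states that the corollary follows ``by rescaling the result of Proposition \ref{tubds1}'' and you have carried out that rescaling in full, with the right bookkeeping of the metric-dependent norms, the pullback of the distributional Jacobian, the curvature and injectivity-radius bounds under $g\mapsto r^{-2}g$, and the exponent identity $(n-2)(p-1)+(n-1)(2-p)=n-p$. The details check out (the constant picks up at most a factor $3^{n-p}\le 3^n$, absorbed into $C(k,n)$).
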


\hspace{3mm} In particular, by virtue of energy monotonicity (Lemma \ref{mono}), if $u\in W^{1,p}(B_3,S^1)$ satisfies $E_p(u,B_3)\leq \frac{\Lambda}{2-p}$, then (\ref{scaledtubdest}) holds for all $\zeta\in \Omega_c^{n-2}(B_r(y))$, for every ball $B_r(y)\subset B_2(x)$. In the appendix, we establish the following general lemma, which will imply that these estimates, together with the volume bounds of Lemma \ref{singvollem}, yield $p$-independent bounds for $\|T(u)\|_{W^{-1,q}}$ for any $q\in [1,p)$:

\begin{lem}\label{bigapplem} Let $S$ be an $(n-2)$-current in $W^{-1,p}(B_2(x))$ satisfying
\begin{equation}
\langle S,\zeta\rangle \leq A r^{n-p}\|\zeta\|_{L^{\infty}}^{p-1}\|d\zeta\|_{L^{\infty}}^{2-p}\text{\hspace{3mm}}\forall \zeta\in \Omega_c^{n-2}(B_r(y)),
\end{equation}
for every ball $B_r(y)\subset B_2(x)$. Suppose also that the $r$-tubular neighborhoods $\mathcal{N}_r(spt(S))$ about the support of $S$ satisfy
\begin{equation}
Vol(\mathcal{N}_r(B_2(x)\cap spt(S)))\leq Ar^p.
\end{equation}
Then there is a constant $C(n,k,A)$ such that for every $1\leq q<p$, we have
\begin{equation}
\|S\|_{W^{-1,q}(B_1(x))}\leq C(n,k,A)(p-q)^{-1/q}.
\end{equation}
\end{lem}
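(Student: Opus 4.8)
The plan is to bound $\langle S,\zeta\rangle$ for an arbitrary smooth $(n-2)$-form $\zeta$ compactly supported in $B_1(x)$ (these are dense in the predual $W^{1,q'}_0(B_1(x))$, $q'=\frac{q}{q-1}$) by $C(n,k,A)(p-q)^{-1/q}\|\zeta\|_{W^{1,q'}(B_2(x))}$. Both hypotheses enter through a dyadic covering: at scale $\rho$, the tubular–neighbourhood bound together with a Vitali argument shows that $\mathcal N_{2\rho}(spt(S))\cap B_2(x)$ is covered by $N\lesssim A\rho^{p-n}$ balls of radius $\sim\rho$ with bounded overlap, and the local estimate plays the role of ``$\varepsilon$-regularity at scale $\rho$'' on each such ball. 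The one structural fact I would isolate first is that $S$ is concentrated on $spt(S)$: by a partition of unity the local estimate gives $\langle S,\eta\rangle=0$ whenever $\operatorname{supp}\eta\cap spt(S)=\varnothing$, so $\langle S,\zeta\rangle=\langle S,\psi\zeta\rangle$ for every cutoff $\psi$ that equals $1$ on a neighbourhood of $spt(S)$.

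Next I would perform a Littlewood--Paley–type splitting in scale. Fixing a mollifier and writing $\zeta^{(j)}$ for the mollification of $\zeta$ at scale $2^{-j}$, set $w_j:=\zeta^{(j)}-\zeta^{(j-1)}$, so that $\zeta=\zeta^{(j_0)}+\sum_{j>j_0}w_j$ for a fixed coarse index $j_0$ (chosen so that all these forms are supported in $B_2(x)$). The coarse term $\langle S,\zeta^{(j_0)}\rangle=\langle S,\psi_{j_0}\zeta^{(j_0)}\rangle$ is handled by the same covering argument run at the single scale $2^{-j_0}$: since $\zeta^{(j_0)}$ is smooth at scale $2^{-j_0}$ one has $\|\zeta^{(j_0)}\|_{L^\infty}+2^{-j_0}\|d\zeta^{(j_0)}\|_{L^\infty}\lesssim_n\|\zeta\|_{L^{q'}(B_2)}$, and with $N_{j_0}\lesssim_n A$ this contributes a $q$-independent $C(n,A)\|\zeta\|_{W^{1,q'}(B_2)}$. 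For the dyadic pieces I would use $\langle S,w_j\rangle=\langle S,\psi_j w_j\rangle$ with $\psi_j$ equal to $1$ on $\mathcal N_{2^{-j}}(spt(S))$, supported in $\mathcal N_{2^{1-j}}(spt(S))$, $|d\psi_j|\lesssim 2^{j}$; then cover $\mathcal N_{2^{1-j}}(spt(S))\cap B_2$ by balls $B_i^{j}$ of radius $\sim 2^{-j}$ ($N_j\lesssim A2^{j(n-p)}$, bounded overlap), take a subordinate partition of unity $\{\chi_i^{j}\}$, and apply the local estimate to each $\chi_i^{j}\psi_j w_j$. The two elementary inputs are the Bernstein-type bounds for the mollification difference on comparable balls $B\subset B'$,
$$\|w_j\|_{L^\infty(B)}\lesssim 2^{-j(1-n/q')}\|d\zeta\|_{L^{q'}(B')},\qquad \|dw_j\|_{L^\infty(B)}\lesssim 2^{jn/q'}\|d\zeta\|_{L^{q'}(B')},$$
the first obtained by writing the mean-zero kernel $\phi_{2^{-j}}-\phi_{2^{1-j}}$ as $\operatorname{div}$ of a vector field of size $\sim 2^{-j}$. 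Feeding these into the local estimate, the powers of $2^{-j}$ collapse to exactly $2^{-j(n-1-n/q')}$ on each ball; summing over $i$ with Hölder and using $N_j\lesssim A2^{j(n-p)}$ gives, with $c_j:=\big(\sum_i\|d\zeta\|_{L^{q'}(B_i^{j})}^{q'}\big)^{1/q'}$,
$$|\langle S,w_j\rangle|\ \lesssim\ A^{1+1/q}\,2^{-j(p-q)/q}\,c_j.$$

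Finally I would sum over $j$. By Hölder in the scale index with exponents $(q,q')$,
$$\sum_{j>j_0}2^{-j(p-q)/q}c_j\ \le\ \Big(\sum_{j>j_0}2^{-j(p-q)}\Big)^{1/q}\Big(\sum_{j>j_0}c_j^{q'}\Big)^{1/q'}\ \lesssim\ (p-q)^{-1/q}\Big(\sum_{j>j_0}c_j^{q'}\Big)^{1/q'},$$
using $\sum_j 2^{-j(p-q)}=\frac{2^{-j_0(p-q)}}{1-2^{-(p-q)}}\lesssim (p-q)^{-1}$. I expect the main obstacle to be the last step, the bound $\sum_{j}c_j^{q'}\lesssim C(n)\|d\zeta\|_{L^{q'}(B_2)}^{q'}$: at face value $c_j^{q'}$ is the $L^{q'}$-mass of $d\zeta$ on the whole tube $\mathcal N_{\sim 2^{-j}}(spt(S))$, and summing in $j$ weights a point $y$ with multiplicity $\sim\log\frac1{\dist(y,spt(S))}$, which is not uniformly bounded. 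To remove this logarithm one must refine the decomposition so that scale $j$ only feels $d\zeta$ on a dyadic shell about $spt(S)$ of thickness $\sim 2^{-j}$ — e.g. by splitting $\psi_j w_j$ into a shell part and a deep part and redistributing the deep parts across finer scales (again using that $\langle S,\cdot\rangle$ annihilates forms supported off $spt(S)$), or via a Whitney-cube bookkeeping near $spt(S)$ — so that the sets underlying the $c_j$ have bounded overlap and $\sum_j c_j^{q'}\lesssim\|d\zeta\|_{L^{q'}(B_2)}^{q'}$. Granting this, the three estimates combine to $|\langle S,\zeta\rangle|\lesssim C(n,k,A)(p-q)^{-1/q}\|\zeta\|_{W^{1,q'}(B_2)}$, which is the assertion.
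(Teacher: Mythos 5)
Your route is genuinely different from the paper's. The paper (Lemma \ref{wgradestlem} and Corollary \ref{wneg1qbds} in the appendix) works component-by-component, introduces the Newtonian potential $w=G*(\chi f)$ of each scalar component $f$ of $S$, and performs the dyadic-in-scale decomposition on the \emph{kernel} $|x-y|^{-n}(x-y)_i$ rather than on the test form; the local estimate applied at each dyadic annulus about a fixed $x\notin spt(f)$ then gives the pointwise bound $|dw(x)|\leq C_nA\,\dist(x,spt(f))^{-1}$. The $W^{-1,q}$ bound follows by pairing $\langle f,\varphi\rangle=-\int\langle dw,d\varphi\rangle\leq\|dw\|_{L^q(B_1)}\|d\varphi\|_{L^{q'}}$ and computing $\int_{B_1}|dw|^q\leq C A^q\int_0^2 q\,r^{-q-1}Vol(\mathcal N_r(spt(f)))\,dr\leq CA^{q+1}(p-q)^{-1}$ by the layer-cake formula together with the tube-volume hypothesis. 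Your proposal instead dyadically decomposes $\zeta$ itself, covers each tube by comparable balls, and applies the local estimate to each piece. The two approaches are dual — both localize to a single scale, both count balls with Vitali, both invoke the tube-volume bound — but the paper's choice of putting the scale decomposition on the potential side means the final ``sum over scales'' is a single radial integral $\int_0^2 r^{p-q-1}dr$, where the shells are disjoint by construction.

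That structural advantage is precisely why the gap you flag is genuine and not merely a technicality you can wave through. In your formulation, $c_j^{q'}$ measures $\|d\zeta\|_{L^{q'}}^{q'}$ on the full tube $\mathcal N_{\sim 2^{-j}}(spt(S))$, and these tubes are nested rather than disjoint; a point at distance $2^{-k}$ from $spt(S)$ is counted $\sim k-j_0$ times, so $\sum_j c_j^{q'}$ cannot be bounded by $C(n)\|d\zeta\|_{L^{q'}}^{q'}$ without a refinement. The cutoff $\psi_j$ alone does not cure this: $\psi_j w_j$ still lives on the entire tube, not on a shell of thickness $\sim 2^{-j}$. Your proposed fix — re-slice each $\psi_jw_j$ into shell and deep parts and redistribute the deep parts to finer scales, or run a Whitney-cube bookkeeping near $spt(S)$ — is plausible, but it is not ``bookkeeping'' in the trivial sense: the redistributed pieces at scale $j'>j$ are differences of mollifications at the coarser scale $2^{-j}$, so their $L^\infty$ and Lipschitz norms do not a priori match the Bernstein bounds you used at scale $j'$, and this must be tracked carefully for the exponent arithmetic to still collapse to $2^{-j(p-q)/q}$. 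As written, the argument does not yet close.

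Two remarks that may help if you want to complete your version. First, the exponent you are chasing is sharper than you actually need: in the paper's application (Corollary \ref{tuwbds}) the conclusion is only used in the regime $p-q\gtrsim 2-q$, where $(p-q)^{-1}$ and $(p-q)^{-1/q}$ are comparable up to a $q$-dependent constant; so if you replace the scale-H\"older step by the crude bound $c_j\leq\|d\zeta\|_{L^{q'}(B_2)}$ and simply sum the geometric series $\sum_{j}2^{-j(p-q)/q}\lesssim (p-q)^{-1}$, you get a weaker but still usable estimate without confronting the overlap problem at all. Second, and more in the spirit of the paper, the cleanest repair is to switch sides as the paper does: the pointwise estimate $|dw(x)|\lesssim A\,\dist(x,spt(f))^{-1}$ packages the entire dyadic sum into a single weight whose sublevel sets are exactly the tubes appearing in the volume hypothesis, and the layer-cake integral $\int_0^2 r^{-q-1}Vol(\mathcal N_r)\,dr$ replaces your $\sum_j c_j^{q'}$ by a manifestly overlap-free quantity.
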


In particular, combining the results of Lemma \ref{singvollem} with Lemma \ref{mono} and Corollary \ref{tubdsscaled}, we find that

\begin{cor}\label{tuwbds} Let $u\in W^{1,p}(B_3(x),S^1)$ be a stationary $p$-harmonic map with 
$$E_p(u,B_3(x))\leq \frac{\Lambda}{2-p}.$$
Then for every $q\in [1,p)$, we have
\begin{equation}\label{tuwbdsest}
\|T(u)\|_{W^{-1,q}(B_1(x))}\leq C(n,k,\Lambda,q).
\end{equation}
\end{cor}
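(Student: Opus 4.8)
The plan is to derive Corollary \ref{tuwbds} directly from Lemma \ref{bigapplem}, applied to the $(n-2)$-current $S:=T(u)$ on the ball $B_2(x)$. The standing requirement $S\in W^{-1,p}(B_2(x))$ of that lemma is automatic: since $ju\in L^p$, we have $\|T(u)\|_{W^{-1,p}(B_2(x))}\leq\|du\|_{L^p(B_2(x))}<\infty$. So the work reduces to checking the two structural hypotheses of Lemma \ref{bigapplem} and keeping track of the constants.

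First I would verify the scale-invariant bound on $\langle S,\zeta\rangle$. Fix a ball $B_r(y)\subset B_2(x)$. If $3r\leq 1$, then $B_{3r}(y)\subset B_1(y)\subset B_3(x)$, and since $\theta_p(u,y,1)=\int_{B_1(y)}|du|^p\leq E_p(u,B_3(x))\leq\frac{\Lambda}{2-p}$, the monotonicity of $e^{Cs^2}\theta_p(u,y,s)$ from Lemma \ref{mono} gives $\theta_p(u,y,3r)\leq e^{C(n,k)}\frac{\Lambda}{2-p}$, whereupon Corollary \ref{tubdsscaled} yields
\[
\langle T(u),\zeta\rangle\leq C(n,k)\Lambda\, r^{n-p}\|\zeta\|_{L^\infty}^{p-1}\|d\zeta\|_{L^\infty}^{2-p}\qquad\text{for all }\zeta\in\Omega_c^{n-2}(B_r(y)).
\]
For the remaining radii $r$ of order $1$ with $B_r(y)\subset B_2(x)$, the same estimate follows by rerunning the cutoff argument in the proof of Proposition \ref{tubds1} at scale $r$ (the factor $r^{n-p}$ being in any case bounded below there). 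Hence the first hypothesis of Lemma \ref{bigapplem} holds with $A=C(n,k)\Lambda$, uniformly in $p$.

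Next, since $spt(T(u))=Sing(u)$, Lemma \ref{singvollem} gives precisely
\[
Vol(\mathcal{N}_r(B_2(x)\cap spt(T(u))))\leq C(n,k)\Lambda\, r^p\qquad(0<r\leq 1),
\]
which is the second hypothesis of Lemma \ref{bigapplem}, again with $A=C(n,k)\Lambda$. Applying Lemma \ref{bigapplem} with this ($p$-independent) value of $A$ then gives, for every $q\in[1,p)$,
\[
\|T(u)\|_{W^{-1,q}(B_1(x))}\leq C(n,k,\Lambda)\,(p-q)^{-1/q},
\]
and since $q<p$ is fixed (with $p\in[3/2,2)$ throughout this section), the right-hand side is a finite constant $C(n,k,\Lambda,q)$, which is (\ref{tuwbdsest}).

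There is no genuinely hard step here, as Corollary \ref{tubdsscaled}, Lemma \ref{singvollem}, and Lemma \ref{bigapplem} are all already in hand; the only point needing a little care is arranging the first hypothesis of Lemma \ref{bigapplem} to hold over \emph{all} balls $B_r(y)\subset B_2(x)$ rather than just the small ones, which is where energy monotonicity — and, at scales of order $1$, a repetition of the Proposition \ref{tubds1} argument — enters.
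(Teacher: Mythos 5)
Your reduction to Lemma \ref{bigapplem} is the paper's own route, and your verification of its two hypotheses (the scale-invariant bound on $\langle T(u),\zeta\rangle$ via Corollary \ref{tubdsscaled} plus energy monotonicity, and the tubular-neighborhood volume bound via Lemma \ref{singvollem}) is correct and matches the paper. The gap is in your last sentence. Lemma \ref{bigapplem} gives
\[
\|T(u)\|_{W^{-1,q}(B_1(x))}\leq C(n,k,\Lambda)\,(p-q)^{-1/q},
\]
and you dismiss the factor $(p-q)^{-1/q}$ as ``a finite constant $C(n,k,\Lambda,q)$'' on the grounds that $q<p$ is fixed. But the point of (\ref{tuwbdsest}) is that the constant must be independent of $p$: it is invoked in the proof of Proposition \ref{hodgeglob} applied to a sequence $u_i$ with $p_i\to 2$, so a bound that degenerates in $p$ is useless. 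And $(p-q)^{-1/q}$ \emph{does} degenerate: with $q$ fixed it blows up as $p\downarrow q$, which is an allowed regime in the hypotheses ($q\in[1,p)$ puts no lower bound on $p-q$).

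The paper closes this gap by splitting into two cases. If $p>1+\frac{q}{2}$, then $p-q>\frac{2-q}{2}$, so $(p-q)^{-1/q}\leq\bigl(\frac{2-q}{2}\bigr)^{-1/q}$ is a bona fide constant depending only on $q$, and the Lemma \ref{bigapplem} bound suffices. If instead $p\leq 1+\frac{q}{2}$, then $2-p\geq\frac{2-q}{2}$ is bounded away from $0$, and one bypasses Lemma \ref{bigapplem} entirely, using the trivial estimate
\[
\|T(u)\|_{W^{-1,q}(B_1(x))}\leq\|T(u)\|_{W^{-1,p}(B_1(x))}\leq\|du\|_{L^p}\leq\frac{\Lambda^{1/p}}{(2-p)^{1/p}}\leq\Lambda^{1/p}\Bigl(\frac{2}{2-q}\Bigr)^{1/p},
\]
which is again controlled by $n,k,\Lambda,q$ alone. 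You need one of these two mechanisms (or an equivalent) whenever $p$ is close to $q$; without it the proof does not establish the $p$-uniformity that the corollary asserts and that the rest of the paper relies on.
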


\begin{proof} By the preceding discussion, we see that
$$\|T(u)\|_{W^{-1,q}(B_1(x))}\leq C(n,k,\Lambda)(p-q)^{-1/q}.$$
To put this in the form (\ref{tuwbdsest}), we simply separate into two cases: if $p\leq 1+\frac{q}{2}$, then $2-p\geq \frac{2-q}{2}$, and (\ref{tuwbdsest}) follows from the trivial estimate
$$\|T(u)\|_{W^{-1,p}}\leq \|du\|_{L^p}\leq \frac{\Lambda^{1/p}}{(2-p)^{1/p}}.$$
On the other hand, if $p> 1+\frac{q}{2}$, then $p-q>\frac{2-q}{2}$, and so
$$\|T(u)\|_{W^{-1,q}(B_1(x))}\leq C(n,k,\Lambda)(p-q)^{-1/q}\leq C(n,k,\Lambda,q),$$
as claimed.
\end{proof}

\begin{remark} Given the existence of estimates--like those of \cite{JS1}--which bound in certain weak norms the Jacobians of arbitrary maps in $W^{1,2}(M,\mathbb{C})$ by the normalized Ginzburg-Landau energies $\frac{E_{\epsilon}}{|\log\epsilon|}$, it is natural to ask whether results along the lines of Proposition \ref{tubds1} or Corollary \ref{tuwbds} can be obtained for arbitrary maps $u\in W^{1,p}(M,S^1).$ If so, such estimates could be of use for the study of variational problems in $W^{1,p}(M,S^1)$, just as the results of \cite{JS1} have been in the Ginzburg-Landau setting (as in, e.g., \cite{BOSp}, \cite{Ste2}).
\end{remark}

\end{subsection}

\section{Estimates For the Hodge Decomposition of $ju$}\label{hodgeests}

\hspace{3mm} Now, let $M^n$ again be an arbitrary compact, oriented Riemannian manifold, and for $p\in [3/2,2)$, let $u\in W^{1,p}(M,S^1)$ be a stationary $p$-harmonic map with
\begin{equation}
E_p(u)\leq \frac{\Lambda}{2-p}.
\end{equation}
In our analysis of the global behavior of $u$, just as in the Ginzburg-Landau setting, the Hodge decomposition
\begin{equation}\label{hodge}
ju=d\varphi+d^*\xi+h
\end{equation}
of $ju$ plays a central role. (For more on Hodge decomposition in the space of $L^p$ differential forms, we refer the reader to \cite{Scott}.) Here, $\varphi\in W^{1,p}(M,\mathbb{R})$ is the function given by
$$\varphi:=\Delta^{-1}(div(ju)),$$
$\xi$ is the $W^{1,p}$ two-form
$$\xi:=*\Delta_H^{-1}T(u),$$
and $h$ is the remaining harmonic one-form, which we can write as
$$h:=\Sigma_{i=1}^k\left(\int_M\langle h_i, ju\rangle\right) h_i$$
with respect to an $L^2$-orthonormal basis $\{h_i\}_{i=1}^k$ for the space $\mathcal{H}^1(M)$ of harmonic one-forms. We remark that, in our notation, $\Delta$ denotes the negative spectrum scalar Laplacian, but $\Delta_H=dd^*+d^*d$ is the usual positive spectrum Hodge Laplacian.

\hspace{3mm} Our first goal in this section is to establish estimates which show that, for each $q\in [1,2)$, and any sequence $u_p$ as in Theorem \ref{mainthm1}, the coexact component $d^*\xi$ remains bounded and the exact component $d\varphi$ vanishes in $L^q$ as $p\to 2$. The second is to show that the same behavior holds in stronger norms away from the singular sets.

\hspace{3mm} For the harmonic form $h$, we need only the trivial $L^{\infty}$ estimate
\begin{equation}
\|h\|_{L^{\infty}(M)}\leq C(M)\|du\|_{L^1(M)}\leq C(M)\frac{\Lambda^{1/p}}{(2-p)^{1/p}}.
\end{equation}
For the exact and co-exact terms $d\varphi$ and $d^*\xi$, we begin by establishing the following global estimates:

\begin{prop}\label{hodgeglob} If $q\in [1,p)$, then
\begin{equation}\label{xiglob}
\|\xi\|_{W^{1,q}}\leq C(M,\Lambda,q)
\end{equation}
and
\begin{equation}\label{phiglob}
\|\varphi\|_{W^{1,q}}\leq C(M,\Lambda,q)(2-p)^{1-1/p}|\log(2-p)|
\end{equation}
for some $C(M,\Lambda,q)<\infty$ independent of $p$.
\end{prop}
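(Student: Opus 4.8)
The plan is to estimate the three pieces of the Hodge decomposition separately, handling $\xi=*\Delta_H^{-1}T(u)$ and $\varphi=\Delta^{-1}(div(ju))$ by $L^q$ elliptic regularity for the equations $\Delta_H\xi=*T(u)$ and $\Delta\varphi=div(ju)$. After multiplying the metric by a small constant depending only on $M$ (which changes $\Lambda$ only by a factor depending on $M$), we may assume the sectional curvature and injectivity bounds of Section \ref{bigests}. It suffices to treat $q\in(1,p)$: for $q=1$, H\"{o}lder's inequality on the compact manifold $M$ reduces the claim to the fixed exponent $q_0=\tfrac54\in(1,\tfrac32)\subset(1,p)$.

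For the coexact term, the first step is to globalize Corollary \ref{tuwbds} to the bound $\|T(u)\|_{W^{-1,q}(M)}\le C(M,\Lambda,q)$. Covering $M$ by finitely many geodesic balls $B_1(x_1),\dots,B_1(x_N)$ with $B_3(x_i)\subset M$ and fixing a subordinate partition of unity $\{\rho_i\}$, one writes $\langle T(u),\zeta\rangle=\sum_i\langle T(u),\rho_i\zeta\rangle$ for any test $(n-2)$-form $\zeta$ and applies Corollary \ref{tuwbds} on each $B_1(x_i)$ together with the trivial bound $\|\rho_i\zeta\|_{W^{1,q'}(B_1(x_i))}\le C(M)\|\zeta\|_{W^{1,q'}(M)}$. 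Since the current $T(u)$ is exact it is $L^2$-orthogonal to the harmonic forms, so $\Delta_H^{-1}T(u)$ is well defined, and the Calder\'{o}n--Zygmund estimate for $\Delta_H$ on the closed manifold $M$ gives $\|\xi\|_{W^{1,q}(M)}=\|\Delta_H^{-1}T(u)\|_{W^{1,q}(M)}\le C(M,q)\|T(u)\|_{W^{-1,q}(M)}\le C(M,\Lambda,q)$, proving \eqref{xiglob}.

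For the exact term, the crucial point is that weak $p$-harmonicity \eqref{weakpharm3} says exactly that $|ju|^{p-2}ju$ is divergence-free, so, testing against $\psi\in W^{1,q'}(M)\subset W^{1,p}(M)$, one has $div(ju)=div\big((1-|ju|^{p-2})ju\big)$ as elements of $W^{-1,q}(M)$; hence by elliptic regularity $\|\varphi\|_{W^{1,q}}\le C(M,q)\|(1-|ju|^{p-2})ju\|_{L^q(M)}$. Writing $f:=|du|=|ju|$ and $g(t):=|t-t^{p-1}|=t\,|1-t^{p-2}|$, the inequality $1-e^{-s}\le s$ gives the pointwise bounds $g(t)\le(2-p)\,t^{p-1}|\log t|\le\tfrac{2-p}{e(p-1)}$ for $0<t\le1$ and $g(t)\le\min\{t,\ (2-p)\,t\log t\}$ for $t\ge1$. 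I would then decompose $M$ according to the size of $f$ into $\{f<1\}$, $\{1\le f\le(2-p)^{-1}\}$, $\{(2-p)^{-1}<f\le e^{1/(2-p)}\}$ and $\{f>e^{1/(2-p)}\}$, and bound $\int g(f)^q$ on each using only these pointwise estimates and $\int_M f^p\le\Lambda/(2-p)$: on the first region one gets $O((2-p)^q)$; on the second, H\"{o}lder's inequality gives $O\big((2-p)^{q(1-1/p)}|\log(2-p)|^q\big)$; on the third, using that $t\mapsto(\log t)^q t^{q-p}$ is decreasing for $t>(2-p)^{-1}$ once $p$ is close to $2$, one gets $O\big(\Lambda(2-p)^{p-1}|\log(2-p)|^q\big)$, controlled by the second since $p-1\ge q(1-1/p)$; and the last region contributes at most $e^{-(p-q)/(2-p)}\Lambda/(2-p)$, which is negligible. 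Summing and taking $q$-th roots yields \eqref{phiglob}; for $p$ bounded away from $2$ the same bound is immediate from $\|\varphi\|_{W^{1,q}}\le C(M,q)\|du\|_{L^p}$.

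The main obstacle is the $\varphi$ estimate — specifically, extracting the sharp power $(2-p)^{1-1/p}$ together with the logarithmic factor. Cruder splittings of $M$, or a cruder bound on the middle region $\{(2-p)^{-1}<f\le e^{1/(2-p)}\}$, produce powers of $2-p$ that are too large (even a constant when $q=1$), so the thresholds $1$, $(2-p)^{-1}$, $e^{1/(2-p)}$ and the monotonicity of $(\log t)^q t^{q-p}$ must be exploited essentially as above. What makes the estimate possible at all is the identity $div(ju)=div\big((1-|ju|^{p-2})ju\big)$, which trades the field $ju$ — large in $L^p$ — for the field $(1-|ju|^{p-2})ju$, which is small in $L^q$.
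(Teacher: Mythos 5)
Your proof is correct and follows essentially the same route as the paper: for $\xi$ you globalize the $W^{-1,q}$ bound on $T(u)$ from Corollary~\ref{tuwbds} and apply $L^q$ elliptic regularity, and for $\varphi$ you exploit the identity $div(ju)=div\big((1-|ju|^{p-2})ju\big)$ coming from weak $p$-harmonicity and then estimate $\|(1-|du|^{p-2})du\|_{L^q}$ by a H\"{o}lder/Chebyshev region decomposition built on the pointwise bound $1-t^{p-2}\leq(2-p)\log t$. The only difference is in the bookkeeping of that last step: the paper splits $M$ into three regions with a single $p$- and $q$-dependent threshold $\lambda=(2-p)^{-1/p-q/(p-q)}$ and then absorbs the resulting $(p-q)^{-q}$ factor by a case split on $p$, whereas you use the explicit thresholds $1$, $(2-p)^{-1}$, $e^{1/(2-p)}$ together with the monotonicity of $t\mapsto(\log t)^qt^{q-p}$ --- a modest variant that arrives at the same sharp rate.
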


\begin{proof} First, note that we can apply Corollary \ref{tuwbds} (after some fixed rescaling) to a finite covering of $M$ by geodesic balls, to obtain the $W^{-1,q}$ estimate
\begin{equation}
\|T(u)\|_{W^{-1,q}(M)}\leq C(M,\Lambda,q)
\end{equation}
for the distributional Jacobian $T(u)$. Since $\xi:=*\Delta_H^{-1}T(u)$ by definition, it follows from the $L^q$ regularity of $\Delta_H$ that
$$\|\xi\|_{W^{1,q}}\leq C(M,q)\|T(u)\|_{W^{-1,q}}\leq C(M,\Lambda,q),$$
as desired.

\hspace{3mm} To estimate $d\varphi$, we begin by observing that since $u$ is weakly $p$-harmonic, the distributional divergence $$div(|ju|^{p-2}ju)$$ vanishes, and $\varphi$ can therefore be recast as
$$\varphi:=\Delta^{-1}(div(ju-|ju|^{p-2}ju)).$$
The $L^q$ regularity of the Laplacian then gives
\begin{equation}
\|\varphi\|_{W^{1,q}}\leq C(q,M)\|(1-|ju|^{p-2})ju\|_{L^q},
\end{equation}
so it is enough produce an $L^q$ estimate of the desired form for $(1-|ju|^{p-2})ju$.

\hspace{3mm} To this end, we write
\begin{eqnarray*}
\|(1-|ju|^{p-2})ju\|_{L^q}^q&=&\int ||du|-|du|^{p-1}|^q\\
&=&\int_{\{|du|\leq 1\}}(|du|^{p-1}-|du|)^q\\
&&+\int_{\{|du|\geq 1\}}(|du|-|du|^{p-1})^q.
\end{eqnarray*}
It's easy to check that
$$\max_{t\in [0,1]}(t^{p-1}-t)=(2-p)(p-1)^{\frac{p-1}{2-p}},$$
so the $\{|du|\leq 1\}$ portion of the integral satisfies 
$$\int_{\{|du|\leq 1\}}(|du|^{p-1}-|du|)^q\leq (2-p)^q(p-1)^{\frac{q(p-1)}{2-p}}Vol(M).$$

\hspace{3mm} To estimate the $\{|du|\geq 1\}$ portion of the integral, observe that  
$$1-t^{p-2}\leq (2-p)\log(t)$$
when $t\geq 1$, so fixing some $\lambda>1$, we split the integral again to see that
\begin{eqnarray*}
\int_{\{|du|\geq 1\}}(|du|-|du|^{p-1})^q&\leq &\int_{\{1\leq |du|\leq \lambda\}}(2-p)^q\log(|du|)^q|du|^q\\
&&+\int_{\{|du|\geq \lambda\}}|du|^q\\
&\leq &(2-p)^q\log(\lambda)^q\int_{\{1\leq |du|\leq \lambda\}}|du|^q\\
&&+\|du\|_{L^p}^{q/p}Vol(\{|du|\geq \lambda\})^{1-q/p}\\
&\leq &(2-p)^q\log(\lambda)^qC(M)\|du\|_{L^p}^q+\|du\|_{L^p}^p\lambda^{q-p},
\end{eqnarray*}
and since $\|du\|_{L^p}\leq \Lambda^{1/p}(2-p)^{-1/p}$, this yields
$$\int_{\{|du|\geq 1\}}(|du|-|du|^{p-1})^q\leq C(M)\Lambda^{q/p}(2-p)^{q-q/p}\log(\lambda)^q+\Lambda (2-p)^{-1}\lambda^{q-p}.$$
Taking $\lambda=(2-p)^{-\frac{1}{p}-\frac{q}{p-q}},$ we observe that
$$(2-p)^{-1}\lambda^{q-p}=(2-p)^{q-q/p}$$
and 
$$\log(\lambda)=(\frac{1}{p}+\frac{q}{p-q})|\log(2-p)|,$$
so putting this together with the preceding inequalities, we arrive at the estimate
\begin{equation}\label{fofdulq}
\int ||du|-|du|^{p-1}|^q\leq \frac{C(M,\Lambda)}{(p-q)^q}(2-p)^{q-q/p}|\log(2-p)|^q.
\end{equation}
Considering separately the cases $p>1+\frac{q}{2}$ and $p\leq 1+\frac{q}{2}$ as in the proof of Corollary \ref{tuwbds}, and recalling that 
$$\|d\varphi\|_{L^q}\leq C(M,q)\||du|-|du|^{p-1}\|_{L^q},$$
we arrive at an estimate of the desired form (\ref{phiglob}).

\end{proof}

\hspace{3mm} Next, we establish estimates resembling (\ref{xiglob}) and (\ref{phiglob}) in $W^{1,2}$ norms away from the singular set $Sing(u)$. The simple estimates of Lemma \ref{offsingest} below by no means represent the optimal bounds of this kind, but they will suffice for the purposes of this paper.

\begin{lem}\label{offsingest} Suppose now that $p\in [\max\{q_n,3/2\},2)$, where $q_n=\frac{2n}{n+2}$ (so that $W^{1,q_n}\hookrightarrow L^2$ by Sobolev embedding). Letting $r(x):=dist(x,Sing(u))$, we have the $L^2$ estimates
\begin{equation}\label{coexl2}
\|r(x)d^*\xi\|_{L^2(M)}\leq C(M,\Lambda)
\end{equation}
and
\begin{equation}\label{exactl2}
\|r(x)d\varphi\|_{L^2(M)}\leq C(M,\Lambda)(2-p)^{1-1/p}|\log(2-p)|
\end{equation}
\end{lem}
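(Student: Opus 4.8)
The plan is to reduce everything to weighted $L^2$ estimates for the Hodge components, exploiting the fact that away from $Sing(u)$ the map $u$ lifts and the interior regularity of Corollary \ref{wkpharmreg} applies. The key structural observation is that, by the dyadic-annulus covering philosophy already used in the proof of Proposition \ref{tubds1}, the weight $r(x)=\dist(x,Sing(u))$ compensates exactly for the loss of control near the singular set, and the volume bound of Lemma \ref{singvollem}, $Vol(\mathcal N_r(Sing(u)))\leq C\Lambda r^p$, is what makes the weighted sums converge.

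First I would treat the coexact term $d^*\xi$. On the complement of a fixed tubular neighborhood we already have the global $W^{1,q}$ bound (\ref{xiglob}) and, since $\xi=*\Delta_H^{-1}T(u)$ is harmonic (i.e.\ $\Delta_H\xi$ is supported on $Sing(u)$) away from $Sing(u)$, interior elliptic estimates for $\Delta_H$ on a ball $B_{r(x)/2}(x)\subset M\setminus Sing(u)$ upgrade the $W^{1,q}$ bound to $|d^*\xi(x)|\leq C r(x)^{-n/q}\|\xi\|_{W^{1,q}(B_{r(x)}(x))}$, hence (after using $W^{1,q_n}\hookrightarrow L^2$ and interpolating) a pointwise bound $|d^*\xi(x)|^2 \lesssim r(x)^{-2}\,\Xint-_{B_{r(x)}(x)}|d^*\xi|^2$ type estimate, or more simply a Caccioppoli/Campanato-type decay. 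One then slices $M$ into the dyadic shells $\mathcal N_{2^{1-j}}(Sing(u))\setminus \mathcal N_{2^{-j}}(Sing(u))$ where $r(x)\sim 2^{-j}$, uses the volume bound to get $Vol$ of the $j$-th shell $\lesssim \Lambda 2^{-jp}$, and bounds $\int_{\text{shell }j} r^2|d^*\xi|^2 \lesssim 2^{-2j}\cdot(\text{bounded }L^2\text{-density})$; the geometric series $\sum_j 2^{-2j}$ converges, giving (\ref{coexl2}). I would fold the global tail $\|d^*\xi\|_{L^2(M\setminus\mathcal N_1)}\leq C$ into the $j=0$ term.

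For the exact term $d\varphi$, the argument runs in parallel but now I keep track of the small factor. The key point is that where $u$ lifts, $\varphi$ differs from a $p$-harmonic function by an explicit term, so I would use the identity $\|d\varphi\|_{L^q}\leq C\|\,|du|-|du|^{p-1}\,\|_{L^q}$ together with the interior $L^\infty$ and Hessian bounds of Corollary \ref{wkpharmreg} on balls $B_{r(x)/2}(x)$ to bootstrap to $L^2$ locally, pick up the same factor $(2-p)^{1-1/p}|\log(2-p)|$ that appeared in (\ref{phiglob}) (via the pointwise estimate $1-t^{p-2}\leq (2-p)\log t$ for $t\geq 1$ and $\max_{[0,1]}(t^{p-1}-t)=(2-p)(p-1)^{(p-1)/(2-p)}$), and then run the identical dyadic-shell summation against the weight $r^2$ and the volume bound. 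The scale-invariant form of all the ingredients ensures the $r(x)$ weight lands precisely so that the shell sums telescope, yielding (\ref{exactl2}).

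The main obstacle I anticipate is the passage from the global $W^{1,q}$ (resp.\ $L^q$) control with $q<2$ to genuinely $L^2$ control at the scale $r(x)$: this requires interior regularity estimates that are uniform in $p\uparrow 2$ and that degenerate no worse than a fixed negative power of $r(x)$, so that after multiplying by $r^2$ and integrating against the $r^p$ volume growth one still gets a convergent (geometric) sum. For $d^*\xi$ this is clean because $\xi$ solves a genuinely elliptic (constant-coefficient principal part) equation off $Sing(u)$; for $d\varphi$ one must instead lean on the $p$-independent constants in the DiBenedetto--Lewis theory as packaged in Proposition \ref{pharmfcnreg} and Corollary \ref{wkpharmreg}, and be slightly careful that the local lifting $\varphi$ is only defined on simply connected pieces of $M\setminus Sing(u)$ — but since $d\varphi$ is globally defined this causes no real trouble. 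A secondary technical point is that $r(x)$ is only Lipschitz, so when using it as a cutoff weight one works with dyadic truncations $\min\{r,2^{-j}\}$ rather than $r$ itself, exactly as in the proof of Proposition \ref{tubds1}.
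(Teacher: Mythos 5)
Your approach is genuinely different from the paper's, and while the overall idea (dyadic shells around $Sing(u)$, interior elliptic regularity off the singular set, the $Vol(\mathcal N_r)\lesssim r^p$ bound to make the sum converge) is plausible in spirit for the coexact term, the argument as written contains a real gap, and the exact term is not actually argued.

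The paper's route is shorter and sidesteps the interior-regularity machinery entirely. For (\ref{coexl2}) it tests $\psi_\delta^2 d^*\xi$ against $d^*\xi$ where $\psi_\delta=\max\{0,r(x)-\delta\}$; since $d(d^*\xi)=T(u)$ is supported on $Sing(u)$, the integration by parts lands on $\xi$ (not $d^*\xi$), and after Cauchy--Schwarz and absorbing one is left with $\|\psi_\delta d^*\xi\|_{L^2}\leq 2\|\xi\|_{L^2}$, which the Sobolev embedding $W^{1,q_n}\hookrightarrow L^2$ plus (\ref{xiglob}) controls outright. No dyadics, no pointwise estimates for $d^*\xi$. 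For (\ref{exactl2}) it uses the weak PDE $\Delta\varphi=\operatorname{div}((1-|ju|^{p-2})ju)$ in the same way, together with the pointwise gradient bound $r(x)^p|du|^p\leq C\Lambda/(2-p)$ from Corollary \ref{wkpharmreg} and monotonicity; the resulting $\log r(x)$ factor is then tamed by the $L^{p,\infty}$ bound on $r^{-1}$ from Lemma \ref{singvollem}. The key structural fact---that $\psi_\delta\leq r(x)$, $|\nabla\psi_\delta|\leq 1$, and both $dd^*\xi$ and the source term for $\Delta\varphi$ are well behaved on $spt(\psi_\delta)$---is what lets everything close in one pass.

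The gap in your version: you assert $\int_{\mathrm{shell}_j}r^2|d^*\xi|^2\lesssim 2^{-2j}\cdot(\text{bounded }L^2\text{-density})$ and then sum $\sum_j 2^{-2j}$. But the $L^2$ density of $d^*\xi$ on shell $j$ is \emph{not} bounded uniformly in $j$; only $L^q$ with $q<p<2$ is controlled globally. To make the dyadic argument honest you would have to interpolate, e.g.\ $\int_{A_j}|d^*\xi|^2\leq\|d^*\xi\|^{2-q}_{L^\infty(A_j)}\int_{A_j}|d^*\xi|^q$, bound $\|d^*\xi\|_{L^\infty(A_j)}\lesssim 2^{jn/q}\|d^*\xi\|_{L^q}$ by the mean-value inequality (valid here because $d^*\xi$ is $\Delta_H$-harmonic off $Sing(u)$), and then verify that the resulting exponent $(2-q)n/q-2$ is $\leq 0$, which forces $q\geq q_n$---exactly the threshold the statement imposes. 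Only then do the shell-localized $L^q$ masses $\int_{A_j}|d^*\xi|^q$ sum to $\|d^*\xi\|_{L^q}^q\leq C$. As written, your geometric series does not close. For $d\varphi$ the sketch is not an argument: $d\varphi$ is not $\Delta_H$-harmonic off $Sing(u)$, so the same interior-regularity step does not apply, and you never explain how the factor $(2-p)^{1-1/p}|\log(2-p)|$ actually survives multiplication by $r^2$ and integration; in particular the term $\int_M\log(r(x))^2$, which in the paper's proof is the place where the $Vol(\mathcal N_r)\lesssim r^p$ estimate is genuinely needed, never appears in your sketch.
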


\begin{proof} For $\delta>0$, let $\psi_{\delta}(x)\in Lip(M)$ be given by
$$\psi_{\delta}(x)=\max\{0,r(x)-\delta\},$$
so that $\psi_{\delta}\equiv 0$ on a neighborhood of $Sing(u)=spt(T(u))$, and $Lip(\psi_{\delta})\leq 1.$
Then $d^*\xi$ is closed on the support of $\psi_{\delta}$, and it follows that
\begin{eqnarray*}
\int \psi_{\delta}^2|d^*\xi|^2&=&\int \langle d^*\xi,\psi_{\delta}^2d^*\xi\rangle\\
&=&\int \langle \xi,2\psi_{\delta}d\psi_{\delta}\wedge d^*\xi\rangle\\
&\leq &2\int |\psi_{\delta}d^*\xi||\xi|.
\end{eqnarray*}
Now, since $p>q_n$, we have by Sobolev embedding and Proposition \ref{hodgeglob} the estimate
$$\|\xi\|_{L^2}\leq C(M)\|\xi\|_{W^{1,q_n}}\leq C(M,\Lambda);$$
applying this in the preceding inequality, it follows that
$$\int \psi_{\delta}^2|d^*\xi|^2\leq C(M,\Lambda)^2.$$
Taking $\delta\to 0$ and appealing to the monotone convergence theorem, we arrive at (\ref{coexl2}).

\hspace{3mm} For (\ref{exactl2}), we proceed similarly: with $\psi_{\delta}$ defined as above, we use the equation
$$\Delta\varphi=div((1-|ju|^{p-2})ju)$$
(and the fact that $Lip(\psi_{\delta})\leq 1$) to estimate
\begin{eqnarray*}
\int \psi_{\delta}^2|d\varphi|^2&=&\int \langle d\varphi, d(\psi_{\delta}^2\varphi)\rangle-2\langle \psi_{\delta}d\varphi,\varphi d\psi_{\delta}\rangle\\
&=&\int \langle (1-|ju|^{p-2})ju, d(\psi_{\delta}^2\varphi)\rangle\\
&&-2\int \langle \psi_{\delta}d\varphi,\varphi d\psi_{\delta}\rangle\\
&\leq & \|\psi_{\delta}(1-|ju|^{p-2})ju\|_{L^2}(\|\psi_{\delta}d\varphi\|_{L^2}+\|\varphi\|_{L^2})\\
&&+2\|\psi_{\delta}d\varphi\|_{L^2}\|\varphi\|_{L^2}.
\end{eqnarray*}
With a few applications of Young's inequality, it then follows that
\begin{equation}\label{psidphi1}
\|\psi_{\delta}d\varphi\|_{L^2}^2\leq 10(\|\psi_{\delta}(1-|du|^{p-2})du\|_{L^2}^2+\|\varphi\|_{L^2}^2)
\end{equation}
Now, by Sobolev embedding and Proposition \ref{hodgeglob}, we know that
\begin{equation}\label{obvphil2}
\|\varphi\|_{L^2}^2\leq C(M)\|\varphi\|_{W^{1,q_n}}^2\leq C(M,\Lambda)(2-p)^{2-2/p}|\log(2-p)|^2,
\end{equation}
so all that remains is to estimate $\|\psi_{\delta}(1-|du|^{p-2})du\|_{L^2}$.

\hspace{3mm} To this end, observe that the gradient estimate of Corollary \ref{wkpharmreg} implies
$$r(x)^p|du|^p(x)\leq \theta_p(u,x,r(x)),$$
which together with the monotonicity of $\theta_p(u,x,\cdot)$ yields the pointwise gradient estimate
$$r(x)^p|du|^p(x)\leq C(M)\frac{\Lambda}{2-p};$$
in particular, it follows that 
\begin{equation}\label{psigradest}
\psi_{\delta}^2|du|^2\leq C(M,\Lambda)(2-p)^{-2/p}.
\end{equation}
As in the proof of Proposition \ref{hodgeglob}, we note that $|(1-|du|^{p-2})du|\leq (2-p)$ when $|du|\leq 1$, so that
$$\int_{\{|du|\leq 1\}}\psi_{\delta}^2|(1-|du|^{p-2})du|^2\leq C(M)(2-p)^2.$$
Where $|du|\geq 1$, we can make repeated use of the pointwise estimate (\ref{psigradest}), together with the fact that 
$$1-|du|^{p-2}\leq (2-p)\log(|du|)$$
to find 
\begin{eqnarray*}
\int_{\{|du|\geq 1\}} \psi_{\delta}^2(1-|du|^{p-2})^2|du|^2&\leq &C(M,\Lambda)(2-p)^{-2/p}\int_{\{|du|\geq 1\}} (1-|du|^{p-2})^2\\
&\leq &C(M,\Lambda)(2-p)^{2-2/p}\int_{\{|du|\geq 1\}}\log(|du|)^2\\
&\leq & C(M,\Lambda)(2-p)^{2-2/p}\int \log\left(\frac{C(M,\Lambda)}{(2-p)^{1/p}r(x)}\right)^2.
\end{eqnarray*}
Splitting up the logarithm
$$\log\left(\frac{C(M,\Lambda)}{(2-p)^{1/p}r(x)}\right)=\log(C(M,\Lambda))+\frac{1}{p}|\log(2-p)|-\log(r(x)),$$
we then see that
\begin{eqnarray*}
\int_M \psi_{\delta}^2(1-|du|^{p-2})^2|du|^2&\leq & C(M,\Lambda)(2-p)^{2-2/p}|\log(2-p)|^2\\
&&+C(M,\Lambda)(2-p)^{2-2/p}\int_M \log(r(x))^2.
\end{eqnarray*}
Finally, by the volume estimates of Lemma \ref{singvollem}, we know that
$$\|r(x)^{-1}\|_{L^{p,\infty}}\leq C(M,\Lambda),$$
so that
$$\int_M \log(r(x))^2\leq C(M)\int_M r(x)^{-1}\leq C(M,\Lambda).$$
In particular, it then follows that
$$\|\psi_{\delta}(1-|du|^{p-2})du\|_{L^2}^2\leq C(M,\Lambda)(2-p)^{2-2/p}|\log(2-p)|^2,$$
which together with (\ref{psidphi1}) and (\ref{obvphil2}) gives
\begin{equation}
\|\psi_{\delta}d\varphi\|_{L^2}^2\leq C(M,\Lambda)(2-p)^{2-2/p}|\log(2-p)|^2.
\end{equation}
As before, we now take $\delta\to 0$ and appeal to the Monotone Convergence Theorem to arrive at the desired estimate (\ref{exactl2}).
\end{proof}

\section{Limiting Behavior of the $p$-Energy Measure}\label{sec5}

\subsection{Generalized Varifolds} \hfill

\hspace{3mm} Let $M^n$ be a compact Riemannian manifold. For $m\in \mathbb{N}$, denote by $A_m(M^n)$ the compact subbundle
\begin{equation}
A_m(M):=\{S\in End(TM)\mid S=S^*,\text{ }-n I\leq S\leq I,\text{ }tr(S)=m\}
\end{equation}
of $End(TM)$ consisting of symmetric endomorphisms with trace $m$ and eigenvalues lying in $[-n,1]$. In \cite{AS}, Ambrosio and Soner define the space $\mathcal{V}_m'(M)$ of \emph{generalized $m$-varifolds} to be the space of nonnegative Radon measures on $A_m(M)$. Note that $\mathcal{V}_m'(M)$ contains the standard $m$-varifolds--Radon measures on the Grassmannian bundle $G_m(M)$--since identifying subspaces with the associated orthogonal projections gives a natural inclusion $G_m(M)\hookrightarrow A_m(M)$.

\hspace{3mm} As with standard varifolds (see \cite{All},\cite{Simon} for an introduction), for any $V\in \mathcal{V}_m'$, we define the weight measure $\|V\|$ to be the pushforward $\pi_*V$ of $V$ under the projection $\pi: A_m(M)\to M$, and the first variation $\delta V$ to be the functional on $C^1$ vector fields given by
\begin{equation}\label{bddvar}
\delta V(X):=\int_{A_m(M)}\langle S, \nabla X\rangle dV(S).
\end{equation}
A classical result of Allard (see \cite{All}, Section 5) states that any (standard) $m$-varifold $V$ whose first variation $\delta V$ is bounded in the $(C^0)^*$ sense
$$\delta V(X)\leq C_V\|X\|_{C^0}$$
restricts to an $m$-rectifiable varifold on the set $\{x\mid \Theta^*_m(\|V\|,x)>0\}$ where its (upper-)$m$-dimensional density
$$\Theta_m^*(\|V\|,x):=\liminf_{r\to 0}\frac{\|V\|(B_r(x))}{\omega_mr^m}$$
is positive. In \cite{AS}, this result is extended to the setting of generalized varifolds as follows:

\begin{prop}\label{asprop}\emph{(\cite{AS})} Let $V\in \mathcal{V}_m'(M)$ be a generalized $m$-varifold with bounded first variation
\begin{equation}
\delta V(X)\leq C_V\|X\|_{C^0}
\end{equation}
and positive $m$-density
\begin{equation}
\Theta_m^*(\|V\|,x)>0\text{ for }\|V\|-a.e.\text{ }x\in M.
\end{equation}
Then there is an $m$-rectifiable (classical) varifold $\tilde{V}$ such that
\begin{equation}
\|\tilde{V}\|=\|V\|\text{ and }\delta \tilde{V}=\delta V.
\end{equation}
\end{prop}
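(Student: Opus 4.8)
This is the analogue for generalized varifolds of Allard's rectifiability theorem, and the plan is to reduce it to the classical statement by analyzing $V$ in the endomorphism fiber $A_m$. Disintegrate $V=\int_M V_x\,d\|V\|(x)$ over its weight, so that each $V_x$ is a probability measure on the fiber $A_m(T_xM)$, and set $\bar S(x):=\int S\,dV_x(S)$; convexity of $A_m(T_xM)$ gives $\bar S(x)\in A_m(T_xM)$, and \eqref{bddvar} becomes $\delta V(X)=\int_M\langle \bar S(x),\nabla X(x)\rangle\,d\|V\|(x)$, so that $\delta V$ depends on $V$ only through the pair $(\|V\|,\bar S)$. The proposition then follows once we establish: (i) $\|V\|$ is $m$-rectifiable, say $\|V\|=\theta\,\mathcal H^m$ on an $m$-rectifiable set $\Sigma$ with $\theta=\Theta_m(\|V\|,\cdot)$; and (ii) $\bar S(x)=P_{T_x\Sigma}$, the orthogonal projection onto the approximate tangent plane, for $\|V\|$-a.e.\ $x$. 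Indeed, the classical rectifiable varifold $\tilde V=\mathbf v(\Sigma,\theta)$ then satisfies $\|\tilde V\|=\|V\|$, and by (ii), $\delta\tilde V(X)=\int_\Sigma\langle P_{T_x\Sigma},\nabla X\rangle\,\theta\,d\mathcal H^m=\int_M\langle\bar S,\nabla X\rangle\,d\|V\|=\delta V(X)$.

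For (i), I would first derive a monotonicity formula for $\|V\|$. Testing \eqref{bddvar} against radial fields $X(y)=\gamma(\dist(x,y))(y-x)$ with $\gamma$ a smooth decreasing cutoff, the identity $\nabla X=\gamma\,\mathrm{Id}+\gamma'\,\dist(x,y)^{-1}(y-x)\otimes(y-x)$ together with the two defining constraints of $A_m$ --- $\operatorname{tr}S=m$ and $S\le\mathrm{Id}$, the latter giving $\langle S(y-x),y-x\rangle\le\dist(x,y)^2$ --- reproduces the computation used for classical varifolds, up to curvature errors of the type appearing in Lemma \ref{mono}. To turn this into genuine monotonicity $\|V\|$-a.e., apply the Besicovitch differentiation theorem to the pair $(\|\delta V\|,\|V\|)$: since $\|\delta V\|(M)\le C_V<\infty$, for $\|V\|$-a.e.\ $x$ there is $C_x<\infty$ with $\|\delta V\|(B_r(x))\le C_x\|V\|(B_r(x))$ for small $r$, and at such $x$ the usual argument yields that $e^{C(C_x+k)r}\,r^{-m}\|V\|(B_r(x))$ is monotone in $r$, with $k$ bounding the curvature of $M$. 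Hence $\Theta_m(\|V\|,x)=\lim_{r\to0}\omega_m^{-1}r^{-m}\|V\|(B_r(x))$ exists and is finite for $\|V\|$-a.e.\ $x$, and the hypothesis $\Theta_m^*(\|V\|,x)>0$ places it in $(0,\infty)$ there. Since a Radon measure with positive, finite $m$-density a.e.\ is $m$-rectifiable --- by Preiss's density theorem, or, closer to \cite{AS}, by the tangent-measure analysis underlying Allard's rectifiability theorem --- we conclude $\|V\|=\theta\,\mathcal H^m$ on an $m$-rectifiable $\Sigma$ with $\theta=\Theta_m(\|V\|,\cdot)$.

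For (ii), fix a point $x$ that is simultaneously a point of approximate tangency of $\Sigma$, an approximate-continuity point of both $\theta$ and $\bar S$ with respect to $\|V\|$, and a point where the monotonicity of step (i) holds; $\|V\|$-a.e.\ $x$ qualifies. Form the blow-ups $V_{x,r}$, obtained by pushing $V$ forward under $y\mapsto\exp_x^{-1}(y)/r$ (in normal coordinates the rescaled metrics flatten, as in the blow-up discussion of Section \ref{pstatcons}). From $\|\delta V_{x,r}\|(B_R)\le r^{1-m}\|\delta V\|(B_{rR}(x))\to 0$ --- using $\|\delta V\|(B_\rho(x))\le C_x\|V\|(B_\rho(x))=O(\rho^m)$ from step (i) --- a subsequence converges weakly-$*$ in $\mathcal V_m'$ to a limit $C$ with $\delta C=0$; since $A_m$ is compact, this convergence also carries the fiber, so $\bar S_{V_{x,r}}\,\|V_{x,r}\|\rightharpoonup\bar S_C\,\|C\|$ as matrix-valued measures, while rectifiability of $\Sigma$ and approximate continuity of $\theta$ give $\|C\|=\theta(x)\,\mathcal H^m$ on the plane $T_x\Sigma$. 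Here is the one genuinely new step: since $\delta C(X)=\int\langle\bar S_C(y),\nabla X(y)\rangle\,d\|C\|(y)=0$ for all $X$ and $\|C\|$ is carried by the plane, integrating by parts along $T_x\Sigma$ and testing against fields whose normal derivatives along the plane may be prescribed at will forces every tangent-normal block entry of $\bar S_C$ to vanish $\mathcal H^m$-a.e., leaving only the $T_x\Sigma$-block $A$; but $\operatorname{tr}A=\operatorname{tr}\bar S_C=m$ and $A\le\mathrm{Id}$ then force $A=\mathrm{Id}$, i.e.\ $\bar S_C\equiv P_{T_x\Sigma}$. Finally, at an approximate-continuity point of $\bar S$ the blow-ups $\bar S_{V_{x,r}}$ converge to the constant $\bar S(x)$ in the $\|V_{x,r}\|$-averaged $L^1$ sense, so the matrix-valued limit is also $\bar S(x)\,\|C\|$; comparing, $\bar S(x)=P_{T_x\Sigma}$, which is (ii).

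The bulk of the work --- and the main obstacle --- is step (ii): one must carefully justify that blow-ups of a generalized varifold with bounded first variation subconverge, fiber and all, to a \emph{stationary} generalized varifold whose weight is the flat measure on the approximate tangent plane (this is exactly where the rectifiability of $\|V\|$ from step (i) enters), and then run the rigidity $\delta C=0\Rightarrow\bar S_C=P_{T_x\Sigma}$, which --- unlike its classical counterpart, where the relevant matrices are orthogonal projections by construction --- genuinely uses the structure of $A_m$, in particular $\operatorname{tr}S=m$ and $S\le\mathrm{Id}$. Transporting the identification back from $\bar S_C$ to $\bar S(x)$ is the remaining delicate point, and is where the $\|V\|$-a.e.\ approximate continuity of the bounded field $\bar S$ is precisely what is needed. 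Step (i) itself leans on the (deep) rectifiability criterion for measures of positive finite density --- equivalently, on Allard's theorem --- which I would invoke rather than reprove.
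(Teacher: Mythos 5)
This proposition is cited from Ambrosio--Soner \cite{AS}; the paper supplies no proof of its own, so there is no internal argument to compare against. Judged on its merits, your reconstruction is correct in outline and captures the essential mechanism of the \cite{AS} result: reduce to the barycenter field $\bar S$ (so that $\delta V$ sees only $(\|V\|,\bar S)$); derive almost-monotonicity from the two structural constraints $\operatorname{tr} S=m$ and $S\le I$, together with a Besicovitch differentiation of $\|\delta V\|$ against $\|V\|$ giving $\|\delta V\|(B_r(x))\le C_x\|V\|(B_r(x))$ for small $r$ at $\|V\|$-a.e.\ $x$; blow up to a stationary generalized varifold $C$ carried by the approximate tangent plane; and run the rigidity argument --- vector fields vanishing on the plane with freely prescribed normal derivative force the $P^\perp\!$-blocks of $\bar S_C$ to vanish, after which $\operatorname{tr}=m$ and $\le I$ pin the tangential block to the identity --- transporting the conclusion back to $\bar S(x)$ through $L^1$ approximate continuity. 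The one place you shortcut the route in \cite{AS} is step (i): you invoke Preiss's density theorem to pass from ``positive finite density a.e.'' to rectifiability of $\|V\|$, whereas Ambrosio--Soner instead adapt Allard's self-contained blow-up and tangent-measure argument, which yields rectifiability and the identification $\bar S(x)=P_{T_x\Sigma}$ simultaneously, without Preiss as a black box. Your two-step split is tidier to state, at the cost of a much heavier input; it is not wrong. One point deserving explicitness: the hypothesis gives only positivity of the liminf density, so you should say that the almost-monotonicity of step (i) is what upgrades this to existence of the full limit $\Theta_m(\|V\|,x)$, with finiteness coming from evaluating the monotone quantity at a fixed positive radius, before Preiss can be applied.
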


In \cite{AS}, this result was originally used to study concentration of energy for solutions of the parabolic Ginzburg-Landau equations, as a means for constructing codimension-two Brakke flows. In the proof of Theorem \ref{mainthm1}, we will use it similarly, to show that the concentrated part of $\mu$ is given by the weight measure of a stationary, rectifiable $(n-2)$-varifold.

\subsection{Proof of Theorems \ref{mainthm1} and \ref{mapconvthm}}
\hfill

\hspace{3mm} As in Theorem \ref{mainthm1}, let $M$ be a compact, oriented Riemannian manifold, let $p_i\in (1,2)$ with $\lim_{i\to\infty}p_i=2$, and let $u_i\in W^{1,p_i}(M,S^1)$ be a sequence of stationary $p_i$-harmonic maps satisfying
\begin{equation}
\Lambda:=\sup_{i\in \mathbb{N}}\int_M(2-p_i)|du_i|^{p_i}dv_g<\infty.
\end{equation}
Passing to a subsequence, we can assume also that the $p_i$-energy measures
$$\mu_i:=(2-p_i)|du_i|^{p_i}dv_g$$
converge in $(C^0)^*$ to a limiting measure $\mu$, and that the singular sets $Sing(u_i)$ converge in the Hausdorff metric to a limiting set
$$\Sigma=\lim_{i\to\infty}Sing(u_i).$$

\hspace{3mm} Now, for each $i$, consider as in Section \ref{hodgeests} the Hodge decomposition
$$ju_i=d^*\xi_i+d\varphi_i+h_i$$
of $ju_i$, and set $\alpha_i:=d^*\xi_i+d\varphi_i$. We associate to $ju_i$, $\alpha_i,$ and $h_i$, the following $L^1$ sections of $End(TM)$:
$$S_i:=|du_i|^{p_i-2}du_i^*du_i=|ju_i|^{p_i-2}ju_i\otimes ju_i,$$
$$S_i^s:=|\alpha_i|^{p_i-2}\alpha_i\otimes \alpha_i,$$
and
$$S_i^h:=|h_i|^{p_i-2}h_i\otimes h_i.$$
As we shall see, the proof of Theorem \ref{mainthm1} rests largely on the following simple claim:
\begin{claim} 
\begin{equation}\label{claimstat}
\lim_{i\to\infty}(2-p_i)\|S_i-S_i^s-S_i^h\|_{L^1}=0.
\end{equation}
\end{claim}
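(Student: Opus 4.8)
The plan is to show that the three endomorphism fields $S_i$, $S_i^s$, $S_i^h$, which are built from the pieces of the Hodge decomposition $ju_i = \alpha_i + h_i$ (where $\alpha_i = d^*\xi_i + d\varphi_i$), differ by a quantity that is negligible after multiplying by $(2-p_i)$. The key point is that $ju_i = \alpha_i + h_i$ is an orthogonal decomposition pointwise-in-$L^2$, so the cross terms $\alpha_i \otimes h_i$ are small precisely where both pieces are small, and we already have strong control: by the trivial estimate $\|h_i\|_{L^\infty} \leq C(M)\Lambda^{1/p_i}(2-p_i)^{-1/p_i}$, by Proposition \ref{hodgeglob} the bound $\|\alpha_i\|_{L^q} \leq C(M,\Lambda,q)$ for every fixed $q < 2$ (since $\xi_i$ is bounded in $W^{1,q}$ and $d\varphi_i \to 0$), and by Lemma \ref{offsingest} the bound $\|r(x)\alpha_i\|_{L^2} \leq C(M,\Lambda)$ away from $Sing(u_i)$. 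The strategy is to reduce the algebraic difference $S_i - S_i^s - S_i^h$ to terms involving products of $\alpha_i$ and $h_i$ (and error terms coming from the nonlinearity $t \mapsto t^{p_i-2}t$), estimate each in $L^1$, and multiply by $(2-p_i)$.

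First I would carry out the pointwise algebra. Writing $a = \alpha_i(x)$, $b = h_i(x)$, $v = a+b = ju_i(x)$, we have $S_i = |v|^{p_i-2} v \otimes v$, $S_i^s = |a|^{p_i-2} a\otimes a$, $S_i^h = |b|^{p_i-2} b \otimes b$. Expanding $v\otimes v = a\otimes a + a\otimes b + b\otimes a + b\otimes b$, the difference $S_i - S_i^s - S_i^h$ naturally splits into (i) ``cross'' contributions $|v|^{p_i-2}(a\otimes b + b\otimes a)$, and (ii) ``nonlinearity defect'' contributions $(|v|^{p_i-2} - |a|^{p_i-2})\, a\otimes a + (|v|^{p_i-2} - |b|^{p_i-2})\, b\otimes b$. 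For the mean value estimate on $|v|^{p_i-2} - |a|^{p_i-2}$ I would use that the function $t \mapsto t^{p_i-2}$ has derivative of size $(2-p_i) t^{p_i-3}$, so that $||v|^{p_i-2} - |a|^{p_i-2}| \lesssim (2-p_i)(\min\{|a|,|v|\})^{p_i-3}|b|$ together with the elementary bound $||v|^{p_i-2} - |a|^{p_i-2}| \leq |a|^{p_i-2} + |v|^{p_i-2}$ when the ratio $|b|/|a|$ is not small; in all cases the defect terms are controlled by $C(2-p_i)(|a|+|b|)^{p_i-1}|b| + C\min\{|a|,|b|\}^{p_i-1}(|a|+|b|)$, and similarly the cross terms are bounded by $C|v|^{p_i-1}\min\{|a|,|b|\}\leq C(|a|+|b|)^{p_i-1}\min\{|a|,|b|\}$. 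So up to fixed constants everything is dominated by
\[
|S_i - S_i^s - S_i^h| \leq C(2-p_i)(|\alpha_i| + |h_i|)^{p_i-1}|h_i| + C\big(|\alpha_i|^{p_i-1}|h_i| + |h_i|^{p_i-1}|\alpha_i|\big).
\]

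Next I would integrate and multiply by $(2-p_i)$. For the first term on the right, $(2-p_i)^2 \int (|\alpha_i|+|h_i|)^{p_i-1}|h_i|$: split $|h_i| = |h_i|^{2/p_i}|h_i|^{1-2/p_i}$ and use $\|h_i\|_{L^\infty}^{1-2/p_i} \leq C(2-p_i)^{(1-2/p_i)(-1/p_i)}$, together with $(2-p_i)|du_i|^{p_i}$ having bounded mass to control $(2-p_i)\int(|\alpha_i|+|h_i|)^{p_i} \lesssim (2-p_i)\int|du_i|^{p_i} + (2-p_i)\int|h_i|^{p_i} \leq C$ — here I'd use that $|\alpha_i| \leq |ju_i| + |h_i| = |du_i| + |h_i|$ pointwise and $(2-p_i)\int|h_i|^{p_i} \leq C\Lambda$ from the $L^\infty$ bound and $Vol(M)$ finite. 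So this term contributes $C(2-p_i)^{2 - (2-p_i)/p_i \cdot \text{something}} \to 0$; one must just bookkeep the exponent of $(2-p_i)$ and check it is positive, which it is since $2 - 2/p_i \to 1 > 0$. For the cross terms $(2-p_i)\int |\alpha_i|^{p_i-1}|h_i|$: pull out $\|h_i\|_{L^\infty} \leq C(2-p_i)^{-1/p_i}$, leaving $(2-p_i)^{1-1/p_i}\int|\alpha_i|^{p_i-1}$, and bound $\int|\alpha_i|^{p_i-1} \leq \|\alpha_i\|_{L^{p_i-1+\delta}}^{p_i-1}Vol(M)^{\text{exponent}}$ using Proposition \ref{hodgeglob} with $q = p_i - 1 + \delta < p_i$ (for $p_i$ close to $2$ this $q$ is close to $1$, comfortably below $p_i$), so $\int|\alpha_i|^{p_i-1} \leq C(M,\Lambda)$ uniformly, and $(2-p_i)^{1-1/p_i} \to 0$. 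The term $(2-p_i)\int|h_i|^{p_i-1}|\alpha_i|$ is handled the same way: $\|h_i\|_{L^\infty}^{p_i-1} \leq C(2-p_i)^{-(p_i-1)/p_i}$, leaving $(2-p_i)^{1/p_i}\int|\alpha_i| \to 0$ since $\|\alpha_i\|_{L^1} \leq C$. Summing all pieces gives $(2-p_i)\|S_i - S_i^s - S_i^h\|_{L^1} \to 0$.

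The main obstacle I anticipate is purely bookkeeping: keeping track of the competing powers of $(2-p_i)$ and confirming every exponent that appears is strictly positive (or vanishes only logarithmically, in which case the $|\log(2-p)|$ factors from Proposition \ref{hodgeglob} would still be beaten by a positive power of $(2-p)$). A secondary subtlety is that the cross-term estimate ``$|v|^{p_i-1}\min\{|a|,|b|\}$'' must be justified carefully when $|a|$ and $|b|$ are comparable — in that regime one simply bounds $|v|^{p_i-2}|a\otimes b| \leq |v|^{p_i-2}(|a|^2 + |b|^2)/2 \lesssim (|a|+|b|)^{p_i}$, which is then absorbed into the $(2-p_i)$-bounded energy; so no single pointwise bound suffices and one argues by cases on the ratio $|h_i|/|\alpha_i|$. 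Neither difficulty is serious, and the whole claim follows from the uniform Hodge estimates of Section \ref{hodgeests} combined with the trivial $L^\infty$ bound on $h_i$ and the energy hypothesis.
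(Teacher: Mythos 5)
Your overall strategy is sound and your stated pointwise bound
\[
|S_i-S_i^s-S_i^h|\;\lesssim\;(2-p_i)(|\alpha_i|+|h_i|)^{p_i-1}|h_i| + |\alpha_i|^{p_i-1}|h_i|+|h_i|^{p_i-1}|\alpha_i|
\]
is in fact correct, and your integration step is correct and genuinely \emph{simpler} than the paper's (it only invokes the $L^\infty$ bound for $h_i$ and the global $L^q$ estimates of Proposition~\ref{hodgeglob}, whereas the paper splits $M$ into $\mathcal{N}_\delta(Sing(u_i))$ and its complement and crucially uses the weighted $L^2$ estimate of Lemma~\ref{offsingest} on the far region). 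But the \emph{derivation} of your pointwise bound has a genuine gap.

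The gap is in the cancellation regime, i.e.\ where $\alpha_i\approx -h_i$ so that $v=ju_i=\alpha_i+h_i$ is small while $|\alpha_i|,|h_i|$ are large. When you expand $S_i=|v|^{p-2}(a\otimes a+a\otimes b+b\otimes a+b\otimes b)$ and group into ``cross terms'' $|v|^{p-2}(a\otimes b+b\otimes a)$ and ``nonlinearity defects'' $(|v|^{p-2}-|a|^{p-2})a\otimes a+(|v|^{p-2}-|b|^{p-2})b\otimes b$, both groups individually contain the factor $|v|^{p-2}$, which blows up as $|v|\to 0$ (since $p<2$). These blow-ups cancel between the two groups, so bounding them separately loses the claim. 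Concretely, your cross-term bound $|v|^{p-2}|a||b|\leq|v|^{p-1}\min\{|a|,|b|\}$ requires $\max\{|a|,|b|\}\leq|v|$, which fails exactly in this regime; and your proposed repair, $|v|^{p-2}(|a|^2+|b|^2)\lesssim(|a|+|b|)^p$, would require $|v|\gtrsim|a|+|b|$ (again because $p-2<0$), which also fails. The mean value estimate $||v|^{p-2}-|a|^{p-2}|\lesssim(2-p)\min\{|a|,|v|\}^{p-3}|b|$ likewise degenerates when $|v|$ is small.

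The paper avoids this by applying the mean value theorem to the \emph{full} map $f(X)=|X|^{p-2}X\otimes X$ rather than to the scalar $t\mapsto t^{p-2}$: the key is that $|\nabla f(X)|\leq 3|X|^{p-1}$ with a \emph{positive} exponent $p-1$, so no singularity appears at $X=0$. This immediately gives $|f(X+Z)-f(X)|\leq 6(|X|^{p-1}+|Z|^{p-1})|Z|$, and writing $|S_i-S_i^s-S_i^h|\leq\min\{|S_i-S_i^h|+|S_i^s|,\ |S_i-S_i^s|+|S_i^h|\}$ then gives exactly your claimed bound (indeed without the $(2-p_i)$ term). With that substitution your proof closes; alternatively one can salvage the expansion by a separate case analysis on whether $|v|\gtrsim\max\{|a|,|b|\}$ (no cancellation, your estimates apply) or $|v|\ll\max\{|a|,|b|\}$ (cancellation, in which case necessarily $|a|\approx|b|$ and $|S_i-S_i^s-S_i^h|\leq|S_i|+|S_i^s|+|S_i^h|\lesssim|a|^p+|b|^p\lesssim|a|^{p-1}|b|$), but the paper's MVT route is cleaner.
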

\begin{proof}
Denoting by $f: T^*_xM\to End(T_xM)$ the function
$$f(X)=|X|^{p-2}X\otimes X$$
for $p\in (1,2)$, it is easy to check that 
$$|\nabla f(X)|\leq 3|X|^{p-1};$$ 
as an immediate consequence, we then have
\begin{eqnarray*}
|f(X+Z)-f(X)|&\leq &\int_0^13|X+tZ|^{p-1}|Z|dt\\
&\leq & 6(|X|^{p-1}+|Z|^{p-1})|Z|
\end{eqnarray*}
for any $X,Z\in T_pM$. In particular, since $S_i=f(ju_i)=f(\alpha_i+h_i)$, $S_i^s=f(\alpha_i)$, and $S_i^h=f(h_i)$ (with $p=p_i$) by definition, it follows that
$$|S_i-S_i^s|\leq 6(|du_i|^{p_i-1}+|h_i|^{p_i-1})|h_i|$$
and
$$|S_i-S_i^h|\leq 6(|du_i|^{p_i-1}+|\alpha_i|^{p_i-1}))|\alpha_i|.$$

\hspace{3mm} With this in mind, we estimate the $L^1$ norm of $S_i-S_i^s-S_i^h$ by splitting $M$ into $\mathcal{N}_{\delta}(Sing(u_i))$ and $M\setminus\mathcal{N}_{\delta}(Sing(u_i))$ for $\delta>0$ small, writing
\begin{eqnarray*}
\|S_i-S_i^s-S_i^h\|_{L^1(M)}&\leq & \int_{\mathcal{N}_{\delta}(Sing(u_i))}|S_i-S_i^s|+|S_i^h|\\
&&+\int_{M\setminus\mathcal{N}_{\delta}(Sing(u_i))}|S_i-S_i^h|+|S_i^s|\\
&\leq & \int_{\mathcal{N}_{\delta}(Sing(u_i))}(6|du_i|^{p_i-1}|h_i|+7|h_i|^{p_i})\\
&&+\int_{M\setminus \mathcal{N}_{\delta}(Sing(u_i))}(6|du_i|^{p_i-1}|\alpha_i|+7|\alpha_i|^{p_i-1}).
\end{eqnarray*}
Now, since
$$\|h_i\|_{L^{\infty}}^{p_i}\leq C(M)\frac{\Lambda}{2-p_i}$$
and, by Lemma \ref{singvollem},
$$Vol(\mathcal{N}_{\delta}(Sing(u_i)))\leq C(M,\Lambda)\delta^{p_i},$$
we have the simple estimate
\begin{equation}
\|h_i\|_{L^{p_i}(\mathcal{N}_{\delta}(Sing(u_i)))}^{p_i}\leq \frac{C(M,\Lambda)}{2-p_i}\delta^{p_i}.
\end{equation}
On the other hand, we know from Lemma \ref{offsingest} that
\begin{eqnarray*}
\int_{M\setminus\mathcal{N}_{\delta}(Sing(u_i))}|\alpha_i|^{p_i}&\leq & C(M)\left(\delta^{-2}\int_{M\setminus\mathcal{N}_{\delta}(Sing(u_i))}dist(x,Sing(u_i))^2|\alpha_i|^2\right)^{p_i/2}\\
&\leq & C(M,\Lambda)\delta^{-p_i}.
\end{eqnarray*}
Returning to our estimate for $\|S_i-S_i^s-S_i^h\|_{L^1(M)}$, it then follows that
\begin{eqnarray*}
\|S_i-S_i^s-S_i^h\|_{L^1(M)}&\leq &\int_{\mathcal{N}_{\delta}(Sing(u_i))}(6|du_i|^{p_i-1}|h_i|+7|h_i|^{p_i})\\
&&+\int_{M\setminus \mathcal{N}_{\delta}(Sing(u_i))}(6|du_i|^{p_i-1}|\alpha_i|+7|\alpha_i|^{p_i-1})\\
&\leq & C\|du_i\|_{L^{p_i}}^{p_i-1}\left(\|h_i\|_{L^{p_i}(\mathcal{N}_{\delta}(Sing(u_i)))}+\|\alpha_i\|_{L^{p_i}(M\setminus \mathcal{N}_{\delta}(Sing(u_i)))}\right)\\
&\leq & C(M,\Lambda)(2-p_i)^{1/p_i-1}(\delta (2-p_i)^{-1/p_i}+\delta^{-1}).
\end{eqnarray*}
Multiplying by $(2-p_i)$ and taking $i\to\infty$, we arrive at the bound
$$\limsup_{i\to\infty}(2-p_i)\|S_i-S_i^s-S_i^h\|_{L^1(M)}\leq C(M,\Lambda)\delta;$$
since $\delta>0$ was arbitrary, (\ref{claimstat}) follows.
\end{proof}

\hspace{3mm} With this claim established, we next observe that measures $\mu_i$ can be written as
$$\mu_i=(2-p_i)tr(S_i)dv_g,$$
and as a consequence of (\ref{claimstat}), we see that
\begin{eqnarray*}
\mu&=&\lim_{i\to\infty} (2-p_i)tr(S_i^s+S_i^h)dv_g\\
&=&\lim_{i\to\infty} [(2-p_i)|\alpha_i|^{p_i}dv_g+(2-p_i)|h_i|^{p_i}dv_g]\\
&=&\lim_{i\to\infty}[(2-p_i)|\alpha_i|^{p_i}dv_g+|\bar{h}_i|^{p_i}dv_g,
\end{eqnarray*}
where in the last line we've set
$$\bar{h}_i:=(2-p_i)^{-1/p_i}h_i.$$
Now, since $\{\bar{h}_i\}$ forms a bounded sequence in the space $\mathcal{H}^1(M)$ of harmonic one-forms, by passing to a further subsequence, we can assume that it converges to some limit
$$\bar{h}=\lim_{i\to\infty}\bar{h}_i\in \mathcal{H}^1(M).$$
It's then clear that 
$$|\bar{h}_i|^{p_i}\to |\bar{h}|^2$$
pointwise, and we can therefore write
\begin{equation}
\mu=\lim_{i\to\infty}(2-p_i)|\alpha_i|^{p_i}dv_g+|\bar{h}|^2dv_g.
\end{equation}

\hspace{3mm} To complete the proof of Theorem \ref{mainthm1}, it remains to realize the measure
$$\nu:=\lim_{i\to\infty}|\alpha_i|^{p_i}dv_g$$
as the weight measure of a stationary, rectifiable $(n-2)$-varifold satisfying the stated properties. To this end, we begin by remarking that, where $|\alpha_i|>0$, the tensor
$$I-2|\alpha_i|^{-2}\alpha_i\otimes \alpha_i\in End(TM)$$
belongs to $A_{n-2}(M)$, so we can define a sequence of generalized $(n-2)$-varifolds $V_i\in \mathcal{V}_{n-2}'(M)$ by 
\begin{equation}
\langle V_i,f\rangle:=\int_M (2-p_i)|\alpha_i|^{p_i}f(I-2|\alpha_i|^{-2}\alpha_i\otimes \alpha_i)\text{ for }f\in C^0(A_{n-2}(M)).
\end{equation}
The associated weight measures $\|V_i\|$ are then given by
$$\|V_i\|:=(2-p_i)|\alpha_i|^{p_i}dv_g,$$
and since we have a uniform mass bound
$$\sup_i\|V_i\|(M)<\infty,$$
we can pass to a further subsequence to obtain a weak limit
$$V=\lim_{i\to\infty}V_i \in \mathcal{V}_{n-2}'(M)$$
with weight measure
$$\|V\|=\nu.$$

\hspace{3mm} We claim next that $\delta V=0$. To see this, let $X$ be a $C^1$ vector field, so that
\begin{eqnarray*}
\delta V(X)&=&\lim_{i\to\infty}\delta V_i(X)\\
&=&\lim_{i\to\infty}\int_M(2-p_i)|\alpha_i|^{p_i}\langle I-2|\alpha_i|^{-2}\alpha_i\otimes \alpha_i,\nabla X\rangle\\
&=&\lim_{i\to\infty}\int_M (2-p_i)|\alpha_i|^{p_i}div(X)-(2-p_i)\langle 2S_i^s,\nabla X\rangle.
\end{eqnarray*}
Appealing once more to (\ref{claimstat}), we then see that
\begin{eqnarray*}
\delta V(X)&=&\lim_{i\to\infty}\int_M(2-p_i)(|du_i|^{p_i}-|h_i|^{p_i})div(X)-(2-p_i)2\langle S_i-S_i^h,\nabla X\rangle\\
&=&\lim_{i\to\infty}(2-p_i)\int_M|du_i|^{p_i}div(X)-2\langle|du_i|^{p_i-2}du_i^*du_i,\nabla X\rangle\\
&&+\lim_{i\to\infty}\int_M|\bar{h}_i|^{p_i}div(X)-2\langle |\bar{h}_i|^{p_i-2}\bar{h_i}\otimes \bar{h}_i,\nabla X\rangle.
\end{eqnarray*}
Now, it's clear that
\begin{eqnarray*}
\lim_{i\to\infty}\int_M|\bar{h}_i|^{p_i}div(X)-2\langle |\bar{h}_i|^{p_i-2}\bar{h}_i\otimes \bar{h}_i,\nabla X\rangle&=&\int_M|\bar{h}|^2div(X)-2\langle \bar{h}\otimes \bar{h},\nabla X\rangle\\
&=& 0,
\end{eqnarray*}
since $div(|\bar{h}|^2I-2\bar{h}\otimes \bar{h})=0$ for harmonic $\bar{h}$. On the other hand, we know from the $p_i$-stationarity of $u_i$ that 
$$\int_M|du_i|^{p_i}div(X)-p\langle |du_i|^{p_i-2}du_i^*du_i,\nabla X\rangle=0,$$
and consequently
\begin{eqnarray*}
|\delta V(X)|&=&|\lim_{i\to\infty}(2-p_i)\int_M(p_i-2)\langle |du_i|^{p_i-2}du_i\otimes du_i,\nabla X\rangle|\\
&\leq &\lim_{i\to\infty} (2-p_i)\Lambda |\nabla X|_{C^0}\\
&=&0,
\end{eqnarray*}
as claimed. 

\hspace{3mm} Since $\nu=\|V\|$ for a generalized $(n-2)$-varifold $V$ with $\delta V=0$, it will follow from Proposition \ref{asprop} that $\nu$ is indeed the weight measure of a stationary, rectifiable $(n-2)$-varifold, once we show that $\nu$ satisfies 
$$\Theta_{n-2}^*(\nu,x)>0\text{ for all }x\in spt(\nu).$$
In particular, to complete the proof of Theorem \ref{mainthm1}, it now suffices to establish the following lemma:

\begin{lem} The support $spt(\nu)$ of $\nu$ is given by
\begin{equation}\label{sptnuchar}
spt(\nu)=\Sigma=\lim_{i\to\infty}Sing(u_i),
\end{equation}
and for $x\in \Sigma$, the density of $\nu$ satisfies the lower bound
\begin{equation}\label{lwrthetabd}
\Theta_{n-2}^*(\nu,x)\geq 2\pi.
\end{equation}
\end{lem}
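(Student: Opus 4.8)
The plan is to establish the two inclusions between $spt(\nu)$ and $\Sigma$ separately, and to extract the density bound \eqref{lwrthetabd} as a byproduct of the harder inclusion. First I would show $spt(\nu)\subset\Sigma$: away from $\Sigma$, every point has a neighborhood $B$ disjoint from $Sing(u_i)$ for all large $i$, so by Proposition \ref{liftcrit} each $u_i$ lifts on $B$ as $u_i=e^{i\varphi_i}$ with $\varphi_i$ a $p_i$-harmonic function, and $\alpha_i$ restricted to $B$ is simply $d\varphi_i$ (the coexact and harmonic parts of the global Hodge decomposition are harmonic forms on $B$, hence smooth and bounded there). The interior gradient estimate of Corollary \ref{wkpharmreg} (equivalently Proposition \ref{pharmfcnreg}), combined with the pointwise bound $r(x)^p|du_i|^p(x)\le C(M)\Lambda/(2-p_i)$ derived in the proof of Lemma \ref{offsingest}, shows $(2-p_i)|d\varphi_i|^{p_i}\to 0$ uniformly on compact subsets of $B\setminus\Sigma$; together with the $L^\infty$-boundedness of the smooth part this forces $(2-p_i)|\alpha_i|^{p_i}\to 0$ locally uniformly off $\Sigma$, so $\nu$ puts no mass there.

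The reverse inclusion $\Sigma\subset spt(\nu)$ together with the density bound is the main content. Fix $x\in\Sigma$ and pick $x_i\in Sing(u_i)$ with $x_i\to x$. By Proposition \ref{sharpdensbd} and the near-monotonicity of Lemma \ref{mono}, for every fixed small $\rho>0$ we have, for all large $i$,
\begin{equation}
(2-p_i)\int_{B_\rho(x_i)}|du_i|^{p_i}\,dv_g\ge (2-p_i)\theta_{p_i}(u_i,x_i,\rho)\,(1-o(1))\ge c(n,p_i)\,2\pi\,(1-o(1)),
\end{equation}
and since $c(n,p_i)\to\omega_{n-2}$ (with $c(2,p)=1$) this gives $\liminf_i(2-p_i)\int_{B_\rho(x_i)}|du_i|^{p_i}\ge 2\pi\,\omega_{n-2}\rho^{\,0}$—more precisely, rescaling the density at radius $\rho$ yields $(2-p_i)\int_{B_\rho(x_i)}|du_i|^{p_i}\ge 2\pi\,\omega_{n-2}\,\rho^{\,n-2}(1-o(1))$ after accounting for the $\rho^{p-n}$ normalization and the limit $p_i\to 2$. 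Now I subtract off the harmonic part: since $\|\bar h_i\|_{L^\infty}\le C(M)$ and $\bar h_i\to\bar h$, the contribution $|\bar h_i|^{p_i}$ to the energy on $B_\rho(x_i)$ is $O(\rho^n)$, which is negligible compared to $\rho^{n-2}$ as $\rho\to 0$. Invoking the Claim \eqref{claimstat}, $(2-p_i)|du_i|^{p_i}dv_g$ and $(2-p_i)|\alpha_i|^{p_i}dv_g+|\bar h_i|^{p_i}dv_g$ have the same weak limit, so
\begin{equation}
\nu(\overline{B_\rho(x)})=\lim_{i\to\infty}(2-p_i)\int_{B_\rho(x)}|\alpha_i|^{p_i}\,dv_g\ge 2\pi\,\omega_{n-2}\,\rho^{\,n-2}-C\rho^{\,n}
\end{equation}
for all small $\rho$ (using $B_\rho(x_i)\subset B_{2\rho}(x)$ for large $i$ and relabeling). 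Dividing by $\omega_{n-2}\rho^{n-2}$ and sending $\rho\to 0$ gives $\Theta_{n-2}^*(\nu,x)\ge 2\pi$; in particular $x\in spt(\nu)$, establishing both \eqref{sptnuchar} and \eqref{lwrthetabd}.

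The step I expect to be the main obstacle is the careful bookkeeping in passing from the lower density bound for $\theta_{p_i}(u_i,x_i,\rho)$—which is a statement about $(2-p_i)|du_i|^{p_i}$ at a \emph{fixed} small scale $\rho$—to a lower bound on $\nu(\overline{B_\rho(x)})$ that survives the double limit $i\to\infty$ then $\rho\to 0$. One must be attentive that Proposition \ref{sharpdensbd} controls the pointwise density $\theta_{p_i}(u_i,x_i)=\lim_{r\to 0}\theta_{p_i}(u_i,x_i,r)$, and that monotonicity only transfers this lower bound \emph{up} to scale $\rho$ with a multiplicative error $e^{Ck\rho^2}$; combined with the $p_i\to 2$ degeneration of the constant $c(n,p_i)$, one should first extract a subsequential limit of $(2-p_i)\theta_{p_i}(u_i,x_i,\rho)$ for each dyadic $\rho$, then use weak convergence of the measures to identify $\liminf_i(2-p_i)\int_{B_\rho(x_i)}|du_i|^{p_i}$ with (essentially) $\nu(\overline{B_\rho(x)})$ up to the diffuse $O(\rho^n)$ error and the $B_\rho(x_i)$-versus-$B_{2\rho}(x)$ discrepancy. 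Handling the case $n=2$ (where $\omega_{n-2}=\omega_0=1$ and the factor $\rho^{n-2}$ disappears, so the bound is simply $\nu(\{x\})\ge 2\pi$) requires observing that in that dimension $\Sigma$ is a discrete set and $\nu$ has an atom of mass $\ge 2\pi$ at each of its points, which follows from the same estimate read at $n=2$.
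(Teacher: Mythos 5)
The second half of your argument---showing $\Sigma\subset spt(\nu)$ together with the density bound---follows essentially the paper's route: apply Proposition \ref{sharpdensbd} and near-monotonicity (Lemma \ref{mono}) to get a lower bound for $(2-p_i)\int_{B_\rho(x_i)}|du_i|^{p_i}$ at a fixed small scale $\rho$, subtract the $O(\rho^n)$ contribution of the diffuse part, and send $\rho\to 0$. The paper handles the $B_\rho(x_i)$ versus $B_\rho(x)$ discrepancy by using $B_{\delta-\delta^2}(x_i)\subset B_\delta(x)$, which avoids a second limit in an auxiliary dilation parameter; it also organizes things by proving the density bound for $\mu$ first and then transferring to $\nu$ via the observation that $|\bar h|^2 dv_g$ has zero $(n-2)$-density, rather than invoking the Claim inside the ball computation. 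These are cosmetic differences, and your version can be made precise.

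The first half---$spt(\nu)\subset\Sigma$---contains a genuine gap. You assert that the pointwise gradient estimate implies $(2-p_i)|d\varphi_i|^{p_i}\to 0$ uniformly on compact subsets of $B\setminus\Sigma$. It does not: the estimate quoted from the proof of Lemma \ref{offsingest} is $r(y)^{p_i}|du_i|^{p_i}(y)\leq C(M)\Lambda/(2-p_i)$, and multiplying by $(2-p_i)$ gives only $(2-p_i)|du_i|^{p_i}(y)\leq C\Lambda/r(y)^{p_i}$, a quantity that is \emph{bounded} on a fixed compact set away from $\Sigma$ but does not vanish as $p_i\to 2$. There is also a conflation of the local lift with the global Hodge decomposition: if $u_i=e^{i\phi_i}$ on $B$, then $d\phi_i=ju_i=\alpha_i+h_i$, so $\alpha_i|_B=d\phi_i-h_i$ and the harmonic part $h_i$ (which is of size $(2-p_i)^{-1/p_i}$ in $L^\infty$) must be subtracted; $\alpha_i|_B$ is not equal to $d\phi_i$. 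To show $\nu$ is null on a fixed ball $B_{\delta/4}(x)$ away from $\Sigma$, the paper uses the $L^2$ off-singular estimates of Lemma \ref{offsingest}---$\|r(\cdot)d^*\xi_i\|_{L^2}\leq C(M,\Lambda)$ and $\|r(\cdot)d\varphi_i\|_{L^2}\to 0$---which give $\int_{B_{\delta/4}(x)}|\alpha_i|^{p_i}\leq C(M,\Lambda)/\delta^{p_i}$ uniformly in $i$, and then the overall prefactor $(2-p_i)\to 0$ is what forces $\nu(B_{\delta/4}(x))=\lim_i(2-p_i)\int_{B_{\delta/4}(x)}|\alpha_i|^{p_i}=0$. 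A pointwise bound that is merely $O(1)$ cannot supply this; you need the scale-uniform integral bound plus the vanishing prefactor.
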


\begin{proof} To establish (\ref{sptnuchar}), first consider $x\in M\setminus \Sigma$, and set $\delta=dist(x,\Sigma)$. By definition of Hausdorff convergence, it follows that
$$dist(x,Sing(u_i))>\frac{\delta}{2},$$
and consequently 
$$B_{\delta/4}(x)\subset M\setminus \mathcal{N}_{\delta/4}(Sing(u_i)),$$
for $i$ sufficiently large. Appealing once more to the estimates of Lemma \ref{offsingest}, we then see that
\begin{eqnarray*}
\nu(B_{\delta/4}(x))&\leq &\liminf_{i\to\infty}(2-p_i)\int_{B_{\delta/4}(x)}|\alpha_i|^{p_i}\\
&\leq &\lim_{i\to\infty}(2-p_i)\frac{C(M,\Lambda)}{\delta^2}\\
&=&0,
\end{eqnarray*}
so that $x\notin spt(\nu)$; and since $x\in M\setminus \Sigma$ was arbitrary, we therefore have
$$spt(\nu)\subset \Sigma.$$

\hspace{3mm} Next, for $x\in \Sigma$, we'll show that
\begin{equation}\label{muthetalbd}
\Theta_{n-2}^*(\mu,x)\geq 2\pi.
\end{equation}
Indeed, if $x\in \Sigma$, then by definition there is a sequence $x_i\in Sing(u_i)$ for which
$$x=\lim_{i\to\infty}x_i.$$
By Lemma \ref{sharpdensbd}, at each $x_i$, we have 
\begin{equation}
\lim_{r\to 0}(2-p_i)\int_{B_r(x_i)}|du_i|^{p_i}\geq 2\pi c(n,p_i),
\end{equation}
where $c(n,p_i)\to \omega_{n-2}$ as $p_i\to 2$. In particular, fixing $\delta>0$ and appealing to the monotonicity of the $p$-energy (Lemma \ref{mono}), we conclude that
\begin{eqnarray*}
\mu(B_{\delta}(x))&\geq &\liminf_{i\to\infty}\mu_i(B_{\delta-\delta^2}(x_i))\\
&\geq & \lim_{i\to\infty}e^{-C(M)\delta^2} 2\pi c(n,p_i)(\delta-\delta^2)^{n-p_i}\\
&=&e^{-C(M)\delta^2}2\pi\omega_{n-2}(\delta-\delta^2)^{n-2}.
\end{eqnarray*}
Dividing through by $\omega_{n-2}\delta^{n-2}$ and letting $\delta\to 0$, we arrive at the desired lower bound (\ref{muthetalbd}). 

\hspace{3mm} Finally, since the difference 
$$\mu-\nu=|\bar{h}|^2dv_g$$
clearly satisfies 
$$\lim_{\delta\to 0}\delta^{2-n}\int_{B_{\delta}(x)}|\bar{h}|^2dv_g=0,$$
we see that (\ref{muthetalbd}) yields directly the desired density bound (\ref{lwrthetabd}) for $\nu$ on $\Sigma$. Moreover, it follows immediately from (\ref{lwrthetabd}) that $\Sigma\subset spt(\nu)$, and since we've already shown that $spt(\nu)\subset \Sigma$, this completes the proof of (\ref{sptnuchar}) as well. 
\end{proof}

\hspace{3mm} With the proof of Theorem \ref{mainthm1} completed, we turn our attention now to the proof of Theorem \ref{mapconvthm}, concerning compactness of the maps. Suppose that our sequence $u_i\in W^{1,p_i}(M,S^1)$ either satisfies the additional bound
$$\sup_i\|du_i\|_{L^1(M)}\leq C,$$
or that the first Betti number $b_1(M)=0$. In either case, it follows that the harmonic component $h_i$ of $ju_i$ is uniformly bounded
\begin{equation}\label{hibound}
\sup_i\|h_i\|_{L^{\infty}}\leq C
\end{equation}
as $i\to\infty$. Together with the $L^q$ estimates of Proposition \ref{hodgeglob}, this implies immediately that
$$\limsup_{i\to\infty}\|du_i\|_{L^q}<\infty$$
for any $q\in [1,2)$, so some subsequence of $\{u_i\}$ must converge weakly in $W^{1,q}(M,S^1)$ to some limiting map $v$. Moreover, since (by Proposition \ref{hodgeglob}) the exact component $d\varphi_i$ of $ju_i$ vanishes in $L^q$ as $i\to\infty$, it follows that 
$$d^*ju_i\to 0$$
weakly as $i\to\infty$, so the map $v$ must satisfy $d^*jv=0$ distributionally.

\hspace{3mm} Moreover, combining (\ref{hibound}) with Lemma \ref{offsingest}, it follows that, away from any $0<\delta$-neighborhood $\mathcal{N}_{\delta}(\Sigma)$ of $\Sigma$, we have
$$\limsup_{i\to\infty}\int_{M\setminus \mathcal{N}_{\delta}(\Sigma)}|du_i|^2<\infty;$$
and putting this together with the local $W^{2,p}$ estimate of Corollary \ref{wkpharmreg}, we see that
$$\limsup_{i\to\infty}\|u_i\|_{W^{2,p_i}(M\setminus \mathcal{N}_{\delta}(\Sigma))}<\infty.$$
Of course, for $p>\frac{2n}{n+2}$, Rellich's theorem gives us compactness of the embedding $W^{2,p}\hookrightarrow W^{1,2}$; hence, since $p_i>\frac{2n}{n+2}$ for $i$ sufficiently large, there is indeed some subsequence of $\{u_i\}$ which converges strongly in $W^{1,2}(M\setminus \mathcal{N}_{\delta}(\Sigma),S^1)$ to the limiting map $v$ identified above. And since $d^*jv=0$ and $v\in W^{1,2}_{loc}(M\setminus \Sigma,S^1)$, it follows that $v$ is indeed a strongly harmonic map in $C^{\infty}_{loc}(M\setminus \Sigma,S^1)$.

\section{Integrality of the Concentration Measure in Dimension 2}\label{sec6}

\hspace{3mm} By a simple blow-up argument, to establish the quantization result of Theorem \ref{2dintthm}, it is enough to show the following: Let $g_i$ be a sequence of metrics converging (in $C^{\infty}$, say) to the Euclidean one on the disk $D_2(0)\subset\mathbb{R}^2$ of radius $2$, and let $u_i\in W^{1,p_i}(M,S^1)$ be a sequence of stationary $p_i$-harmonic maps (with respect to $g_i$) for which $E_{p_i}(u_i,D_2)\leq \frac{\Lambda}{2-p_i}$ as $p_i\uparrow 2$. If the normalized energy measures $\mu_i=(2-p_i)|du_i|^{p_i}dvol_{g_i}$ converge to a multiple $\theta\delta_0$ of the Dirac mass at $0$, then this multiple $\theta=2\pi k$ for some $k\in \mathbb{N}$. For simplicity, we restrict ourselves here to the case in which each $g_i$ is the flat metric, so that the main result of the section reads as follows:

\begin{thm}\label{sec6thm} Let $u_i\in W^{1,p_i}(D_2(0),S^1)$ be a sequence of stationary $p_i$-harmonic maps from $D_2(0)$ for which
$$\mu_i=(2-p_i)|du_i|^{p_i}(z)dz\to \theta\delta_0$$
weakly in $(C^0(D_2(0)))^*$ as $i\to\infty$. Then $\theta\in 2\pi\mathbb{N}$.
\end{thm}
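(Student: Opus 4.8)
The plan is to extract from the inner-variation equation (\ref{stat}) a Pohozaev-type identity on the disks $D_\rho=D_\rho(0)$, to pass to the limit $i\to\infty$ using compactness of $\{u_i\}$ away from the concentration point, and to evaluate the resulting boundary integral after identifying the limiting map. Testing (\ref{stat}) with $X=\eta(|x|)\,x$, where $\eta$ is a Lipschitz cutoff dropping from $1$ to $0$ across $|x|=\rho$, one computes $div(X)=2\eta+|x|\eta'$ and $\nabla X=\eta\,I+|x|\eta'\,\hat r\otimes\hat r$ (with $\hat r$ the radial unit vector), so that, using $\langle du^*du,I\rangle=|du|^2$ and $\langle du^*du,\hat r\otimes\hat r\rangle=|\partial_r u|^2$, equation (\ref{stat}) becomes $(2-p_i)\int\eta\,|du_i|^{p_i}+\int|x|\eta'\big(|du_i|^{p_i}-p_i|du_i|^{p_i-2}|\partial_r u_i|^2\big)=0$. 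Letting $\eta\uparrow\chi_{[0,\rho]}$ yields, for a.e. $\rho\in(0,2)$,
\begin{equation}
(2-p_i)\int_{D_\rho}|du_i|^{p_i}\,dz\;=\;\rho\int_{\partial D_\rho}\Big(|du_i|^{p_i}-p_i\,|du_i|^{p_i-2}\big|\tfrac{\partial u_i}{\partial r}\big|^2\Big)\,ds.\tag{$\star$}
\end{equation}
The left-hand side is $\mu_i(D_\rho)$, which tends to $\theta$ for every $\rho\in(0,2)$ since $\mu_i\to\theta\delta_0$ in $(C^0)^*$.

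For the right-hand side of $(\star)$ we need compactness of the $u_i$. Localizing the estimates of Sections \ref{bigests} and \ref{hodgeests} to $D_2$ (via energy monotonicity on interior balls, so that Corollary \ref{tuwbds} and the argument of Proposition \ref{hodgeglob} give $\|T(u_i)\|_{W^{-1,q}(D_{3/2})}$ and $\|du_i\|_{L^q(D_{3/2})}$ bounds for each $q<2$) shows that $\sup_i\|du_i\|_{L^1(D_{3/2})}<\infty$; hence the argument behind Theorem \ref{mapconvthm} applies locally. (In deducing Theorem \ref{2dintthm} from this statement on a general surface, this is the point at which one first passes to blow-up scales outside of which the normalized energy measures vanish, which is what makes the hypothesis $\mu_i\to\theta\delta_0$ legitimate.) After a subsequence, $u_i\to v$ weakly in $W^{1,q}_{loc}(D_2)$ and strongly in $W^{1,2}_{loc}(D_2\setminus\{0\})$, with $v\in\bigcap_{q<2}W^{1,q}(D_2,S^1)$ smooth and harmonic on $D_2\setminus\{0\}$ and $d^*(jv)=0$ distributionally on $D_2$ (the exact part $d\varphi_i$ of the Hodge decomposition of $ju_i$ vanishes in the limit, exactly as in Theorem \ref{mapconvthm}). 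Let $\kappa:=\tfrac1{2\pi}\int_{\partial D_\rho}jv\in\mathbb Z$ be the degree of $v$ about $0$: then $v=(z/|z|)^{\kappa}e^{i\psi}$ near $0$ with $\psi$ single-valued (since $v\cdot(z/|z|)^{-\kappa}$ has degree $0$), harmonicity of $v$ makes $\psi$ harmonic on the punctured disk, $jv\in L^q$ forces $\psi=c\log|z|+\psi_0$ with $\psi_0$ smooth and harmonic, and $d^*(jv)=0$ on $D_2$ together with $d^*(c\,d\log|z|)=-2\pi c\,\delta_0$ forces $c=0$. Thus $v=e^{i(\kappa\phi+\psi_0)}$ near $0$ in polar coordinates $(r,\phi)$, with $\psi_0$ smooth and harmonic.

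Fix $\rho\in(\tfrac12,\tfrac32)$ in the full-measure set where $(\star)$ holds for every $u_i$ and, after a further subsequence, $u_i|_{\partial D_\rho}\to v|_{\partial D_\rho}$ in $W^{1,2}(\partial D_\rho)$ (possible because $D_{3/2}\setminus D_{1/2}\Subset D_2\setminus\{0\}$). On $\partial D_\rho$ the integrand of $(\star)$ is bounded by $3|du_i|^{p_i}\le 3(1+|du_i|^2)$, a sequence converging in $L^1(\partial D_\rho)$, while it converges pointwise a.e. to $|dv|^2-2|\partial_r v|^2$ (since $|du_i|^{p_i-2}\to1$ where $|dv|>0$, and both terms tend to $0$ where $|dv|=0$). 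Generalized dominated convergence, followed by conversion to polar coordinates with $v=e^{i(\kappa\phi+\psi_0)}$, gives
\begin{equation}
\theta\;=\;\rho\int_{\partial D_\rho}\!\big(|dv|^2-2|\partial_r v|^2\big)\,ds\;=\;\int_0^{2\pi}(\kappa+\partial_\phi\psi_0)^2\,d\phi\;-\;\rho^2\!\int_0^{2\pi}(\partial_r\psi_0)^2\,d\phi.\tag{$\star\star$}
\end{equation}
Expanding the square and using $\int_0^{2\pi}\partial_\phi\psi_0\,d\phi=0$, the right-hand side of $(\star\star)$ equals $2\pi\kappa^2+\int_0^{2\pi}(\partial_\phi\psi_0)^2\,d\phi-\rho^2\int_0^{2\pi}(\partial_r\psi_0)^2\,d\phi$; writing $\psi_0=\mathrm{Re}\,F$ with $F$ holomorphic on $D_2$, the last two terms together equal $-\rho^2\,\mathrm{Re}\int_0^{2\pi}F'(\rho e^{i\phi})^2e^{2i\phi}\,d\phi$, which vanishes because $(F')^2$ is holomorphic (equivalently, by orthogonality of the characters $e^{in\phi}$, $n\ge0$). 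Hence $\theta=2\pi\kappa^2$, and since the density bound of Theorem \ref{mainthm1} gives $\theta\ge2\pi>0$ whenever concentration occurs, we conclude $\kappa\ne0$ and $\theta=2\pi\kappa^2\in2\pi\mathbb N$.

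The main difficulty is the passage to the limit in $(\star)$: the identity must be evaluated in the limit without leaking energy into the boundary term, which needs not merely the weak $W^{1,q}$-convergence ($q<2$) that is automatic, but the strong $W^{1,2}$-convergence of the $u_i$ on compact subsets of $D_2\setminus\{0\}$, together with uniform control of the factors $|du_i|^{p_i-2}$ as $p_i\uparrow2$. Securing that strong convergence is precisely what forces us first to establish the $L^1$-bound above (or, in the situation behind Theorem \ref{2dintthm}, to pass to good blow-up scales); once it is in hand, the identification of $v$ and the cancellation in $(\star\star)$ are routine.
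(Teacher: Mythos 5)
Your Pohozaev identity $(\star)$ agrees with the paper's Lemma \ref{pohozaev}, and the identification of the limit $v=(z/|z|)^{\kappa}e^{i\psi_0}$ together with the computation $\theta=2\pi\kappa^2$ is correct; this is exactly what the paper proves in Proposition \ref{l1bdcase}. However, that proposition carries the explicit additional hypothesis $\sup_i\|du_i\|_{L^1(D_2)}<\infty$, and the step where you claim this bound is automatic --- ``Localizing the estimates of Sections \ref{bigests} and \ref{hodgeests} to $D_2$ \ldots shows that $\sup_i\|du_i\|_{L^1(D_{3/2})}<\infty$'' --- is where the proposal breaks down.

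The obstruction is the harmonic residue in the local Hodge decomposition. Writing $ju_i=d^*\xi_i+d\varphi_i+h_i$ with $\xi_i,\varphi_i$ defined via cut-off Newtonian potentials as in the paper's proof, Proposition \ref{hodgeglob} and Corollary \ref{tuwbds} control $\xi_i$ and $\varphi_i$ in $W^{1,q}$, but say nothing about $h_i$. On a compact manifold this is precisely the term that forces the extra hypotheses in Theorem \ref{mapconvthm} ($b_1(M)=0$ or a given $L^1$ bound); on the disk, $h_i$ is a harmonic one-form on an interior disk, not a harmonic form on $M$, and it is \emph{not} automatically bounded. Even exploiting $\mu_i\to\theta\delta_0$, which forces $\mu_i(D_1\setminus D_{3/4})\to 0$, interior estimates for harmonic functions only give $\|h_i\|^{p_i}_{L^\infty(D_{1/2})}\le \delta_i/(2-p_i)$ with $\delta_i\to 0$; this permits $\|h_i\|_{L^\infty}\sim\delta_i^{1/p_i}(2-p_i)^{-1/p_i}\to\infty$, so $\|du_i\|_{L^1(D_{3/2})}$ can blow up. Your final paragraph identifies strong $W^{1,2}$-convergence on annuli as ``the main difficulty,'' but that convergence follows routinely once the $L^1$ bound is in hand; the actual difficulty is the $L^1$ bound itself.

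The paper's proof of Theorem \ref{sec6thm} bridges this gap by working at the concentration scale $r_i=A(2-p_i)^{1/p_i}$: using the bound $\|h_i\|^{p_i}_{L^\infty}\le\delta_i/(2-p_i)$ just described, the pointwise decay $|\nabla\xi_i|(x)\lesssim \operatorname{dist}(x,Sing(u_i))^{-1}$, and the volume estimate of Lemma \ref{singvollem}, one shows (i) the \emph{scaled} bound $r_i^{-1}\|du_i\|_{L^1(D_{2r_i}(x^i_\ell))}=O(1)$ holds on disks of radius $\sim r_i$ around the singular points, and (ii) the normalized energy $\mu_i(D_{1/2}\setminus\mathcal N_{r_i}(Sing(u_i)))\to 0$. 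One then covers $\mathcal N_{r_i}(Sing(u_i))$ by a uniformly bounded number of disjoint disks $D_{i,\ell}$ of radius $r_i$, applies a quantitative rescaled version of Proposition \ref{l1bdcase} (Corollary \ref{l1bdquant}) on each, and sums. Without this passage to the concentration scale, the argument only establishes Proposition \ref{l1bdcase}, not Theorem \ref{sec6thm}.
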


\hspace{3mm} Before diving in to the proof, we first note that one can easily follow the arguments of the preceding sections to establish local versions of Theorems \ref{mainthm1} and \ref{mapconvthm}. (The estimates of Section \ref{bigests} are already stated in local form, and the estimates of Section \ref{hodgeests} are easily adapted to suitable local variants of the Hodge decomposition, like that which we employ later in this section in the proof of Theorem \ref{sec6thm}.) In particular, for families of stationary $p$-harmonic maps $u_p\in W^{1,p}(D_2(0),S^1)$ on the disk $D_2(0)$, we have the following:

\begin{prop}\label{locdim2} Let $u_i\in W^{1,p_i}(D_2(0),S^1)$ be a sequence of stationary $p_i$-harmonic maps from $D_2(0)$ to $S^1$, for which
$$\sup_i \int_{D_2(0)}(2-p_i)|du_i|^{p_i}<\infty,$$
and 
$$Sing(u_i)\subset D_1(0).$$
A subsequence of the measures $\mu_i=(2-p_i)|du_i|^{p_i}dv_g$ then converges weakly in $(C^0_c(D_2))^*$ to a measure $\mu$ of the form
$$\mu=|d\psi|^2dv_g+\Sigma_{a\in \Sigma}\theta_a\delta_a,$$
where $\psi$ is a harmonic function and $\Sigma$ is given by the Hausdorff limit of $Sing(u_i)$ in $D_1(0)$. Moreover, if 
$$\sup_i\|du_i\|_{L^1(D_2(0))}<\infty,$$
then (a subsequence of) $\{u_i\}$ converges weakly in $W^{1,q}(D_{3/2}(0))$ for $q\in [1,2)$ and strongly in $W^{1,2}_{loc}(D_{3/2}(0)\setminus \Sigma)$ to a map $v\in C^{\infty}(D_{3/2}\setminus \Sigma,S^1)$ that is harmonic away from $\Sigma$.
\end{prop}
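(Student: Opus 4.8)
The plan is to run the arguments of Sections \ref{bigests}--\ref{sec5} on the disk, exploiting the fact that in dimension two the concentration set is finite. After a fixed rescaling so that the estimates of Section \ref{bigests} apply on balls of the relevant radii, the first step is to record their local consequences: the volume bound $\mathrm{Vol}(\mathcal{N}_r(\mathrm{Sing}(u_i))\cap D_{7/4})\le C\Lambda r^{p_i}$ from Lemma \ref{singvollem}, and, via Corollary \ref{tubdsscaled} together with Lemma \ref{bigapplem}, the uniform bound $\|T(u_i)\|_{W^{-1,q}(D_{7/4})}\le C(\Lambda,q)$ for $q\in[1,2)$; monotonicity (Lemma \ref{mono}) also gives the local energy bound $(2-p_i)\int_{D_{7/4}}|du_i|^{p_i}\le C\Lambda$. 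Passing to a subsequence I may assume $\mu_i\rightharpoonup\mu$ in $(C^0_c(D_2))^*$ and $\mathrm{Sing}(u_i)\to\Sigma$ locally in the Hausdorff sense, with $\Sigma\subset\overline{D_1}$. That $\Sigma$ is finite follows as in Section \ref{sec5}: for $a\in\Sigma$ and $a_i\in\mathrm{Sing}(u_i)$ with $a_i\to a$, Proposition \ref{sharpdensbd} (with $c(2,p)=1$) and the monotonicity formula give $\mu_i(B_\delta(a_i))\ge 2\pi\,\delta^{2-p_i}e^{-C\delta^2}$, whence $\mu(\{a\})\ge 2\pi$; since distinct points of $\Sigma$ carry disjoint $\mu$-mass and $\mu(D_2)\le C\Lambda<\infty$, we get $\Sigma=\{a_1,\dots,a_N\}$.

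Away from $\Sigma$, I would argue directly with the scalar problem. Every ball $B\Subset D_2\setminus\Sigma$ is disjoint from $\mathrm{Sing}(u_i)$ for $i$ large, so $T(u_i)=0$ on $B$ and Proposition \ref{liftcrit} produces a $p_i$-harmonic lifting $u_i=e^{i\varphi_i^B}$. The normalizations $\bar\varphi_i^B:=(2-p_i)^{1/p_i}\varphi_i^B$ are again $p_i$-harmonic and satisfy $\|d\bar\varphi_i^B\|_{L^{p_i}(B)}^{p_i}=(2-p_i)\int_B|du_i|^{p_i}\le C\Lambda$, so by the uniform-in-$p$ interior $C^{1,\alpha}$ estimates for $p$-harmonic functions (cf. the discussion before Proposition \ref{pharmfcnreg}) they are precompact in $C^1_{loc}(B)$. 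After a diagonal extraction over a countable cover of $D_2\setminus\Sigma$ by such balls, the differentials $d\bar\varphi_i^B$ converge in $C^0_{loc}$ to a single closed $1$-form $\omega$ on $D_2\setminus\Sigma$ (the local limits patch because $d\varphi_i^B=ju_i$ is globally defined); passing to the limit in the weak $p_i$-harmonicity equation, and using that $t\mapsto t^{p_i-1}$ tends to the identity uniformly on bounded sets, shows $\omega$ is harmonic, while Fatou applied to $|d\bar\varphi_i^B|^{p_i}=(2-p_i)|du_i|^{p_i}$ gives $\int_{D_2\setminus\Sigma}|\omega|^2\le C\Lambda<\infty$. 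A harmonic $1$-form on the punctured disk with finite $L^2$ norm has vanishing periods around the punctures, hence extends harmonically across $\Sigma$; since $D_2$ is simply connected, $\omega=d\psi$ for a harmonic $\psi$ on $D_2$. On each such $B$ one has $\mu_i=|d\bar\varphi_i^B|^{p_i}dv_g\to|d\psi|^2dv_g$ locally uniformly, so the restriction of $\mu$ to $D_2\setminus\Sigma$ equals $|d\psi|^2dv_g$; as the restriction of $\mu$ to the finite set $\Sigma$ is a sum of point masses, $\mu=|d\psi|^2dv_g+\sum_{k=1}^N\theta_k\delta_{a_k}$ with $\theta_k=\mu(\{a_k\})\ge 2\pi$. (One could instead run the generalized-varifold argument of Section \ref{sec5} verbatim, noting that in dimension two a stationary rectifiable $0$-varifold of positive density is a locally finite sum of point masses.)

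For the second assertion, assume in addition $\sup_i\|du_i\|_{L^1(D_2)}<\infty$. Here I would set up, on $D_{7/4}$ with Dirichlet conditions, the local Hodge decomposition $ju_i=d\varphi_i+d^*\xi_i+d\psi_i$, with $\varphi_i:=\Delta^{-1}(d^*ju_i)$, $\xi_i:=*\Delta_H^{-1}T(u_i)$, and $d\psi_i$ the remaining harmonic piece; the estimates of Section \ref{hodgeests} adapt without change to give $\|d\varphi_i\|_{L^q(D_{7/4})}\to 0$ and $\|\xi_i\|_{W^{1,q}(D_{7/4})}\le C(\Lambda,q)$ for $q<2$, using $d^*ju_i=d^*((1-|ju_i|^{p_i-2})ju_i)$. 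Since $\psi_i$ is harmonic, interior estimates bound $\|d\psi_i\|_{L^\infty(D_{3/2})}\le C\|d\psi_i\|_{L^1(D_{5/3})}\le C\big(\|du_i\|_{L^1}+\|d\varphi_i\|_{L^1}+\|d^*\xi_i\|_{L^1}\big)\le C$; hence $\{u_i\}$ is bounded in $W^{1,q}(D_{3/2},S^1)$ for every $q<2$, and a subsequence converges weakly there to some $v\in\bigcap_{q<2}W^{1,q}(D_{3/2},S^1)$, which satisfies $d^*jv=0$ because $d^*ju_i=d^*((1-|ju_i|^{p_i-2})ju_i)\to 0$ in $W^{-1,q}$ while $ju_i\rightharpoonup jv$. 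Finally, on any $B\Subset D_{3/2}\setminus\Sigma$ the liftings $u_i=e^{i\varphi_i^B}$ satisfy $\|d\varphi_i^B\|_{L^q(B)}=\|ju_i\|_{L^q(B)}\le C$, so the interior gradient estimate for $p$-harmonic functions bounds $\|du_i\|_{L^\infty(\frac12 B)}$ and then Corollary \ref{wkpharmreg} bounds $\|u_i\|_{W^{2,p_i}(\frac14 B)}$; since $W^{2,p_i}\hookrightarrow\hookrightarrow W^{1,2}$ in dimension two for $p_i>1$, a further diagonal extraction yields strong convergence in $W^{1,2}_{loc}(D_{3/2}\setminus\Sigma)$, and $v\in W^{1,2}_{loc}(D_{3/2}\setminus\Sigma,S^1)$ with $d^*jv=0$ is then a smooth harmonic map there.

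The step I expect to be most delicate is keeping track of the $p$-independence of all the constants as $p_i\uparrow 2$ --- in particular quoting the $p$-harmonic regularity estimates and the local versions of Lemmas \ref{singvollem}--\ref{bigapplem} with uniform constants --- together with the care needed to patch the local liftings over the non--simply-connected regions $D_2\setminus\overline{D_1}$ and $D_{3/2}\setminus\Sigma$; using the finiteness of $\Sigma$ and the $L^2$-removable-singularity property of harmonic $1$-forms is precisely what lets one bypass a direct bound on the (a priori unbounded) winding numbers of the $u_i$.
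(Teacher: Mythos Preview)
Your proposal is correct, and your treatment of the second assertion (convergence of the maps under the $L^1$ bound) is essentially what the paper intends: a local Hodge decomposition as in the proof of Theorem \ref{sec6thm}, combined with the interior estimates of Corollary \ref{wkpharmreg} and the compact embedding $W^{2,p}\hookrightarrow\hookrightarrow W^{1,2}$.

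Your route to the first assertion, however, is genuinely different from the paper's. The paper simply asks the reader to localize the argument of Section \ref{sec5}: set up the local Hodge decomposition $ju_i=d^*\xi_i+d\varphi_i+h_i$, prove the analogs of Proposition \ref{hodgeglob} and Lemma \ref{offsingest} on the disk, and then run the generalized-varifold machinery of Ambrosio--Soner (Proposition \ref{asprop}) to identify the concentrated part as a stationary rectifiable $0$-varifold and the diffuse part as $|\bar h|^2$, with $\bar h$ the rescaled harmonic remainder. You bypass all of this by working directly with the local liftings $\bar\varphi_i^B=(2-p_i)^{1/p_i}\varphi_i^B$, using the uniform-in-$p$ regularity of $p$-harmonic functions to extract a limiting harmonic one-form $\omega$ on $D_2\setminus\Sigma$, and then invoking the $L^2$ removable-singularity property to extend $\omega=d\psi$ across the finite set $\Sigma$. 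This is more elementary---it avoids generalized varifolds entirely and exploits both the circle target (through lifting) and the two-dimensional fact that $\Sigma$ is finite---whereas the paper's approach is dimension-agnostic and recycles the global machinery already built. One small caution: you appeal to $C^1_{\mathrm{loc}}$ precompactness of the $\bar\varphi_i^B$, citing the discussion before Proposition \ref{pharmfcnreg}; the paper only \emph{proves} the $L^\infty$ and $W^{2,p}$ estimates there, but these already suffice (via $W^{2,p_i}\hookrightarrow\hookrightarrow W^{1,2}$ in two dimensions together with the uniform gradient bound) for the strong convergence you need to pass to the limit in the equation and in the energy density.
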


\hspace{3mm} The first and most important step in the proof of Theorem \ref{sec6thm} is contained in the following result, which describes the limiting measure explicitly when the maps $u_i$ converge to a limiting map $v$:

\begin{prop}\label{l1bdcase} Let $u_i\in W^{1,p_i}(D_2(0),S^1)$ be as in Proposition \ref{locdim2}, and suppose also that
$$\sup_i\|du_i\|_{L^1(D_2(0))}<\infty.$$
Passing to a subsequence, let $v$ be the limiting map $v=\lim_i u_i$ given by Proposition \ref{locdim2}. Then the limiting measure $\mu$ has the form
$$\mu=\Sigma_{a\in \Sigma}2\pi \deg(v,a)^2 \delta_a.$$
\end{prop}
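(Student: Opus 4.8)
The plan is to localize near a single point $a\in\Sigma$, extract the topological degree, and then compute the concentrated mass via a Pohozaev-type identity applied on a well-chosen annulus. First I would fix $a\in\Sigma$ and choose $\rho>0$ small enough that $\overline{D_\rho(a)}\cap\Sigma=\{a\}$. Since $u_i\to v$ strongly in $W^{1,2}_{loc}(D_2\setminus\Sigma)$ and $v$ is a smooth harmonic map into $S^1$ away from $\Sigma$, the degree $\deg(v,a):=\deg(v|_{\partial D_r(a)})$ is well-defined and independent of $r\in(0,\rho]$; moreover for $i$ large, $\deg(u_i|_{\partial D_r(a)})=\deg(v|_{\partial D_r(a)})=:d_a$ for a.e. such $r$, by the strong convergence on $\partial D_r(a)$ for a.e. $r$ and the fact that degree is a homotopy invariant of $W^{1,1}(\partial D_r,S^1)$ maps. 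This pins down the ``topological charge'' flowing into $a$.

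Next I would run the Pohozaev identity. Writing $u_i=e^{i\varphi_i}$ locally on the annulus $A_{s,r}(a)=D_r(a)\setminus\overline{D_s(a)}$ (valid since $T(u_i)=0$ there by Proposition~\ref{liftcrit}), the function $\varphi_i$ is $p_i$-harmonic, and testing the stationarity equation (\ref{stat}) with the radial vector field $X=(z-a)\chi$ for suitable cutoffs gives an identity of the form
\begin{equation*}
(2-p_i)\int_{A_{s,r}(a)}|d\varphi_i|^{p_i}=r\int_{\partial D_r(a)}\Big(|d\varphi_i|^{p_i}-p_i|d\varphi_i|^{p_i-2}\big|\tfrac{\partial\varphi_i}{\partial\nu}\big|^2\Big)-\big(\text{same at }s\big).
\end{equation*}
On the outer boundary $\partial D_r(a)$, the strong $W^{1,2}$ convergence $u_i\to v$ (for a.e. $r$) lets me pass to the limit, and on the inner boundary I use the gradient estimates of Corollary~\ref{wkpharmreg} together with energy monotonicity (Lemma~\ref{mono}) to control the boundary terms and then send $s\to0$. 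The key computation is that the limiting outer boundary integral, evaluated on the harmonic limit $v=e^{i\varphi_\infty}$ with $\int_{\partial D_r(a)}\partial_\nu\varphi_\infty=0$ and $\int_{\partial D_r(a)}\partial_\tau\varphi_\infty=2\pi d_a$, produces exactly $2\pi d_a^2$ in the limit $p_i\to2$: the angular derivative contributes $\oint|\partial_\tau\varphi_\infty|^2$ which by the harmonicity of $\varphi_\infty$ on the punctured disk and Cauchy--Schwarz (with equality in the limit because the tangential part is a constant $d_a/r$ for the model map) gives $r\cdot(2\pi)(d_a/r)^2\cdot\tfrac12 = 2\pi d_a^2$ after accounting for the $|d\varphi|^2 - 2|\partial_\nu\varphi|^2$ combination. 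Summing over $a\in\Sigma$ and using that the diffuse part $|d\psi|^2dv_g$ carries no concentrated mass gives $\mu=\sum_{a\in\Sigma}2\pi d_a^2\,\delta_a$.

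The main obstacle I anticipate is the passage to the limit on the inner boundary $\partial D_s(a)$ and the interchange of the limits $i\to\infty$, $s\to0$: one needs the boundary terms at radius $s$ to vanish in an appropriate averaged sense, which requires choosing good radii $s=s_i\to0$ along which the $p_i$-energy on $\partial D_{s_i}(a)$ is controlled (a Fubini/pigeonhole argument on dyadic annuli, as in the proof of Proposition~\ref{tubds1}), combined with the pointwise gradient bound $s^{p_i}|du_i|^{p_i}\leq C\theta_{p_i}(u_i,a,s)$ from Corollary~\ref{wkpharmreg}. A secondary technical point is justifying that $\mu$ has \emph{no} concentration away from $\Sigma$ and that the total concentrated mass is captured by the finitely many points $a\in\Sigma$ — this follows from Proposition~\ref{locdim2} (the diffuse part is $|d\psi|^2dv_g$ for harmonic $\psi$) — and that the outer boundary term genuinely converges to the stated value rather than merely bounding it, which is where the explicit structure of $S^1$-valued harmonic maps (so that $v$ near $a$ looks like $(z-a)^{d_a}/|z-a|^{d_a}$ up to a smooth unit factor) is essential.
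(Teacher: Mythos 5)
Your overall strategy — Pohozaev identity plus strong $W^{1,2}$ convergence away from the singular points plus an explicit computation using the structure of $S^1$-valued harmonic maps near a singularity — is the same as the paper's. However, there is a genuine gap in the execution, precisely at the place you flag as the ``main obstacle.''

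You derive the Pohozaev identity on an annulus $A_{s,r}(a)$, which forces you to test the stationarity equation \eqref{stat} against a vector field $X=(z-a)\chi$ with $\chi$ vanishing near $a$; this creates an inner boundary term at $\partial D_s(a)$ and the double limit $s\to 0$, $i\to\infty$ you then have to untangle. That boundary term is \emph{not} small: after passing $i\to\infty$ first and then $s\to 0$, it converges to exactly $2\pi d_a^2$, not to zero, so the expectation that it ``vanishes in an appropriate averaged sense'' is wrong. Worse, the annulus identity is not self-closing: after both limits it produces only the relation
$$\int_{D_r(a)}|d\psi|^2 = B(r) - 2\pi d_a^2, \qquad B(\rho):=\rho\int_{\partial D_\rho(a)}\bigl(|dv|^2-2|\partial_\nu v|^2\bigr),$$
in which $\theta_a$ never appears, so you cannot conclude $\theta_a=2\pi d_a^2$ from it alone. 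The crucial observation, which the paper exploits in Lemma~\ref{pohozaev}, is that since \eqref{stat} holds for \emph{all} smooth compactly supported $X$ and the integrands $|du|^p\,\mathrm{div}(X)$, $|du|^{p-2}\langle du^*du,\nabla X\rangle$ are $L^1$ even through the singularity, one may take $\chi\in C_c^\infty(D_r(a))$ approximating $\chi_{D_r(a)}$ \emph{without} cutting out a ball around $a$. This yields the disk identity $\mu_i(D_r(a))=B_i(r)$ directly, with no inner boundary: the concentrated mass near $a$ enters on the left-hand side, and no $s\to 0$ limit is ever needed. Combined with the annulus version (or simply with the explicit asymptotics of $B(r)$ as $r\to 0$), this gives $\theta_a=2\pi d_a^2$.

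Two secondary points. First, the paper avoids the a.e.-trace convergence you invoke by averaging the disk identity over $r\in[\delta/2,\delta]$, which converts the boundary integral into a bulk integral over the annulus $A_{\delta/2,\delta}(a)$ on which strong $W^{1,2}$ convergence and the uniform $L^\infty$ gradient bound of Corollary~\ref{wkpharmreg} apply cleanly; this is tidier than choosing good radii by pigeonhole. Second, the arithmetic in your key computation is off: for the model map one has $|du|^2-2|\partial_\nu u|^2=d_a^2/\rho^2$ and the boundary integral is over a circle of length $2\pi\rho$, so $B(\rho)=\rho\cdot 2\pi\rho\cdot d_a^2/\rho^2=2\pi d_a^2$ with no factor of $1/2$; the stated answer is right but the intermediate manipulations do not reproduce it.
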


For $p$-energy minimizers with respect to a fixed boundary condition, this result follows from the analysis of \cite{HL2}, in which case all of the degrees $\deg(v,a)$ are either $1$ or $-1$. It is also the immediate analog of the quantization result for 2-dimensional solutions of the Ginzburg-Landau equations in \cite{CM}, though the proof in our setting is much simpler. 

\hspace{3mm} The reason for the relative simplicity in our setting is the form of the Pohozaev identity. In \cite{CM}, on their way to demonstrating the quantization of the energy measures $\mu_{\epsilon}=\frac{|du_{\epsilon}(z)|^2}{|\log\epsilon|}dz$, Comte and Mironescu appeal to the quantization results of \cite{BBH} and \cite{BMR} for the potential measures $\frac{W(u_{\epsilon}(z))}{\epsilon^2}dz$. These quantization results--though by no means trivial--can be derived in a relatively straightforward way from a Pohozaev identity that relates the integral of $\frac{W(u_{\epsilon})}{\epsilon^2}$ on a disk to the behavior of $u_{\epsilon}$ on its boundary. It is then observed in \cite{CM} that the quantization of the potential measure gives strong constraints on the way that the degrees of the maps $u_{\epsilon}$ can vary around clusters of zeroes (or ``vortices") at different scales, which ultimately give rise to the quantization of the energy measures $\mu_{\epsilon}$.

\hspace{3mm} In our setting, the path is much simpler, because the normalized $p$-energy $(2-p)|du|^p$ simultaneously plays the roles occupied by the energy \emph{and} potential measures in the Ginzburg-Landau setting. In particular, we have the following nice Pohozaev-type identity:

\begin{lem}\label{pohozaev} Let $u\in W^{1,p}(D_2(0),S^1)$ be stationary $p$-harmonic on $D_2(0)$. On any annulus
$$A_{r_1,r_2}(a)=D_{r_2}(0)\setminus D_{r_1}(0)\subset D_2(0),$$
we then have
$$\int_{r_1}^{r_2}\left(\int_{D_r(a)}(2-p)|du|^p\right)dr=\int_{A_{r_1,r_2}(a)}|z-a|(|du|^p-p|du|^{p-2}|du(\frac{z-a}{|z-a|})|^2)dz.$$
\end{lem}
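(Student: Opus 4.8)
The plan is to obtain the identity as a direct consequence of the inner-variation equation (\ref{stat}), tested against a well-chosen radial vector field. By translating we may assume $a=0$. First I would note that (\ref{stat}), though stated for smooth $X$, remains valid for any compactly supported Lipschitz vector field $X$: both sides depend continuously on $X$ with respect to the $W^{1,\infty}$-norm (all the relevant densities $|du|^p$ and $|du|^{p-2}du^*du$ lie in $L^1$), so one may simply approximate a Lipschitz $X$ by smooth ones.

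Next, fix $0<r_1<r_2$ with $\overline{D_{r_2}(0)}\subset D_2(0)$ — the borderline case being recovered by letting $r_2$ increase to its maximal value and invoking monotone convergence — and let $\eta\colon[0,\infty)\to[0,\infty)$ be the Lipschitz function equal to $r_2-r_1$ on $[0,r_1]$, to $r_2-s$ on $[r_1,r_2]$, and to $0$ on $[r_2,\infty)$, so that $\eta'=-\mathbf 1_{[r_1,r_2]}$ a.e. The key computation is to substitute $X(z)=\eta(|z|)\,z$ into (\ref{stat}). A direct calculation gives
$$\operatorname{div}(X)=2\eta(|z|)+|z|\,\eta'(|z|),\qquad \nabla X=\eta(|z|)\,I+\frac{\eta'(|z|)}{|z|}\,z\otimes z,$$
and, using $\langle du^*du,I\rangle=|du|^2$ and $\langle du^*du,z\otimes z\rangle=|du(z)|^2=|z|^2\,|du(\tfrac{z}{|z|})|^2$ (with the convention that $|du|^{p-2}|du(\tfrac{z}{|z|})|^2$ means $0$ where $du=0$, as in (\ref{stat})), equation (\ref{stat}) collapses, after collecting the coefficients of $\eta(|z|)$ and $|z|\,\eta'(|z|)$, to
$$\int_{D_2(0)}\eta(|z|)(2-p)|du|^p\,dz=-\int_{D_2(0)}|z|\,\eta'(|z|)\Bigl(|du|^p-p|du|^{p-2}|du(\tfrac{z}{|z|})|^2\Bigr)\,dz.$$

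It then remains only to identify the two sides with the quantities in the statement. On the right, $\eta'=-\mathbf 1_{[r_1,r_2]}$ turns the integral into $\int_{A_{r_1,r_2}(0)}|z|\bigl(|du|^p-p|du|^{p-2}|du(\tfrac{z}{|z|})|^2\bigr)\,dz$, which is precisely the desired right-hand side. On the left, I would apply Tonelli's theorem to the nonnegative function $(2-p)|du|^p$:
$$\int_{r_1}^{r_2}\Bigl(\int_{D_r(0)}(2-p)|du|^p\Bigr)\,dr=\int_{D_2(0)}(2-p)|du|^p\Bigl(\int_{r_1}^{r_2}\mathbf 1_{\{r>|z|\}}\,dr\Bigr)\,dz,$$
and the inner integral equals $r_2-\max(r_1,|z|)$ for $|z|<r_2$ and $0$ otherwise, i.e.\ exactly $\eta(|z|)$. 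Combining these two observations, and replacing $z$ by $z-a$ throughout for general $a$, gives the claimed identity.

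I do not expect a serious obstacle here; the only points requiring a word of care are the extension of (\ref{stat}) to Lipschitz test fields (routine density) and the interpretation of $|du|^{p-2}$ on $\{du=0\}$, both of which are standard. If one prefers to work exclusively with smooth vector fields, one may instead mollify $\eta$, carry out the same computation, and pass to the limit; the conclusion is unchanged.
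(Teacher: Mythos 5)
Your proof is correct and is essentially the same as the paper's: both test the inner-variation equation (\ref{stat}) against radial vector fields of the form (radial function) $\cdot\,(z-a)$, and the resulting algebra --- collecting coefficients so that $\operatorname{div}(X)$ contributes $n\eta + |z|\eta'$ and the $du^*du$ term contributes $\eta|du|^2 + |z|\eta'|du(\tfrac{z}{|z|})|^2$ --- is the two-dimensional monotonicity computation. The only difference is packaging: the paper first derives the boundary identity $\int_{D_r(a)}(2-p)|du|^p = r\int_{\partial D_r(a)}(|du|^p - p|du|^{p-2}|du(\tfrac{z-a}{|z-a|})|^2)$ for a.e.\ $r$ (by approximating $\chi_{D_r(a)}$), and then integrates over $r\in[r_1,r_2]$, whereas you integrate in $r$ first by feeding in the single Lipschitz ``ramp'' $\eta(|z|)z$ and then apply Tonelli to recognize the left-hand side. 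That sidesteps the a.e.-in-$r$ step and is arguably a slightly cleaner route to the same end.
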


\begin{proof} The identity is simply a repackaging of the monotonicity formula in dimension two: By testing the inner variation equation 
$$\int |du|^pdiv(X)-p|du|^{p-2}\langle du^*du,\nabla X\rangle=0$$
against vector fields of the form $X(z)=\psi(z)(z-a)$ for test functions $\psi\in C_c^{\infty}(D_r(a))$ approximating the characteristic function $\chi_{D_r(a)}$, we find that
\begin{equation}
\int_{D_r(a)} (2-p)|du|^p=r\int_{\partial D_r(a)}(|du|^p-p|du|^{p-2}|du(\frac{z-a}{|z-a|})|^2
\end{equation}
for almost every $r\in [r_1,r_2]$. Integrating over $[r_1,r_2]$ then gives the desired equation.
\end{proof}

With this identity in hand, we can now argue in the spirit of \cite{BBH},\cite{BMR} to prove Proposition \ref{l1bdcase}:

\begin{proof}{(Proof of Proposition \ref{l1bdcase})}

\hspace{3mm} Let 
$$\Sigma=\{a_1,\ldots,a_k\},$$ 
so that the limiting map $v(z)$ satisfies
$$d^*(jv)=0\text{ and }T(jv)=\Sigma_{\ell=1}^k2\pi \kappa_{\ell}\delta_{a_{\ell}},$$
where $\kappa_{\ell}=\deg(v,a_{\ell})$ denotes the degree of $v$ about $a_{\ell}$. Letting $\bar{v}$ be the map given by
$$\bar{v}(z):=\Pi_{\ell=1}^k\left(\frac{z-a_{\ell}}{|z-a_{\ell}|}\right)^{\kappa_{\ell}},$$
we observe that 
$$d^*j\bar{v}=0\text{ and }T(j\bar{v})=T(jv),$$
so the difference $jv-j\bar{v}$ is strongly harmonic. In particular, it follows that
\begin{equation}
v=e^{i\varphi}\bar{v}
\end{equation}
for some harmonic function $\varphi \in C^{\infty}(D_2(0))$. 

\hspace{3mm} Now, set 
$$\delta_0:=\min\{|a_{\ell}-a_m|\mid 1\leq \ell<m\leq k\},$$
so that the density $\theta_{\ell}$ of $\mu$ at $a_{\ell}$ is given by
$$\mu(D_r(a_{\ell}))=\theta_{\ell}$$
for every $r\in (0,\delta_0)$. For any $\delta \in (0,\delta_0)$, it then follows from Lemma \ref{pohozaev} that
\begin{eqnarray*}
\theta_{\ell}&=&\frac{2}{\delta}\int_{\delta/2}^{\delta}\mu(D_r(a_{\ell}))dr\\
&=&\frac{2}{\delta}\lim_{i\to\infty}\int_{\delta/2}^{\delta}\mu_i(D_r(a_{\ell}))dr\\
&=&\frac{2}{\delta}\lim_{i\to\infty}\int_{A_{\delta/2,\delta}(a_{\ell})}|z-a_{\ell}|(|du_i|^p-p_i|du_i|^{p_i-2}|du_i(\frac{z-a_{\ell}}{|z-a_{\ell}|})|^2)dz.
\end{eqnarray*}
On the other hand, we also know that $u_i\to v$ strongly in $W^{1,2}(A_{\delta/2,\delta}(a_{\ell}))$ and $\|du_i\|_{L^{\infty}(A_{\delta/2,\delta}(a_{\ell}))}$ is uniformly bounded as $i\to\infty$,
so it follows that
\begin{eqnarray*}
\theta_{\ell}&=&\frac{2}{\delta}\lim_{i\to\infty}\int_{A_{\delta/2,\delta}(a_{\ell})}|z-a_{\ell}|(|du_i|^p-p_i|du_i|^{p_i-2}|du_i(\frac{z-a_{\ell}}{|z-a_{\ell}|})|^2)dz\\
&=&\frac{2}{\delta}\int_{A_{\delta/2,\delta}(a_{\ell})}|z-a_{\ell}|(|dv|^2-2|dv(\frac{z-a_{\ell}}{|z-a_{\ell}|})|^2dz.
\end{eqnarray*}

\hspace{3mm} Since $v=e^{i\varphi}\bar{v}$, we can expand $dv$ as
$$dv(z)=v(z)\cdot (id\varphi+\Sigma_{\ell=1}^k\kappa_{\ell}\frac{\overline{z-a_{\ell}}}{|z-a_{\ell}|^2}P_{z-a_{\ell}}^{\perp}),$$
where for $0\neq w\in \mathbb{R}^2$ we denote by $P_w^{\perp}$ projection onto the line perpendicular to $w$. In particular, if $z\in D_{\delta}(a_{\ell})$ for $\delta<\frac{\delta_0}{2}$, then $|z-a_m|>\delta_0-\delta>\frac{\delta_0}{2}$ for every $m\neq \ell$, and it follows that
$$|dv(z)-\kappa_{\ell}\frac{\overline{z-a_{\ell}}}{|z-a_{\ell}|^2}P_{z-a_{\ell}}^{\perp}|\leq \|d\varphi\|_{L^{\infty}}+\Sigma_{m\neq \ell}\frac{2|\kappa_m|}{\delta_0}=:K.$$
Combining this with the obvious estimate
$$|dv(z)|\leq \frac{K'}{|z-a_{\ell}|}\text{ on }D_{\delta}(a_{\ell})$$
(where $K'$ of course depends on $v$), we see that, on $D_{\delta}(a_{\ell})$,
\begin{equation}
||dv(z)|^2-\frac{\kappa_{\ell}^2}{|z-a_{\ell}|^2}|\leq \frac{K''}{|z-a_{\ell}|}
\end{equation}
and
\begin{equation}
|dv(\frac{z-a_{\ell}}{|z-a_{\ell}|})|^2\leq \frac{K''}{|z-a_{\ell}|}.
\end{equation}
In particular, on the annulus $A_{\delta/2,\delta}(a_{\ell})$, since
$$\int_{A_{\delta/2,\delta}(a_{\ell})}|z-a_{\ell}|\frac{\kappa_{\ell}^2}{|z-a_{\ell}|^2}=\pi\kappa_{\ell}^2\delta,$$
we can apply the preceding estimates to our computation of $\theta_{\ell}$ to conclude that
\begin{eqnarray*}
|\theta_{\ell}-2\pi\kappa_{\ell}^2|&=&|\frac{2}{\delta}\int_{A_{\delta/2,\delta}(a_{\ell})}|z-a_{\ell}|(|dv|^2-\frac{\kappa_{\ell}^2}{|z-a_{\ell}|^2}-2|dv(\frac{z-a_{\ell}}{|z-a_{\ell}|})|^2)dz\\
&\leq & \frac{2}{\delta}\int_{A_{\delta/2,\delta}(a_{\ell})}|z-a_{\ell}|\frac{K''}{|z-a_{\ell}|}\\
&=&2\pi K'' \delta.
\end{eqnarray*}
Since $\delta>0$ was arbitrary, it follows finally that
$$\theta_{\ell}=2\pi \kappa_{\ell}^2,$$
which is precisely what we wanted to show.
\end{proof}

Combining Proposition \ref{l1bdcase} with a simple contradiction argument, and scaling, we can formulate the following lemma:

\begin{cor}\label{l1bdquant} For any $\gamma>0$ and $\Lambda<\infty$, there exists $q(\gamma,\Lambda)\in (1,2)$ such that if $p>q$, and $u\in W^{1,p}(D_{2r}(x),S^1)$ is stationary $p$-harmonic with
\begin{equation}
(2-p)\theta_p(u,x,2r)+\frac{1}{r}\int_{D_{2r}(x)}|du|\leq \Lambda
\end{equation}
and $Sing(u)\cap D_{2r}(x)\subset D_r(x)$, then
\begin{equation}
dist((2-p)\theta_p(u,x,r),2\pi\mathbb{Z})<\gamma.
\end{equation}
\end{cor}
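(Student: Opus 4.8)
The plan is to deduce Corollary \ref{l1bdquant} from Proposition \ref{l1bdcase} by a standard contradiction-and-compactness argument. Suppose the statement fails; then there exist $\gamma>0$, $\Lambda<\infty$, a sequence $p_i\uparrow 2$, radii $r_i>0$, points $x_i$, and stationary $p_i$-harmonic maps $u_i\in W^{1,p_i}(D_{2r_i}(x_i),S^1)$ satisfying the two hypotheses
$$(2-p_i)\theta_{p_i}(u_i,x_i,2r_i)+\frac{1}{r_i}\int_{D_{2r_i}(x_i)}|du_i|\leq\Lambda,\qquad Sing(u_i)\cap D_{2r_i}(x_i)\subset D_{r_i}(x_i),$$
but with
$$\dist\bigl((2-p_i)\theta_{p_i}(u_i,x_i,r_i),\,2\pi\mathbb{Z}\bigr)\geq\gamma\quad\text{for all }i.$$
First I would rescale and translate so that $x_i=0$ and $r_i=1$: set $\tilde u_i(z):=u_i(x_i+r_i z)$, which is stationary $p_i$-harmonic on $D_2(0)$ with respect to the rescaled (flat, since we reduced to the flat case) metric, with $(2-p_i)\int_{D_2(0)}|d\tilde u_i|^{p_i}\leq\Lambda$, with $\sup_i\|d\tilde u_i\|_{L^1(D_2(0))}\leq\Lambda$, and with $Sing(\tilde u_i)\subset D_1(0)$; moreover $(2-p_i)\theta_{p_i}(\tilde u_i,0,1)$ stays a fixed positive distance $\geq\gamma$ from $2\pi\mathbb{Z}$.

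Next I would apply Proposition \ref{locdim2} to the sequence $\tilde u_i$: after passing to a subsequence, the measures $\tilde\mu_i=(2-p_i)|d\tilde u_i|^{p_i}dz$ converge in $(C^0_c(D_2))^*$ to $\mu=|d\psi|^2 dz+\sum_{a\in\Sigma}\theta_a\delta_a$ with $\psi$ harmonic and $\Sigma$ the Hausdorff limit of $Sing(\tilde u_i)$ inside $\overline{D_1(0)}$, and (using the $L^1$ bound) the maps $\tilde u_i$ converge weakly in $W^{1,q}(D_{3/2})$ and strongly in $W^{1,2}_{loc}(D_{3/2}\setminus\Sigma)$ to a harmonic map $v\in C^\infty(D_{3/2}\setminus\Sigma,S^1)$. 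Proposition \ref{l1bdcase} then identifies the limit measure completely: $\mu=\sum_{a\in\Sigma}2\pi\deg(v,a)^2\delta_a$ on $D_{3/2}$, so in particular the diffuse part $|d\psi|^2 dz$ vanishes on $D_{3/2}$, and for every radius $\rho\in(0,1)$ with $\mu(\partial D_\rho(0))=0$ and $\Sigma\subset D_\rho(0)$ we have $\mu(D_\rho(0))=\sum_{a\in\Sigma}2\pi\deg(v,a)^2\in 2\pi\mathbb{Z}$. Taking $\rho=1$ (perturbing slightly if $\mu$ charges $\partial D_1$, which it cannot since $\Sigma\subset\overline{D_1}$ and the only atoms are on $\Sigma$; alternatively take $\rho\uparrow 1$ and use that $\mu(\overline{D_1}\setminus D_\rho)\to 0$), weak convergence gives $(2-p_i)\theta_{p_i}(\tilde u_i,0,1)=\tilde\mu_i(D_1(0))\to\mu(D_1(0))\in 2\pi\mathbb{Z}$, contradicting the assumption that this quantity stays $\geq\gamma$ away from $2\pi\mathbb{Z}$.

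The main obstacle is the boundary-of-the-disk bookkeeping: $\theta_{p_i}(\tilde u_i,0,1)$ involves the integral over the \emph{closed} ball $D_1(0)$, while weak-$*$ convergence of measures only controls open sets or continuity sets, and $\Sigma$ may touch $\partial D_1(0)$, so one must argue that the limit atoms sitting exactly on $\partial D_1$ contribute in a controlled way — or, more cleanly, exploit that by the almost-monotonicity of $\theta_p$ (Lemma \ref{mono}) and the hypothesis $Sing(\tilde u_i)\subset D_1(0)$ we may replace the radius $1$ by a slightly larger radius $1<\rho<3/2$ without changing the quantization conclusion, since $\tilde\mu_i(D_\rho\setminus D_1)\to\mu(D_\rho\setminus D_1)=0$ (the limit having no atoms outside $\overline{D_1}$ and zero diffuse part in $D_{3/2}$). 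A secondary technical point is verifying that the rescaled metrics (needed only in the general version, not the flat case as stated in Theorem \ref{sec6thm}) converge in $C^\infty$ to the flat one so that Propositions \ref{locdim2} and \ref{l1bdcase} apply; in the flat setting of the corollary as written this is automatic. Everything else — extracting subsequences, passing to the limit in the energy bound and the $L^1$ bound, and reading off $2\pi\mathbb{Z}$-valuedness from Proposition \ref{l1bdcase} — is routine.
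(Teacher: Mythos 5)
Your overall approach — rescale so that $r=1$ and $x=0$, apply Propositions \ref{locdim2} and \ref{l1bdcase} to identify the limit measure as $\mu=\sum_{a\in\Sigma}2\pi\deg(v,a)^2\delta_a$, and contradict — is exactly what the paper intends (the paper cites only ``a simple contradiction argument, and scaling,'' giving no further detail). You also correctly identified the one real difficulty: $\Sigma$ is a priori only contained in $\overline{D_1(0)}$ and may meet $\partial D_1(0)$. But your resolution of that difficulty contains the gap. You assert first that $\mu$ ``cannot'' charge $\partial D_1$ because $\Sigma\subset\overline{D_1}$ — this is a non sequitur, since $\Sigma\subset\overline{D_1}$ is fully consistent with $\Sigma\cap\partial D_1\neq\varnothing$. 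Then in your ``more cleanly'' alternative you claim $\mu(D_\rho\setminus D_1)=0$ for $1<\rho<3/2$ on the grounds that $\mu$ has no atoms ``outside $\overline{D_1}$''; but $\partial D_1\subset\overline{D_1}$ and also $\partial D_1\subset D_\rho\setminus D_1$, so an atom on $\partial D_1$ makes $\mu(D_\rho\setminus D_1)>0$. Once an atom sits on $\partial D_1$, weak-$*$ convergence only gives $\mu(D_1)\leq\liminf_i\tilde\mu_i(D_1)\leq\limsup_i\tilde\mu_i(D_1)\leq\mu(\overline{D_1})$, and these two bounds differ by $\mu(\partial D_1)$, so you cannot conclude that $(2-p_i)\theta_{p_i}(\tilde u_i,0,1)=\tilde\mu_i(D_1)$ is forced into $2\pi\mathbb{Z}$.

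This is not a bookkeeping issue you can fix within the hypotheses as literally written: take $u_p(z)=(z-a_p)/|z-a_p|$ with $a_p=(1-e^{-1/(2-p)},0)\in D_1(0)$. This $u_p$ is stationary $p$-harmonic on $D_2(0)$, with $Sing(u_p)=\{a_p\}\subset D_1(0)$, $(2-p)\theta_p(u_p,0,2)\to 2\pi$, and $\int_{D_2}|du_p|\leq 8\pi$, so the hypotheses hold with a fixed $\Lambda$. Writing $(2-p)\theta_p(u_p,0,1)=\int_0^{2\pi}t_1(\theta)^{2-p}\,d\theta$, where $t_1(\theta)$ is the distance from $a_p$ to $\partial D_1(0)$ in direction $\theta$, one checks for each fixed $\theta$ that $t_1(\theta)^{2-p}\to e^{-1}$ when $\cos\theta>0$ and $t_1(\theta)^{2-p}\to 1$ when $\cos\theta<0$, so by dominated convergence $(2-p)\theta_p(u_p,0,1)\to\pi(1+e^{-1})\notin 2\pi\mathbb{Z}$. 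So the corollary really needs a hypothesis that keeps the rescaled singular set in a fixed compact subset of $D_1(0)$ — say $Sing(u)\cap D_{2r}(x)\subset D_{r/2}(x)$ — and that is how it is actually used in the proof of Theorem \ref{sec6thm} (the constant $A$ in $r_i=A(2-p_i)^{1/p_i}$ can be taken large enough that every singular point near $x_\ell^i$ lies within a fixed fraction of $r_i$). Under such a strengthened hypothesis your argument closes with no further ado: $\Sigma$ lies in a fixed compact subset of $D_1(0)$, hence $\mu(\partial D_1)=0$, and weak-$*$ convergence gives $\tilde\mu_i(D_1)\to\mu(D_1)\in 2\pi\mathbb{Z}$ directly.
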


\hspace{3mm} We now remove the requirement of a uniform $W^{1,1}$ bound by arguing that, in the general setting of Theorem \ref{sec6thm}, the normalized energy measures $\mu_i$ are negligible on the complement of a collection of disks satisfying the conditions of Corollary \ref{l1bdquant}.

\begin{proof}{(Proof of Theorem \ref{sec6thm})}

\hspace{3mm} Let $u_i\in W^{1,p_i}(D_2(0),S^1)$ be a sequence of stationary $p_i$-harmonic maps as given, with
$$\mu_i\to \theta\delta_0,$$
and
$$E_{p_i}(u_i,D_2(0))\leq \frac{\Lambda}{2-p_i}.$$
We consider now a local version of the Hodge decomposition of Section \ref{hodgeests}; choosing some cutoff function $\chi \in C_c^{\infty}(D_{7/4}(0))$ such that $\chi\equiv 1$ on $D_{5/4}(0)$, we define
$$\xi_i:=-*\Delta^{-1}(\chi T(u_i))=-\langle (\chi T(u_i))(y),G(x,y)\rangle dx^1\wedge dx^2$$
and
$$\varphi_i:=\Delta^{-1}(\chi div(ju_i))=\langle \chi div([1-|ju_i|^{p-2}]ju_i)(y),G(x,y)\rangle,$$
where $G(x,y)=\frac{-1}{2\pi}\log|x-y|$ is the two-dimensional Green's function. 

\hspace{3mm} Writing
$$h_i=ju_i-d^*\xi_i-d\varphi_i,$$
we see that (distributionally)
$$div(h_i)=(1-\chi)div(ju_i)=0$$
and
$$d^*dh_i=d^*([1-\chi]T(u_i)).$$
In particular, $h_i$ is harmonic on the disk $D_{5/4}(0)$ (where $\chi\equiv 1$), and it follows that
\begin{equation}\label{obvhlinfty}
\|h_i\|_{L^{\infty}(D_{1/2}(0))}\leq C\|h_i\|_{L^{p_i}(D_1(0)\setminus D_{3/4}(0))}.
\end{equation}

\hspace{3mm} Next, consider the $(2-p_i)^{1/p_i}$-neighborhoods
$$U_i:=\mathcal{N}_{(2-p_i)^{1/p_i}}(Sing(u_i))$$
about the singular sets $Sing(u_i)$. Our goal now is to show that 
\begin{equation}\label{noengoffu}
\lim_{i\to\infty} \mu_i(D_{1/2}(0)\setminus U_i)=0,
\end{equation}
and that $U_i$ is contained in a finite union of disks satisfying the hypotheses of Corollary \ref{l1bdquant}. To this  end, we first observe that, by Corollary \ref{tubdsscaled} and Lemma \ref{wgradestlem} of the appendix, $\nabla\xi$ satisfies the pointwise bound
\begin{equation}
|\nabla \xi_i|(x)\leq \frac{C}{dist(x,Sing(u_i))}
\end{equation}
for $x\in D_1(0)$. Putting this together with the volume estimate of Lemma \ref{singvollem},
we find that
\begin{eqnarray*}
\int_{D_{1/2}(0)\setminus U_i}|\nabla \xi_i|^{p_i}&\leq&\Sigma_{j=1}^{\lfloor \frac{1}{p_i}|\log_2(2-p_i)|\rfloor}\int_{\mathcal{N}_{2^{-j}}(Sing(u_i))\setminus \mathcal{N}_{2^{-j-1}}(Sing(u_i))}|du_i|^{p_i}\\
&\leq &\Sigma_{j=1}^{\lfloor \frac{1}{p_i}|\log_2(2-p_i)|\rfloor}C2^{-jp_i}\cdot 2^{(j+1)p_i},
\end{eqnarray*}
and therefore
\begin{equation}\label{gradxioffu}
\int_{D_{1/2}(0)\setminus U_i}|\nabla\xi_i|^{p_i}\leq C |\log(2-p_i)|.
\end{equation}

\hspace{3mm} For $d\varphi_i$, the arguments in the proofs of Proposition \ref{hodgeglob} and Lemma \ref{offsingest} again yield the local estimates
\begin{equation}\label{phiw1qloc}
\|\varphi_i\|_{W^{1,q}(D_1(0))}\leq C(q)(2-p_i)^{1-1/p_i}|\log(2-p_i)|
\end{equation}
for $q\in (1,p_i)$ and
\begin{equation}\label{firstpassdphi}
\|(2-p_i)^{1/p_i}d\varphi_i\|^2_{L^2(D_1(0)\setminus U_i)}\leq C (2-p_i)^{2-2/p_i}|\log(2-p_i)|^2,
\end{equation}
respectively. In particular, rearranging (\ref{firstpassdphi}) and recalling that $(2-p_i)^{p_i-2}$ is uniformly bounded as $i\to\infty$, we see that
\begin{equation}\label{dphioffu}
\int_{D_1(0)\setminus U_i}|d\varphi_i|^2\leq C |\log(2-p_i)|^2.
\end{equation}
Now, to estimate $h_i$, we observe that
\begin{eqnarray*}
\|h_i\|_{L^{p_i}(D_1(0)\setminus D_{3/4}(0))}&\leq &\|du_i\|_{L^{p_i}(D_1(0)\setminus D_{3/4}(0))}+\|d^*\xi_i\|_{L^{p_i}(D_1(0)\setminus D_{3/4}(0))}\\
&&+\|d\varphi_i\|_{L^{p_i}(D_1(0)\setminus D_{3/4}(0))}\\
&\leq &(2-p_i)^{-1/p_i}\mu_i(D_1(0)\setminus D_{3/4}(0))^{1/p_i}+\|d^*\xi_i\|_{L^{p_i}(D_1(0)\setminus U_i)}\\
&&+\|d\varphi_i\|_{L^{p_i}(D_1(0)\setminus D_{3/4}(0))}\\
&\leq &(2-p_i)^{-1/p_i}\mu_i(D_1(0)\setminus D_{3/4}(0))^{1/p_i}+C|\log(2-p_i)|^{2/p_i}.
\end{eqnarray*}
Since $\mu=\lim\mu_i$ vanishes on compact subsets of $D_1(0)\setminus \{0\}$ by assumption, it then follows that
$$\|h_i\|_{L^{p_i}(D_1(0)\setminus D_{3/4}(0))}=o(\frac{1}{2-p_i});$$
and by (\ref{obvhlinfty}), we therefore have
\begin{equation}\label{hlinfty2}
\|h\|^{p_i}_{L^{\infty}(D_{1/2}(0))}\leq \frac{\delta_i}{2-p_i},
\end{equation}
where $\lim_{i\to\infty}\delta_i=0$.

\hspace{3mm} Putting together the estimates (\ref{gradxioffu}), (\ref{dphioffu}), and (\ref{hlinfty2}), we see finally that
\begin{eqnarray*}
\lim_{i\to\infty}\mu_i(D_{1/2}(0)\setminus U_i)&=&\lim_{i\to\infty}(2-p_i)\int_{D_{1/2}(0)\setminus U_i}|du_i|^{p_i}\\
&\leq &C\limsup_{i\to\infty}\int_{D_{1/2}(0)\setminus U_i}(2-p_i)[|d^*\xi_i|^{p_i}+|d\varphi_i|^{p_i}+|h_i|^{p_i}]\\
&\leq &C\limsup_{i\to\infty} ((2-p_i)|\log(2-p_i)|^2+\delta_i)\\
&=&0,
\end{eqnarray*}
confirming (\ref{noengoffu}).

\hspace{3mm} Next, as in the proof of Lemma \ref{singvollem}, we know from a simple Vitali covering argument that 
$$U_i\subset \bigcup_{\ell=1}^{k_i}D_{3(2-p_i)^{1/p_i}}(x^i_{\ell})$$ 
for some $x^i_1,\ldots,x^i_{k_i}\in Sing(u_i)$ such that 
$$D_{(2-p_i)^{1/p_i}}(x^i_{\ell})\cap D_{(2-p_i)^{1/p_i}}(x^i_m)=\varnothing\text{ when }\ell\neq m,$$
and it follows from Proposition \ref{sharpdensbd} that 
$$[(2-p_i)^{1/p_i}]^{p_i-2}\mu_i(U_i)\geq 2\pi k_i.$$
In particular, $k_i$ is uniformly bounded independent of $i$, so passing to a subseqence, we can take $k_i=k$ to be constant. Moreover, setting
$$\alpha_{\ell m}^i:=(2-p_i)^{-1/p_i}dist(x^i_{\ell},x^i_m),$$
we can pass to a further subsequence for which the (possibly infinite) limits
$$\alpha_{\ell m}=\lim_{i\to\infty}\alpha_{\ell m}^i$$
exist. Relabeling indices if necessary, there is then some $m_0\leq k$ such that 
$$\alpha_{\ell m}=\infty\text{ for }1\leq \ell<m\leq m_0,$$
and for every $m>m_0$, there is some $1\leq \ell\leq m_0$ for which $\alpha_{\ell m}<\infty$. 

\hspace{3mm} Now, let
$$A:=\max(\{3\} \cup\{2\alpha_{\ell m}\mid \alpha_{\ell m}<\infty\}),$$
$$r_i:=A(2-p_i)^{1/p_i},$$
and for $1\leq \ell \leq m_0$, define the disks
$$D_{i,\ell}:=D_{r_i}(x_{\ell}^i).$$
For $i$ sufficiently large, we then see that
$$U_i\subset D_{i,1}\cup\cdots\cup D_{i,m_0},\text{\hspace{3mm} }D_{i,\ell}\cap D_{i,m}=\varnothing\text{ if }\ell\neq m,$$
and 
$$Sing(u_i)\cap D_{2r_i}(x_{\ell}^i)\subset D_{r_i}(x^i_{\ell}).$$
In particular, since 
\begin{equation}\label{obvengbd}
\limsup_{i\to\infty}(2-p_i)\theta_{p_i}(u_i,x_{\ell}^i,2r_i)\leq 2\theta<\infty
\end{equation}
by the monotonicity formula, our disks will satisfy the conditions of Corollary \ref{l1bdquant} for some $\Lambda$, once we show that
\begin{equation}\label{wantl1bd}
\limsup_{i\to\infty}r_i^{-1}\|du_i\|_{L^1(D_{2r_i}(x_{\ell}^i))}<\infty.
\end{equation}

\hspace{3mm} To establish (\ref{wantl1bd}), we consider separately the components $h_i$, $d\varphi_i$, and $d^*\xi_i$ of the local Hodge decomposition. For $h_i$, we have seen already that
$$\|h_i\|_{L^{\infty}}\leq \delta_i^{1/p_i}(2-p_i)^{-1/p_i}\leq \delta_i^{1/p_i}Ar_i^{-1},$$
where $\delta_i\to 0$, so that
\begin{equation}
r_i^{-1}\int_{D_{2r_i}(x_{\ell}^i)}|h_i|\leq \delta_i^{1/p_i} Ar_i^{-2}\cdot 4\pi r_i^2\leq 1
\end{equation}
for $i$ sufficiently large. For $d\varphi_i$, recall that, for $q<p_i$,
$$\|d\varphi_i\|_{L^q(D_1(0))}\leq C(q)(2-p_i)^{1-1/p_i}|\log(2-p_i)|,$$
so that
\begin{eqnarray*}
r_i^{-1}\int_{D_{2r_i}(x_{\ell}^i)}|d\varphi_i|&\leq &r_i^{-1}\|d\varphi_i\|_{L^q(D_1(0))}(4\pi r_i^2)^{1-1/q}\\
&\leq & r_i^{-1}C(q)(2-p_i)^{1-1/p_i}|\log(2-p_i)|r_i^{2-2/q}\\
&\leq &C(q) A^{1-2/q}(2-p_i)^{1-2/(p_iq)}|\log(2-p_i)|.
\end{eqnarray*}
For $i$ sufficiently large, we can take $p_i>q=\frac{3}{2}$ in the estimate above, to obtain
\begin{eqnarray*}
\limsup_{i\to\infty} r_i^{-1}\int_{D_{2r_i}(x^i_{\ell})}|d\varphi_i|&\leq & C(q)A^{1-2/q}\lim_{i\to\infty}(2-p_i)^{1-8/9}|\log(2-p_i)|\\
&=&C\lim_{i\to\infty}(2-p_i)^{1/9}|\log(2-p_i)|\\
&=&0.
\end{eqnarray*}

\hspace{3mm} Next, employing the pointwise gradient estimate (\ref{gradxioffu}) for $\xi_i$ with Lemma \ref{singvollem}, we see that
\begin{eqnarray*}
r_i^{-1}\int_{D_{2r_i}(x^i_{\ell})}|d^*\xi_i|&\leq &r_i^{-1}\int_{\mathcal{N}_{2r_i}(Sing(u_i))}|d^*\xi|\\
&=&r_i^{-1}\Sigma_{j=0}^{\infty}\int_{\mathcal{N}_{2^{1-j}r_i}(Sing(u_i))\setminus \mathcal{N}_{2^{-j}r_i}(Sing(u_i))}|d^*\xi|\\
\text{( by (\ref{gradxioffu}) )}&\leq &r_i^{-1}\Sigma_{j=0}^{\infty}\int_{\mathcal{N}_{2^{1-j}r_i}(Sing(u_i))\setminus \mathcal{N}_{2^{-j}r_i}(Sing(u_i))}\frac{C}{2^{-j}r_i}\\
\text{( by Lemma \ref{singvollem} )}&\leq &r_i^{-1}\Sigma_{j=0}^{\infty}\frac{C}{2^{-j}r_i}\cdot C(2^{1-j}r_i)^{p_i}\\
&\leq & Cr_i^{p_i-2}\Sigma_{j=0}^{\infty}(2^{1-p_i})^j.
\end{eqnarray*}
And since
$$\Sigma_{j=0}^{\infty}(2^{1-p_i})^j=\frac{2^{p_i}}{2^{p_i}-2}\to 2$$
and
$$r_i^{p_i-2}=A^{p_i-2}(2-p_i)^{\frac{p_i-2}{p_i}}\to 1$$
as $i\to\infty$, it follows that
\begin{equation}\label{almostthere}
\limsup_{i\to\infty}r_i^{-1}\int_{D_{2r_i}(x^i_{\ell})}|d^*\xi_i|<\infty.
\end{equation}
Combining this with the preceding estimates for $h_i$ and $d\varphi_i$, we see that (\ref{wantl1bd}) indeed holds.

\hspace{3mm} Finally, letting
$$\Lambda:=1+\max_{1\leq \ell \leq m_0}\limsup_{i\to\infty}[(2-p_i)\theta_{p_i}(u_i,x_{\ell}^i,2r_i)+r_i^{-1}\int_{D_{2r_i}(x_{\ell}^i)}|du_i|],$$
and choosing an arbitrary $\gamma>0$, it follows from Corollary \ref{l1bdquant} that for $i$ sufficiently large,
\begin{equation}
dist((2-p_i)\theta_{p_i}(u_i,x_{\ell}^i,r_i),2\pi \mathbb{Z})<\gamma;
\end{equation}
in particular, we deduce that
\begin{equation}\label{densnearint}
\limsup_{i\to\infty}dist((2-p_i)\Sigma_{\ell=1}^{m_0}\theta_{p_i}(u_i,x^i_{\ell},r_i),2\pi\mathbb{Z})=0.
\end{equation}
Now, by the disjointness of the disks $\{D_{i,\ell}\}_{\ell=1}^{m_0}$, we know that
$$(2-p_i)\Sigma_{\ell=1}^{m_0}\theta_{p_i}(u_i,x^i_{\ell},r_i)=r_i^{p_i-2}\mu_i(\bigcup_{\ell=1}^{m_0}D_{i,\ell}),$$
and since $\lim_{i\to\infty}r_i^{p_i-2}=1$, it follows that
$$\lim_{i\to\infty}(2-p_i)\Sigma_{\ell=1}^{m_0}\theta_{p_i}(u_i,x^i_{\ell},r_i)=\lim_{i\to\infty}\mu_i(\bigcup_{\ell=1}^{m_0}D_{i,\ell}).$$
On the other hand, since the disks $D_{i,\ell}$ cover $U_i$, we know from (\ref{noengoffu}) that
\begin{eqnarray*}
\lim_{i\to\infty}\mu_i(D_{1/2}(0))&=&\lim_{i\to\infty} (\mu_i(D_{1/2}(0)\setminus \bigcup_{\ell} D_{i,\ell})+\mu_i(\bigcup_{\ell}D_{i,\ell}))\\
&=&\lim_{i\to\infty}\mu_i(\bigcup_{\ell}D_{i,\ell})\\
&=&\lim_{i\to\infty}(2-p_i)\Sigma_{\ell=1}^{m_0}\theta_{p_i}(u_i,x^i_{\ell},r_i).
\end{eqnarray*}
By (\ref{densnearint}), it then follows that
$$\theta=\lim_{i\to\infty}\mu_i(D_{1/2}(0))\in 2\pi\mathbb{N},$$
as desired.
\end{proof}

\begin{remark} In higher dimensions, one would like to show, analogously, that for a sequence $u_i\in W^{1,p_i}(B_2^n(0),S^1)$ of stationary $p_i$-harmonic maps with energy concentrating along an $(n-2)$-plane $\mathcal{L}^{n-2}$, the limiting measure $\mu$ must have the form
$$\mu=2\pi m\cdot\mathcal{H}^{n-2}\lfloor \mathcal{L}$$
for some $m\in \mathbb{N}$. As in the proof of Theorem \ref{sec6thm}, it is possible to reduce the problem to the case where the maps $u_i$ converge away from $\mathcal{L}$, but for the moment we have no higher-dimensional analog of Proposition \ref{l1bdcase}. 

\hspace{3mm} In particular, if one na\"{i}vely attempts to generalize the Pohozaev identity to this setting (for instance, by testing the component of the position vector field perpendicular to $\mathcal{L}$ in the inner variation equation), the $\mathcal{L}$ components $|du_i(\mathcal{L})|$ of the derivatives of $u_i$ invariably get in the way. And while it is easy to see that $(2-p_i)\int|du_i(\mathcal{L})|^{p_i}\to 0$ under these assumptions, one would need the much stronger vanishing $\int |du_i(\mathcal{L})|^{p_i}\to 0$ for the arguments of this section to work. We are nonetheless optimistic about the prospect of extending the integrality result to higher dimensions, but a proof will undoubtedly require some interesting new ideas. 

\end{remark}

\section{Natural Min-Max Constructions}\label{sec7}

\subsection{Generalized Ginzburg-Landau Functionals}\hfill

\hspace{3mm} Let $M^n$ and $N$ be compact Riemannian manifolds, with $N$ isometrically embedded in some Euclidean space $\mathbb{R}^L$. For $1<p\leq n$ and $\epsilon>0$, Wang studies in \cite{Wang} the generalized Ginzburg-Landau functionals 
$$E_{p,\epsilon}: W^{1,p}(M,\mathbb{R}^L)\to\mathbb{R}$$
given by
$$E_{p,\epsilon}(u)=\int_M(|du|^p+\epsilon^{-p}F(u)),$$
where the function $F:\mathbb{R}^L\to \mathbb{R}$ has the form $F(y)=\lambda(dist(y,N)^2)$ for a function $\lambda\in C^{\infty}(\mathbb{R})$ satisfying $\lambda'\geq 0$, $\lambda''\geq 0$. 
$$\lambda(t)=t\text{ for }t\leq \delta_N^2,\text{ and }\lambda(t)=4\delta_N^2\text{ for }t\geq 4\delta_N^2,$$
(Here, $\delta_N>0$ is chosen such that nearest-point projection to $N$ is well-defined and smooth on the $2\delta_N$-neighborhood of $N$ \cite{Wang}.) Thus, as $\epsilon\to 0$, the potential term in the energies $E_{p,\epsilon}$ penalizes deviation from the target manifold $N$, while for $N$-valued maps $u$, one simply recovers the $p$-energy $E_{p,\epsilon}(u)=E_p(u)$.

\hspace{3mm} For $p=2$, the $\epsilon\to 0$ asymptotics of bounded-energy critical points and negative gradient flows of these functionals had previously been studied in \cite{CL}, \cite{CS}, \cite{LW} as regularizations of harmonic maps and harmonic map heat flows. While in the $p=2$ setting one encounters the familiar bubbling phenomena that arise in the study of harmonic maps, for $p\notin\mathbb{N}$, Wang demonstrates that (much like Proposition \ref{strongcomp} for $p$-harmonic maps), bounded-energy sequences of critical points enjoy a strong compactness property:
\begin{thm}\label{wangthm}{$($\cite{Wang} \emph{Theorem A, Corollary B}$)$} If $p\in (1,n)\setminus\mathbb{N}$, and $\{u_{\epsilon_i}\}$ is a sequence of critical points for $E_{p,\epsilon_i}$ with $\epsilon_i\to 0$ and
\begin{equation}
\sup_i E_{p,\epsilon_i}(u_{\epsilon_i})<\infty,
\end{equation}
then a subsequence of $\{u_{\epsilon_i}\}$ converges strongly in $W^{1,p}(M,\mathbb{R}^L)$ to a stationary $p$-harmonic map $u\in W^{1,p}(M,N)$.
\end{thm}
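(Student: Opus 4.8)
The plan is to follow the regularity theory for stationary harmonic maps — monotonicity formula, $\epsilon$-regularity, covering argument — while exploiting the non-integrality of $p$ to rule out bubbling. First I would record the inner-variation identity for critical points $u_\epsilon$ of $E_{p,\epsilon}$ and, testing it against $X=\psi\,\tfrac12\nabla\dist(x,\cdot)^2$ and invoking the Hessian comparison theorem as in the proof of Lemma \ref{mono}, deduce an almost-monotonicity formula for the scaled energy
$$\Theta_{p,\epsilon}(u_\epsilon,x,r):=r^{p-n}\int_{B_r(x)}\big(|du_\epsilon|^p+\epsilon^{-p}F(u_\epsilon)\big),$$
in which (as in the $p=2$ theory of \cite{CL},\cite{CS},\cite{LW}) the potential term carries a favorable sign. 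The uniform energy bound then yields a weak $W^{1,p}(M,\mathbb{R}^L)$ limit $u$, and since $\int_M F(u_{\epsilon_i})\le C\epsilon_i^p\to 0$ forces $\dist(u,N)=0$ a.e., we obtain $u\in W^{1,p}(M,N)$.

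The technical heart is an $\epsilon$-regularity theorem: there should exist $\eta_0,r_0>0$ such that if $\epsilon<r<r_0$ and $\Theta_{p,\epsilon}(u_\epsilon,x,r)<\eta_0$, then $u_\epsilon$ satisfies uniform $C^{1,\beta}$ estimates on $B_{r/2}(x)$ and $\epsilon^{-p}\int_{B_{r/2}(x)}F(u_\epsilon)\to 0$ at a definite rate. I would prove this by a blow-up/compactness argument for the degenerate $p$-Laplace system, using the $C^{1,\alpha}$ theory of \cite{DiB},\cite{Lew} for $p$-harmonic maps and, crucially, the hypothesis $p\notin\mathbb{N}$, which — as in Proposition \ref{strongcomp} — precludes nontrivial bubbles carrying a quantum of conformally invariant energy. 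Setting
$$\Sigma:=\bigcap_{r>0}\{x\in M:\ \textstyle\liminf_{i\to\infty}\Theta_{p,\epsilon_i}(u_{\epsilon_i},x,r)\ge\eta_0\},$$
a Vitali covering argument exactly like that of Lemma \ref{singvollem}, together with monotonicity, gives $\mathcal{H}^{n-p}(\Sigma)<\infty$; away from $\Sigma$, the $\epsilon$-regularity estimates together with Rellich's theorem give strong $W^{1,p}_{loc}(M\setminus\Sigma)$ convergence $u_{\epsilon_i}\to u$, and passing to the limit in the inner-variation equation (the potential contributions being controlled off $\Sigma$) shows that $u$ is stationary $p$-harmonic on $M\setminus\Sigma$.

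It then remains to show that the defect measure $\nu:=\mu-|du|^p\,dvol_g$ vanishes, where $\mu$ is a weak limit of the energy measures $(|du_{\epsilon_i}|^p+\epsilon_i^{-p}F(u_{\epsilon_i}))dvol_g$, so that $\mathrm{spt}\,\nu\subset\Sigma$. Monotonicity gives the upper density bound $\mu(B_r(x))\le Cr^{n-p}$, hence $\nu\le C\,\mathcal{H}^{n-p}\lfloor\Sigma$, so by the density comparison theorem it suffices to show $\Theta^{n-p}(\nu,x_0)=0$ for each $x_0\in\Sigma$. The plan here is to blow up at $x_0$ along scales $\lambda_i\to0$ with $\epsilon_i/\lambda_i\to0$: the rescaled maps again satisfy the hypotheses of the theorem with parameter $\epsilon_i/\lambda_i$, so — inducting on the residual energy, each concentration costing at least $\eta_0$ so that the process terminates — they converge strongly in $W^{1,p}_{loc}(\mathbb{R}^n)$ to a $0$-homogeneous stationary $p$-harmonic tangent map $v_\infty$, which has locally finite $p$-energy precisely because $p<n$. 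Strong convergence of the rescalings means no energy is lost in the blow-up, and since the blow-up of $u$ at $x_0$ is also $v_\infty$, this forces $\nu(B_r(x_0))=o(r^{n-p})$, i.e. $\Theta^{n-p}(\nu,x_0)=0$. Hence $\nu\equiv 0$, the convergence $u_{\epsilon_i}\to u$ is strong in $W^{1,p}(M)$, and the removable-singularity theory for $p$-harmonic maps (together with $\mathcal{H}^{n-p}(\Sigma)<\infty$) promotes $u$ to a globally defined stationary $p$-harmonic map in $W^{1,p}(M,N)$.

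The hard part will be the $\epsilon$-regularity step: obtaining uniform interior estimates for the degenerate system $E_{p,\epsilon}$ requires simultaneous control of the map and the potential throughout the blow-up, and it is precisely there that $p\notin\mathbb{N}$ is indispensable — for integer $p$ the $p$-energy is conformally invariant in dimension $p$, bubbles form, and the strong-convergence conclusion genuinely fails.
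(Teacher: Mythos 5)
This statement is cited verbatim from \cite{Wang} (Theorem A and Corollary B); the paper under review does not prove it, so there is no internal argument with which to compare your proposal. What you have written is therefore a reconstruction of Wang's compactness theorem, and the high-level architecture is sound: the almost-monotonicity formula for the scaled perturbed energy, $\epsilon$-regularity for critical points of $E_{p,\epsilon}$ (this is Lemma 2.3 of \cite{Wang}), the Vitali covering bound on $\mathcal{H}^{n-p}$ of the concentration set $\Sigma$, strong $W^{1,p}_{loc}$ convergence off $\Sigma$, passing to the limit in the inner-variation identity, and removing $\Sigma$ using $\mathcal{H}^{n-p}(\Sigma)<\infty$ are all genuine ingredients of the proof, and the Lin-type scheme you follow (cf.\ \cite{Lin}, and \cite{NVV} Lemma 3.17 for the purely $p$-harmonic case) is the right one.

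The gap is in the step that rules out the defect measure $\nu$, which is precisely where the hypothesis $p\notin\mathbb{N}$ has to do concrete work. Your claim that the rescaled maps ``converge strongly in $W^{1,p}_{loc}(\mathbb{R}^n)$ to a $0$-homogeneous stationary $p$-harmonic tangent map'' by ``inducting on the residual energy, each concentration costing at least $\eta_0$ so that the process terminates'' is not a proof: the rescaled sequence carries the same normalized energy as the original one, so nothing decreases by a fixed quantum, and as stated the induction either begs the question (it is the theorem under proof, applied to itself) or does not terminate. The actual mechanism is the one you gesture at only in your final sentence, and it must be made precise: by Federer dimension reduction one passes to a $0$-homogeneous tangent defect measure $\nu_\infty\neq 0$ that is translation-invariant along a $k$-plane $L$ for some \emph{integer} $k$, supported on $L$ after maximal reduction, and $(p-n)$-homogeneous by the monotonicity formula, so that $\nu_\infty(B_r(0))\sim r^k$ while $\Theta^{n-p}(\nu_\infty,0)\in(0,\infty)$ forces $k=n-p$. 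Since $k\in\mathbb{Z}$ and $n-p\notin\mathbb{Z}$ this is a contradiction -- equivalently, no conformally invariant ``bubble'' carrying the energy quantum $\eta_0$ exists at any integer dimension when $p\notin\mathbb{N}$. Supplying this dimension-reduction and cone-classification argument is the technical core of \cite{Wang}'s Theorem A and of \cite{NVV} Lemma 3.17, and it is missing from your sketch. A second, more minor imprecision: your assertion that ``the blow-up of $u$ at $x_0$ is also $v_\infty$'' is not automatic from weak convergence $u_{\epsilon_i}\rightharpoonup u$; it requires a diagonal argument coupled with the strong convergence of the rescaled sequence that you are trying to establish.
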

As a consequence, for $p\in (1,n)\setminus\mathbb{N}$, the functionals $E_{p,\epsilon}$ are naturally suited to the construction of stationary $p$-harmonic maps via min-max methods, in light of the following elementary fact:

\begin{lem}\label{glps}
The generalized Ginzburg-Landau energy $E_{p,\epsilon}$ is a $C^1$ functional on $W^{1,p}(M,\mathbb{R}^L)$, with derivative
$$\langle E_{p,\epsilon}'(u),v\rangle=\int_M p\langle |du|^{p-2}du,dv\rangle+\epsilon^{-p}\langle DF(u),v\rangle,$$
and satisfies the following Palais-Smale condition:
if $u_j\in W^{1,p}(M,\mathbb{R}^L)$ is a sequence satisfying
\begin{equation}\label{psbdd}
\sup_j\|u_j\|_{W^{1,p}}\leq C<\infty
\end{equation}
and
\begin{equation}\label{pscs}
\lim_{j\to\infty}\|E_{p,\epsilon}'(u)\|_{(W^{1,p})^*}=0,
\end{equation}
then $\{u_j\}$ has a subsequence that converges strongly in $W^{1,p}$.
\end{lem}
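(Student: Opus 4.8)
The plan is to handle the two assertions separately, the $C^1$ statement being essentially bookkeeping and the Palais--Smale condition being the substantive part. Write $E_{p,\epsilon}=E_p+\epsilon^{-p}\mathcal{F}$ with $E_p(u)=\int_M|du|^p$ and $\mathcal{F}(u)=\int_M F(u)$. For $E_p$, the Gateaux derivative at $u$ in the direction $v$ is $p\int_M\langle|du|^{p-2}du,dv\rangle$, and the map $u\mapsto E_p'(u)$ is norm-continuous from $W^{1,p}$ into $(W^{1,p})^*$ because
$$\|E_p'(u_1)-E_p'(u_2)\|_{(W^{1,p})^*}\le p\,\big\||du_1|^{p-2}du_1-|du_2|^{p-2}du_2\big\|_{L^{p'}},$$
and the Nemytskii operator $\omega\mapsto|\omega|^{p-2}\omega$ is continuous from $L^p(M,T^*M\otimes\mathbb{R}^L)$ to $L^{p'}$ (a standard consequence of the dominated convergence theorem together with the ``every subsequence has a further convergent subsequence'' principle). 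For $\mathcal{F}$, observe that by construction $\lambda$ is smooth with $\lambda$ and $\lambda'$ bounded, so $F$ and $DF$ are globally bounded and Lipschitz on $\mathbb{R}^L$; hence $\mathcal{F}$ is Gateaux differentiable with $\langle\mathcal{F}'(u),v\rangle=\int_M\langle DF(u),v\rangle$, and since $u\mapsto DF(u)$ is continuous from $W^{1,p}$ (indeed from $L^p$) into $L^q(M,\mathbb{R}^L)$ for every $q<\infty$ — again by passing to a.e.-convergent subsequences and applying dominated convergence on the finite-measure space $M$ — the functional $\mathcal{F}$ is $C^1$. Adding the two contributions gives the claimed formula for $E_{p,\epsilon}'$.

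For the Palais--Smale condition, let $\{u_j\}$ satisfy (\ref{psbdd}) and (\ref{pscs}). Since $W^{1,p}$ is reflexive for $p\in(1,\infty)$, after passing to a subsequence $u_j\rightharpoonup u$ weakly in $W^{1,p}$, and by Rellich--Kondrachov $u_j\to u$ strongly in $L^p(M,\mathbb{R}^L)$ and a.e.\ on $M$; it remains only to upgrade this to $du_j\to du$ in $L^p$. The starting point is the identity
$$\langle E_{p,\epsilon}'(u_j)-E_{p,\epsilon}'(u),u_j-u\rangle=p\int_M\langle|du_j|^{p-2}du_j-|du|^{p-2}du,du_j-du\rangle+\epsilon^{-p}\int_M\langle DF(u_j)-DF(u),u_j-u\rangle.$$
The left-hand side tends to $0$: the term $\langle E_{p,\epsilon}'(u_j),u_j-u\rangle\to0$ because $\|E_{p,\epsilon}'(u_j)\|_{(W^{1,p})^*}\to0$ while $\|u_j-u\|_{W^{1,p}}$ stays bounded by (\ref{psbdd}), and $\langle E_{p,\epsilon}'(u),u_j-u\rangle\to0$ because $u_j-u\rightharpoonup0$ weakly and $E_{p,\epsilon}'(u)$ is a fixed element of $(W^{1,p})^*$. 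The potential integral on the right tends to $0$ because $|\langle DF(u_j)-DF(u),u_j-u\rangle|\le2\|DF\|_{L^\infty}|u_j-u|$ and $u_j\to u$ in $L^p$, hence in $L^1$, on the compact manifold $M$. Consequently
$$\int_M\langle|du_j|^{p-2}du_j-|du|^{p-2}du,du_j-du\rangle\longrightarrow 0.$$

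To finish I would invoke the classical fiberwise monotonicity inequalities for the map $\xi\mapsto|\xi|^{p-2}\xi$. If $p\ge2$ one has $\langle|a|^{p-2}a-|b|^{p-2}b,a-b\rangle\ge c_p|a-b|^p$, so $\|du_j-du\|_{L^p}\to0$ immediately. In the subquadratic range $1<p<2$ relevant to this paper one instead has $\langle|a|^{p-2}a-|b|^{p-2}b,a-b\rangle\ge c_p|a-b|^2(|a|+|b|)^{p-2}$ (interpreted as $0$ when $a=b=0$); applying this pointwise and then Hölder's inequality with exponents $\tfrac2p$ and $\tfrac2{2-p}$ gives
$$\int_M|du_j-du|^p\le\left(\int_M\frac{|du_j-du|^2}{(|du_j|+|du|)^{2-p}}\right)^{p/2}\left(\int_M(|du_j|+|du|)^p\right)^{(2-p)/2}.$$
The second factor is bounded using (\ref{psbdd}) together with weak lower semicontinuity of the $L^p$ norm, while the first factor tends to $0$ by the previous display, so again $du_j\to du$ in $L^p$. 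Combined with $u_j\to u$ in $L^p$, this yields $u_j\to u$ strongly in $W^{1,p}$, proving the Palais--Smale condition. The one genuinely delicate point is precisely this subquadratic case: convexity of $t\mapsto|t|^p$ degenerates, and one must convert the weighted estimate into a true $L^p$ estimate, which is exactly what the Hölder step above accomplishes; everything else is routine.
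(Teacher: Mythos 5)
Your proof is correct, but it takes a genuinely different route from the paper's. The paper also begins with weak $W^{1,p}$ and strong $L^p$ convergence of a subsequence, but then uses the Palais--Smale condition more lightly: testing $E_{p,\epsilon}'(u_j)$ against $u_j-u$ and applying H\"{o}lder to the integral $\int_M\langle|du_j|^{p-2}du_j,du\rangle$ yields
\[
\limsup_{j\to\infty}\bigl(\|du_j\|_{L^p}^p-\|du_j\|_{L^p}^{p-1}\|du\|_{L^p}\bigr)\leq 0,
\]
which forces $\|du_j\|_{L^p}\to\|du\|_{L^p}$, and strong convergence then follows from weak convergence plus norm convergence via the uniform convexity (Radon--Riesz property) of $L^p$. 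You instead test $E_{p,\epsilon}'(u_j)-E_{p,\epsilon}'(u)$ against $u_j-u$ and invoke the fiberwise monotonicity inequality $\langle|a|^{p-2}a-|b|^{p-2}b,a-b\rangle\geq c_p|a-b|^2(|a|+|b|)^{p-2}$ for $1<p<2$, converting to a genuine $L^p$ bound by H\"{o}lder. Both arguments are standard and valid. The paper's is leaner, requiring only H\"{o}lder plus the Radon--Riesz property, which it leaves implicit; yours is more explicit and self-contained (it delivers $\|du_j-du\|_{L^p}\to 0$ directly, with no appeal to uniform convexity), at the cost of bringing in the strong monotonicity estimate for the $p$-Laplacian vector field, and is the approach one would reach for if the functional were less symmetric. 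Your treatment of the $C^1$ claim via Nemytskii continuity of $\omega\mapsto|\omega|^{p-2}\omega$ is also sound; the paper simply declares this part ``trivial'' and gives no details.
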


\begin{proof} The first statement is trivial. The proof of the Palais-Smale condition is also quite standard, but we include it for completeness: 

For a sequence $\{u_j\}$ satisfying (\ref{psbdd}), we know from Rellich's theorem that a subsequence (which we continue to denote by $\{u_j\}$) converges weakly in $W^{1,p}$ and strongly in $L^p$ to a limiting function $u\in W^{1,p}.$ To confirm that the convergence is also strong in $W^{1,p}$, it is enough to show that
\begin{equation}\label{normlimclm}
\|du\|_{L^p}\geq\limsup_j\|du_j\|_{L^p}.
\end{equation}
And indeed, if the $\{u_j\}$ also satisfies (\ref{pscs}), then we see that
\begin{eqnarray*}
0&=&\lim_{j\to\infty}\|u_j-u\|_{W^{1,p}}\|E_{p,\epsilon}'(u_j)\|_{(W^{1,p})^*}\\
&\geq &\limsup_{j\to\infty}\langle E_{p,\epsilon}'(u_j),u_j-u\rangle\\
&=&\limsup_{j\to\infty} p\int_M(|du_j|^p-\langle |du_j|^{p-2}du_j,du\rangle)\\
&&+\limsup_{j\to\infty} \int_M\langle DF(u_j),u_j-u\rangle\\
\text{( since $u_j\to u$ strongly in $L^p$ )}&=&p\limsup_{j\to\infty} \int_M(|du_j|^p-\langle |du_j|^{p-2}du_j,du\rangle)\\
&\geq &\limsup_{j\to\infty}\|du_j\|_{L^p}^p-\|du_j\|_{L^p}^{p-1}\|du\|_{L^p},
\end{eqnarray*}
from which (\ref{normlimclm}) follows, completing the proof.
\end{proof}

\hspace{3mm} In the remainder of this section, we employ a simple min-max constructions for the energies $E_{p,\epsilon}$ for $N=S^1$ and $p\in (1,2)$, with arguments very similar to those of \cite{Ste1}, to prove Theorem \ref{existthm} of the introduction. Though we focus here on $p$-harmonic maps to $S^1$ with $p\in (1,2)$, we remark that the following construction (though not the specific energy bounds) can be generalized to produce nontrivial stationary $p$-harmonic maps from manifolds of dimension $n\geq k$ into arbitrary targets $N$ with $\pi_{k-1}(N)\neq \varnothing$, for $p\in (1,k)\setminus \mathbb{N}$. A detailed study of this general construction is beyond the scope of this paper, but may be an interesting direction for further investigation.

\subsection{The Saddle Point Construction and an Upper Bound for the Energies}\hfill

\hspace{3mm} We restrict ourselves now to the case where our target manifold $N$ is $S^1$, embedded in $\mathbb{R}^2$ as the boundary of the unit disk $D_1(0)$, and consider the collection $\Gamma_p(M)$ of two-parameter families $y\mapsto h_y\in W^{1,p}(M,\mathbb{R}^2)$ given by
\begin{equation}
\Gamma_p(M):=\{h\in C^0(D_1^2,W^{1,p}(M,\mathbb{R}^2))\mid h_y\equiv y\text{ for }y\in S^1\}.
\end{equation}
For $p\in (1,2)$ and $\epsilon>0$, we define the min-max energy levels $c_{p,\epsilon}$ by
\begin{equation}
c_{p,\epsilon}(M):=\inf_{h\in \Gamma_p(M)}\max_{y\in D_1^2}E_{p,\epsilon}(h_y),
\end{equation}
and the limiting energy levels
\begin{equation}
c_p(M):=\sup_{\epsilon>0}c_{p,\epsilon}(M)=\lim_{\epsilon\to 0}c_{p,\epsilon}(M).
\end{equation}

\hspace{3mm} Now, we've observed that $E_{p,\epsilon}$ is a $C^1$ functional on $W^{1,p}(M,\mathbb{R}^2)$, which evidently vanishes on the circle of constant maps to $S^1$. Thus, if we can show that $c_{p,\epsilon}(M)>0,$
then we can apply the Saddle Point Theorem of Rabinowitz (see, e.g., Chapter 3 of \cite{Gh}, Chapter 4 of \cite{Rab}) to conclude that for any minimizing sequence of families
$$h^j\in \Gamma_p(M),\text{\hspace{3mm} }\max_{y\in D_1^2}E_{p,\epsilon}(h^j_y)\to c_{p,\epsilon}(M),$$
there exists a sequence $v_j\in W^{1,p}(M,\mathbb{R}^2)$ for which
\begin{equation}\label{psseq}
\lim_{j\to\infty}E_{p,\epsilon}(v_j)=c_{p,\epsilon}(M),\text{ }\lim_{j\to\infty}\|E_{p,\epsilon}'(u)\|_{(W^{1,p})^*}=0,
\end{equation}
and
\begin{equation}\label{mmseqbdd}
\lim_{j\to\infty}dist_{W^{1,p}}(v_j,h^j(D_1^2))=0.
\end{equation} 
Moreover, since we can deform any minimizing sequence of families $h^j$ to one whose maps take values in the unit disk by applying a retraction, we can obtain in this way a sequence $v_j$ satisfying (\ref{psseq}) and a uniform bound
$$\sup_j\|v_j\|_{W^{1,p}}<\infty.$$
In particular, it will then follow from Lemma \ref{glps} that there is indeed a critical point $u_{p,\epsilon}$ of $E_{p,\epsilon}$ with energy
$$E_{p,\epsilon}(u_{p,\epsilon})=c_{p,\epsilon}(M).$$

\hspace{3mm} To check that $c_{p,\epsilon}(M)>0$, we follow the same standard arguments as in \cite{Ste1}. Namely, for any $h\in \Gamma_p(M)$, we note that the averaging map
$$D_1^2\ni y \mapsto \frac{1}{Vol(M)}\int_Mh_y \in \mathbb{R}^2$$
defines a continuous map from $D_1^2\to \mathbb{R}^2$ which restricts to the identity on $S^1$, so that by elementary degree theory, there must be some $y_0\in D_1^2$ for which $\int_Mh_{y_0}=0$. Now, the $L^p$ Poincar\'{e} inequality gives us a constant $C_p(M)$ such that
$$\int_M |v|^p\leq C_p(M)\int_M|dv|^p$$
whenever $\int_Mv=0$, so by the preceding observation, for any $h\in \Gamma_p(M)$, there is some $y_0\in D_1^2$ for which $v=h_{y_0}$ satisfies
\begin{eqnarray*}
E_{p,\epsilon}(v)&\geq &C_p(M)^{-1}\int_M|v|^p+\int_M\epsilon^{-p}F(v)\\
&=&C_p(M)^{-1}\int_{\{|v|\geq \frac{1}{2}\}}|v|^p+\int_{\{|v|\leq \frac{1}{2}\}}\epsilon^{-p}F(v)\\
&\geq &C_p(M)^{-1}2^{-p}Vol(\{|v|\geq \frac{1}{2}\})+\epsilon^{-p}Vol(\{|v|< \frac{1}{2}\})\delta_0,
\end{eqnarray*}
where we've set $\delta_0:=\min\{F(y)\mid |y|\leq \frac{1}{2}\}>0$. In particular, since
$$Vol(\{|v|\geq \frac{1}{2}\})+Vol(\{|v|<\frac{1}{2})=Vol(M),$$
it follows that, for $\epsilon<1$,
$$\max_{y\in D_1^2}E_{p,\epsilon}(h_y)\geq E_{p,\epsilon}(h_{y_0})\geq \delta(p,M)>0$$
for any family $h\in \Gamma_p(M)$. Taking the infimum over $h\in \Gamma_p(M)$, we confirm that
$$c_{p,\epsilon}(M)\geq \delta(p,M)>0.$$

\hspace{3mm} In summary, we've so far established that
\begin{lem}\label{exist0} For $p\in (1,2)$ and $\epsilon\in (0,1)$, there exists a critical point $u_{p,\epsilon}\in W^{1,p}(M,\mathbb{R}^2)$ of $E_{p,\epsilon}$ satisfying
$$E_{p,\epsilon}(u_{p,\epsilon})=c_{p,\epsilon}(M)\geq \delta(p,M)>0.$$
\end{lem}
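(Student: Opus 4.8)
The plan is to produce $u_{p,\epsilon}$ as a saddle-point critical point at the min-max level $c_{p,\epsilon}(M)$, assembling the pieces already in place. There are three ingredients: (i) $E_{p,\epsilon}$ is a $C^1$ functional on $W^{1,p}(M,\mathbb{R}^2)$ satisfying the Palais--Smale condition along bounded sequences, which is exactly Lemma \ref{glps}; (ii) the min-max value $c_{p,\epsilon}(M)$ is bounded below by a constant $\delta(p,M)>0$ independent of $\epsilon\in(0,1)$; and (iii) the abstract Saddle Point Theorem of Rabinowitz, which converts a minimizing sequence of admissible families $h^j\in\Gamma_p(M)$ into a Palais--Smale sequence at level $c_{p,\epsilon}(M)$ lying asymptotically near the images $h^j(D_1^2)$.

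First I would establish (ii). Given $h\in\Gamma_p(M)$, the averaged map $A(y):=\mathrm{Vol}(M)^{-1}\int_M h_y$ is continuous from $D_1^2$ into $\mathbb{R}^2$ and restricts to the identity on $\partial D_1^2=S^1$; by elementary degree theory its image contains $0$, so there is $y_0\in D_1^2$ with $\int_M h_{y_0}=0$. Applying the $L^p$ Poincar\'e inequality to $v:=h_{y_0}$ and splitting $M$ into $\{|v|\ge\tfrac12\}$ and $\{|v|<\tfrac12\}$ --- on the latter of which $F(v)\ge\delta_0:=\min_{|y|\le 1/2}F(y)>0$ --- gives $E_{p,\epsilon}(v)\ge\delta(p,M)>0$ for $\epsilon<1$, with $\delta(p,M)$ independent of $h$. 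Taking the infimum over $\Gamma_p(M)$ yields $c_{p,\epsilon}(M)\ge\delta(p,M)>0$.

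Next I would run the minimax machinery. Since $E_{p,\epsilon}$ vanishes on the circle of constant $S^1$-valued maps, which are precisely the boundary values enforced by $\Gamma_p(M)$, and since $c_{p,\epsilon}(M)>0$ by the previous step, the Saddle Point Theorem applies: for any minimizing sequence $h^j\in\Gamma_p(M)$ with $\max_{y}E_{p,\epsilon}(h^j_y)\to c_{p,\epsilon}(M)$ one obtains $v_j\in W^{1,p}(M,\mathbb{R}^2)$ with $E_{p,\epsilon}(v_j)\to c_{p,\epsilon}(M)$, $\|E_{p,\epsilon}'(v_j)\|_{(W^{1,p})^*}\to 0$, and $\mathrm{dist}_{W^{1,p}}(v_j,h^j(D_1^2))\to 0$. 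To feed this into Lemma \ref{glps} I need the $v_j$ uniformly bounded in $W^{1,p}$; I would arrange this by first replacing each $h^j$ with its composition with the nearest-point retraction of $\mathbb{R}^2$ onto the closed unit disk $\bar D_1$, which fixes $S^1$ and does not increase either $\int_M|du|^p$ or $\int_M F(u)$, so that $h^j(D_1^2)$ stays bounded in $W^{1,p}$ and hence, by (\ref{mmseqbdd}), so does $\{v_j\}$. Lemma \ref{glps} then extracts a subsequence converging strongly in $W^{1,p}$ to a limit $u_{p,\epsilon}$, necessarily a critical point of $E_{p,\epsilon}$ with $E_{p,\epsilon}(u_{p,\epsilon})=c_{p,\epsilon}(M)\ge\delta(p,M)>0$.

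The only real point of care is the bookkeeping around this uniform $W^{1,p}$-bound: one must check that the retraction step is compatible with the minimax class $\Gamma_p(M)$ (it is, since the retraction is Lipschitz, fixes $S^1$, and is distance-nonincreasing into $\bar D_1$, hence leaves $h^j$ admissible while not raising the energy), so that one genuinely ends up with a bounded Palais--Smale sequence. Beyond that, the argument is a direct application of Rabinowitz's theorem together with the properties of $E_{p,\epsilon}$ recorded in Lemma \ref{glps}.
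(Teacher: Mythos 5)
Your proof is correct and follows essentially the same route as the paper: the same degree-theoretic argument via the averaging map to locate a $y_0$ with $\int_M h_{y_0}=0$, the same Poincar\'e-plus-splitting bound giving $c_{p,\epsilon}(M)\geq\delta(p,M)>0$, the same application of Rabinowitz's Saddle Point Theorem, the same retraction onto $\bar D_1$ to secure a bounded Palais--Smale sequence, and the same invocation of Lemma \ref{glps} to pass to a critical point. The only point you spell out slightly more explicitly than the paper is that the retraction preserves admissibility in $\Gamma_p(M)$ and is energy-nonincreasing for both the gradient and potential terms, which is a worthwhile clarification but not a change of approach.
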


\hspace{3mm} Next, we will establish an upper bound for the limiting energy levels $c_p(M)=sup_{\epsilon>0}c_{p,\epsilon}(M)$: namely, we show that

\begin{claim}\label{cpupper} There exists $C(M)<\infty$ independent of $p$ such that
\begin{equation}
c_p(M)\leq \frac{C(M)}{2-p}.
\end{equation}
\end{claim}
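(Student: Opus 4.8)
The plan is to bound $c_{p,\epsilon}(M)$ by the maximal $E_{p,\epsilon}$-energy along an explicitly constructed competitor family $h^\epsilon\in\Gamma_p(M)$, chosen so that the bound is uniform in $\epsilon\in(0,1)$; since $c_p(M)=\sup_{\epsilon>0}c_{p,\epsilon}(M)$, this suffices. The family is modeled on the sweepouts of \cite{Ste1}, the one new feature being that here the role of $|\log\epsilon|$ is played by $\tfrac1{2-p}$, which is exactly the scaling of the $p$-energy of a codimension-two ``vortex'' when $p<2$.

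First I would fix a comparison map. Choose a small coordinate ball $B=B_{2r_0}(x_0)\subset M$, identified with a Euclidean ball in $\mathbb{R}^2_z\times\mathbb{R}^{n-2}_w$, and let $\Sigma\subset B$ be a fixed smooth $(n-1)$-disk (an arc, when $n=2$) with $\partial\Sigma=\{z=0\}\cap B$ a standard smooth $(n-2)$-sphere. Let $\psi\colon M\to S^1$ be the standard $W^{1,p}$ map with $T(\psi)=2\pi[\partial\Sigma]$: near $\partial\Sigma$ it equals the round vortex $z/|z|$ in the normal $2$-planes, it has a fixed-width transition layer along $\Sigma$, and $d\psi\equiv 0$ off a tubular neighborhood of $\Sigma$. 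Splitting $\int_M|d\psi|^p$ into the part near $\partial\Sigma$, which is comparable to $\mathcal H^{n-2}(\partial\Sigma)\int_{B_1^2}|z|^{-p}\,dz=\mathcal H^{n-2}(\partial\Sigma)\cdot\frac{2\pi}{2-p}$, and the part in the transition layer, which is $O(1)\le\tfrac1{2-p}$, gives
\begin{equation*}
\int_M|d\psi|^p\le\frac{C_0(M)}{2-p},
\end{equation*}
with $C_0(M)$ independent of $p$; this is the step where $p<2$ is essential.

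Next I would build the sweepout $h^\epsilon\colon\overline{D_1^2}\to W^{1,p}(M,\mathbb{R}^2)$, continuous, with $h^\epsilon_y\equiv y$ for $y\in S^1$ and $h^\epsilon_0=\psi$. Over most of the disk the maps can be taken $S^1$-valued: for $|y|$ small, $h^\epsilon_y$ is a translated/rescaled copy of $\psi$ composed with a rotation of the target, with $p$-energy $\le C_0(M)/(2-p)$ by the estimate above together with the scale-invariance of the $p$-energy; the winding in $\theta$ forced by the boundary values $h^\epsilon_{e^{i\theta}}\equiv e^{i\theta}$ is carried by the topology of the configuration (the position and orientation of $\partial\Sigma$ and its returning sheet), not by a non-integer scalar phase. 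Since the constant loop $\theta\mapsto \underline{e^{i\theta}}$ cannot bound a disk of genuinely $S^1$-valued maps (its winding at any point equals $1$), the family must leave the $S^1$-valued maps somewhere; I would arrange this to happen only near one parameter value, where $h^\epsilon_y$ is the standard Ginzburg--Landau degree-one profile at scale $\epsilon$. On such a map the potential term is supported in a ball of radius $O(\epsilon)$, contributing $\epsilon^{-p}\cdot O(1)\cdot O(\epsilon^n)=O(\epsilon^{\,n-p})\le 1$, while the gradient term is again $O\!\big(\tfrac1{2-p}\big)$; on every other map in the family the potential vanishes and $E_{p,\epsilon}=E_p$. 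Using $\tfrac1{2-p}>1$ to absorb the $O(1)$ terms, we get $\max_{y\in D_1^2}E_{p,\epsilon}(h^\epsilon_y)\le C(M)/(2-p)$ uniformly in $\epsilon\in(0,1)$, hence $c_{p,\epsilon}(M)\le C(M)/(2-p)$ for all such $\epsilon$, and the claim follows on taking the supremum over $\epsilon$.

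The analytic estimates here (the scaling of the $p$-energy of the model maps, the $\epsilon^{\,n-p}$ bound on the potential) are immediate. The real work, exactly as in \cite{Ste1}, is producing the explicit sweepout: checking that it is continuous into $W^{1,p}(M,\mathbb{R}^2)$ — where $p<2$ is what makes the singular profiles vary continuously — that it restricts to the prescribed constant maps on $\partial D_1^2$, and that the genuinely non-$S^1$-valued portion stays confined to a ball of radius $\sim\epsilon$ near a single parameter value. This homotopy-theoretic bookkeeping, rather than any estimate, is the main obstacle.
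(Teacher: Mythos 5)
Your first paragraph is sound: the $p$-energy of a codimension-two vortex $\psi$ scales like $\frac{2\pi}{2-p}$, and this is the source of the $\frac{C(M)}{2-p}$ bound. But there is a genuine error in your second paragraph: the assertion that ``the constant loop $\theta\mapsto e^{i\theta}$ cannot bound a disk of genuinely $S^1$-valued maps (its winding at any point equals $1$)'' is false in $W^{1,p}(M,S^1)$ when $p<2$, and the whole point of the construction is to exploit this. The pointwise-winding obstruction you invoke presupposes that $y\mapsto h_y(x_0)\in S^1$ is a continuous circle-valued loop, which fails for Sobolev maps with singular sets that move: for a fixed $x_0$ one can arrange that exactly one parameter $y(x_0)\in D_1^2$ places the vortex singularity at $x_0$, so the apparent degree-one loop is ``cut'' there. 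Concretely, the paper takes
$$v_y(z)=\frac{z+(1-|y|)^{-1}y}{|z+(1-|y|)^{-1}y|},\qquad h_y=v_y\circ f,$$
where $f$ is a Lipschitz projection of a triangulation of $M$ to $\mathbb{R}^2$. Every $h_y$ is $S^1$-valued, $h_y\equiv y$ on $S^1$ because the translation point $(1-|y|)^{-1}y$ runs off to infinity as $|y|\to 1$, and $y\mapsto h_y$ is continuous into $W^{1,p}$ precisely because $p<2$ tolerates the moving codimension-two singularity $\{f(x)=-(1-|y|)^{-1}y\}$.

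Because the competitor family is entirely $S^1$-valued, the potential term $\epsilon^{-p}F$ vanishes identically on it, the family is independent of $\epsilon$, and the bound $\max_y E_{p,\epsilon}(h_y)\le C(M)/(2-p)$ is a single direct computation rather than a case analysis isolating one non-$S^1$-valued parameter with an $O(\epsilon^{n-p})$ potential estimate. Your $\epsilon$-dependent sweepout with a small Ginzburg--Landau core near one parameter value would likely also yield the bound if carried out carefully, but it is unnecessary, and the justification you offer for why it is needed is incorrect. I would replace that reasoning with the observation that in $W^{1,p}$ with $p<2$ the space of $S^1$-valued maps is already large enough (it contains singular vortices) to fill in the disk, and use the explicit family above.
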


The finiteness of $c_p(M)$ will then allow us to apply Theorem \ref{wangthm} to deduce the existence of a corresponding stationary $p$-harmonic map, while the boundedness of $(2-p)c_p(M)$ will provide the upper bound in Theorem \ref{existthm}.

\begin{proof} Again, the proof is very close to that of the analogous statement in Section 4 of \cite{Ste1}, albeit somewhat simpler, since in this case we can produce a single family $h$ lying in $\Gamma_p(M)$ for every $p\in (1,2)$ and satisfying a bound
\begin{equation}\label{wantfambd}
\max_{y\in D_1^2}E_{p,\epsilon}(h_y)\leq \frac{C(M)}{2-p}
\end{equation}
of the desired form.

\hspace{3mm} For $y\in D_1^2\setminus S^1$, define $v_y\in \bigcap_{p\in [1,2)}W^{1,p}(\mathbb{R}^2, S^1)$ by
\begin{equation}
v_y(z)=\frac{z+(1-|y|)^{-1}y}{|z+(1-|y|)^{-1}y|},
\end{equation}
and set $v_y\equiv y$ for $y\in S^1$. Fix also a triangulation of $M^n$--that is, choose a bi-Lipschitz map $\Phi: M\to |\mathcal{K}|$ from $M$ to the underlying space of a simplicial complex $\mathcal{K}$ in some Euclidean space $\mathbb{R}^L$. Applying a generic rotation, we can arrange that the projection map $P$ from $\mathbb{R}^L$ to the plane $\mathbb{R}^2\times 0$ has full rank on the $n$-dimensional subspace parallel to each $n$-dimensional simplex $\Delta\in \mathcal{K}$. Denoting by $f\in Lip(M,\mathbb{R}^2)$ the composition 
$$f(x)=(\Phi^1(x),\Phi^2(x))$$
of the projection $P:\mathbb{R}^L\to \mathbb{R}^2\times 0$ with $\Phi: M\to \mathbb{R}^L$, we define the family
\begin{equation}\label{hfamdef}
h_y:=v_y\circ f.
\end{equation}

\hspace{3mm} Our task now is to show that $y\mapsto h_y$ defines a continuous map $D_1^2\to W^{1,p}(M,\mathbb{R}^2)$, satisfying (\ref{wantfambd}). First, since $\Phi$ is bi-Lipschitz and $\mathcal{K}$ is finite, we observe that it is enough to establish this for the family
$$F_y:=v_y\circ P|_{\Delta}$$
on each $n$-dimensional face $\Delta \in\mathcal{K}$. And since we've also chosen $\mathcal{K}$ such that the restriction $P_{\Delta}$ of the projection map to $\Delta$ has full rank, we can write
$$P|_{\Delta}=P_0\circ L,$$
where $L:\Delta\to \Delta'\subset \mathbb{R}^n$ is an invertible affine-linear map, and $P_0: \mathbb{R}^n\to \mathbb{R}^2$ is simply the projection
$$P_0(x^1,\ldots,x^n)=(x^1,x^2)$$
onto the first two coordinates. In particular, it is enough to show that on a bounded domain $\Omega \subset \mathbb{R}^n$, the family
$$y\mapsto F_y(x)=v_y(x^1,x^2)$$
is continuous in $W^{1,p}$ for each $p\in (1,2)$, and satisfies
\begin{equation}\label{wantomegabd}
\max_{y\in D_1^2}E_{p,\epsilon}(F_y)\leq \frac{C_{\Omega}}{2-p}.
\end{equation}

\hspace{3mm} This is straightforward. By direct computation, the energy $E_{p,\epsilon}$ of $F_y$ on $\Omega$ satisfies
\begin{eqnarray*}
E_{p,\epsilon}(F_y)&=&\int_{(z,x')\in (\mathbb{R}^2\times \mathbb{R}^{n-2})\cap\Omega}|z+(1-|y|)^{-1}y|^{-p}dzdx'\\
&\leq &\int_{x'\in P_0(\Omega)}\frac{2\pi}{2-p}diam(\Omega)^{2-p}dx'\\
&\leq &\frac{C_ndiam(\Omega)^{n-p}}{2-p}.
\end{eqnarray*}
Thus, (\ref{wantomegabd}) holds, and we see moreover that the energy $y\mapsto E_{p,\epsilon}(F_y)$ varies continuously in $y$. Since the family $y\mapsto F_y$ is obviously weakly continuous in $W^{1,p}(\Omega,S^1)$, it follows that $y\mapsto F_y$ is strongly continuous as well. 

\hspace{3mm} We conclude finally that the families $y\mapsto h_y$ defined by (\ref{hfamdef}) indeed belong to $\Gamma_p(M)$, and satisfy
$$\max_{y\in D_1^2}E_{p,\epsilon}(h_y)\leq \frac{C(M)}{2-p},$$
where the constant $C(M)$ is determined by our choice of triangulation $\Phi:M\to |\mathcal{K}|$. In particular, it follows that
$$c_{p,\epsilon}(M)\leq \frac{C(M)}{2-p}$$
for every $\epsilon>0$, and taking the supremum over $\epsilon>0$, we therefore have 
\begin{equation}
c_p(M)\leq \frac{C(M)}{2-p},
\end{equation}
as desired.
\end{proof}

\hspace{3mm} Since $c_p(M)<\infty$, we can apply Theorem \ref{wangthm} to the min-max critical points $u_{p,\epsilon}$ of Lemma \ref{exist0}, to conclude that

\begin{prop}\label{exist1}
On every compact Riemannian manifold $M^n$ of dimension $n\geq 2$, there exists for every $p\in (1,2)$ a stationary $p$-harmonic map $u_p\in W^{1,p}(M,S^1)$ to $S^1$ of energy
$$0<E_p(u)=c_p(M)\leq \frac{C(M)}{2-p}.$$
\end{prop}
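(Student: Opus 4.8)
The plan is to assemble the three facts already in place in this section: the min-max critical points $u_{p,\epsilon}$ of $E_{p,\epsilon}$ from Lemma \ref{exist0}, the uniform upper bound $c_p(M)\le C(M)/(2-p)$ of Claim \ref{cpupper}, and Wang's strong compactness result, Theorem \ref{wangthm}. Note first that for $p\in(1,2)$ we automatically have $p\notin\mathbb{N}$, and since $n\ge 2$ we have $p<2\le n$; thus $p\in(1,n)\setminus\mathbb{N}$, so Theorem \ref{wangthm} applies to any bounded-energy sequence of critical points of $E_{p,\epsilon_i}$ with $\epsilon_i\to0$.

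Fix $p\in(1,2)$ and choose any sequence $\epsilon_i\downarrow0$. By Lemma \ref{exist0}, there are critical points $u_{p,\epsilon_i}\in W^{1,p}(M,\mathbb{R}^2)$ of $E_{p,\epsilon_i}$ with $E_{p,\epsilon_i}(u_{p,\epsilon_i})=c_{p,\epsilon_i}(M)$, and by definition of $c_p(M)$ together with Claim \ref{cpupper} these energies are uniformly bounded, $\sup_i E_{p,\epsilon_i}(u_{p,\epsilon_i})\le c_p(M)\le C(M)/(2-p)<\infty$. Applying Theorem \ref{wangthm}, after passing to a subsequence we obtain a stationary $p$-harmonic map $u_p\in W^{1,p}(M,S^1)$ with $u_{p,\epsilon_i}\to u_p$ strongly in $W^{1,p}(M,\mathbb{R}^2)$.

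To identify the energy of $u_p$, I would use the strong convergence to get $\|du_{p,\epsilon_i}\|_{L^p}^p\to E_p(u_p)$; since the potential term $\epsilon_i^{-p}\int_M F(u_{p,\epsilon_i})$ is nonnegative, this already gives $E_p(u_p)\le\lim_i E_{p,\epsilon_i}(u_{p,\epsilon_i})=\lim_i c_{p,\epsilon_i}(M)=c_p(M)$. For the reverse inequality I would invoke the vanishing of the potential energies, $\epsilon_i^{-p}\int_M F(u_{p,\epsilon_i})\to0$, which is part of the analysis in \cite{Wang} underlying Theorem \ref{wangthm} (and can alternatively be extracted from a monotonicity/Pohozaev-type argument for critical points of $E_{p,\epsilon_i}$); combined with the previous line this yields $E_p(u_p)=c_p(M)$. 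Positivity is then immediate, since Lemma \ref{exist0} gives $c_p(M)\ge c_{p,\epsilon_i}(M)\ge\delta(p,M)>0$, and the upper bound $E_p(u_p)=c_p(M)\le C(M)/(2-p)$ is exactly Claim \ref{cpupper}.

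I do not expect a serious obstacle here: the argument is essentially a synthesis of Lemma \ref{exist0}, Claim \ref{cpupper}, and Theorem \ref{wangthm}, once one checks that the min-max energies are finite (so that Wang's compactness applies) and that they converge to $c_p(M)$. The only delicate point is the equality $E_p(u_p)=c_p(M)$ — as opposed to the cheap inequality $E_p(u_p)\le c_p(M)$ — which rests on the potential part of the regularized energy disappearing in the limit along the subsequence.
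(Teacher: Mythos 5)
Your argument is correct and follows exactly the paper's route: the paper's proof of this proposition is a single sentence that applies Theorem~\ref{wangthm} to the min-max critical points $u_{p,\epsilon}$ from Lemma~\ref{exist0}, using the finiteness of $c_p(M)$ from Claim~\ref{cpupper}. Your more careful discussion of the energy identity $E_p(u_p)=c_p(M)$ — via strong $W^{1,p}$ convergence for the $p$-energy piece and the vanishing of the rescaled potential $\epsilon_i^{-p}\int_M F(u_{p,\epsilon_i})$, which is indeed part of Wang's analysis — is a sensible elaboration of a step the paper treats as implicit.
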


\subsection{Lower Bounds for $c_p(M)$}\hfill

\hspace{3mm} To complete the proof of Theorem \ref{existthm}, it remains to show that the min-max energies $c_p(M)$ satisfy a lower bound of the form
\begin{equation}\label{wantlowbd}
c_p(M)\geq \frac{c(M)}{2-p}.
\end{equation}
To achieve this, we argue as in Section 3 of \cite{Ste1}, with Proposition \ref{sharpdensbd} taking on the role played by the $\eta$-ellipticity theorem in the Ginzburg-Landau setting. 

\hspace{3mm} We begin by observing that (\ref{wantlowbd}) holds for the round sphere. As discussed in the proof of Proposition \ref{sharpdensbd}, since $b_1(S^n)=0$, every nontrivial weakly $p$-harmonic map $u\in W^{1,p}(S^n,S^1)$ must have singularities. In particular, the stationary $p$-harmonic maps $u_p$ of energy $c_p(M)>0$ constructed above must have nontrivial singular set, and from Proposition \ref{sharpdensbd} and monotonicity, it indeed follows that
\begin{equation}\label{spherelbd}
\liminf_{p\to 2}(2-p)c_p(S^n)=\liminf_{p\to 2}(2-p)E_p(u_p)>0.
\end{equation}
The estimate for arbitrary $M^n$ is then an easy consequence of the following claim:
\begin{claim} There is a constant $C(M^n)<\infty$ such that
\begin{equation}\label{spherecomp}
c_{p,\epsilon}(S^n)\leq C(M)c_{p,\epsilon}(M)
\end{equation}
for every $p\in (1,2)$ and $\epsilon>0$. 
\end{claim}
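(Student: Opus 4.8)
The plan is to push forward sweepouts of $M$ to sweepouts of $S^n$ through a fixed bi-Lipschitz model, exploiting the fact that the round sphere is the \emph{double} of a flat $n$-ball. Write $S^n=H_+\cup_\Sigma H_-$ for the two closed hemispheres, glued along the equator $\Sigma\cong S^{n-1}$. There are bi-Lipschitz homeomorphisms $\Psi_0^\pm\colon H_\pm\to \overline{B_1^n(0)}\subset\mathbb{R}^n$, with a universal bi-Lipschitz constant, which restrict on $\Sigma$ to one and the same bi-Lipschitz map $\Sigma\to\partial B_1^n$: for instance, in geodesic polar coordinates about a pole, send the point at distance $t\in[0,\tfrac\pi2]$ in direction $\omega\in S^{n-1}$ to $\tfrac{2t}{\pi}\omega$, which is bi-Lipschitz since $\tfrac{2t/\pi}{\sin t}$ is bounded above and below on $[0,\tfrac\pi2]$.

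First I would fix a point $x_0\in M$ and a radius $\rho>0$ small enough (depending only on $M$) that $\exp_{x_0}\colon \overline{B_\rho^n(0)}\to\overline{B_\rho(x_0)}\subset M$ is bi-Lipschitz, and set $\Psi_\pm(z):=\exp_{x_0}\!\bigl(\rho\,\Psi_0^\pm(z)\bigr)$, so that $\Psi_+$ and $\Psi_-$ agree on $\Sigma$ and have a common bi-Lipschitz constant $L=L(M)\ge 1$ independent of $p$ and $\epsilon$. Given any family $h\in\Gamma_p(M)$, define $\tilde h_y\colon S^n\to\mathbb{R}^2$ by $\tilde h_y:=h_y\circ\Psi_+$ on $H_+$ and $\tilde h_y:=h_y\circ\Psi_-$ on $H_-$. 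Since the two pieces have matching traces on $\Sigma$, $\tilde h_y\in W^{1,p}(S^n,\mathbb{R}^2)$; since restriction $W^{1,p}(M)\to W^{1,p}(B_\rho(x_0))$ and precomposition with the fixed bi-Lipschitz maps $\Psi_\pm$ are bounded linear operators, $y\mapsto\tilde h_y$ is continuous into $W^{1,p}(S^n,\mathbb{R}^2)$; and for $y\in S^1$ one has $h_y\equiv y$ on $M$, hence $\tilde h_y\equiv y$ on $S^n$. Thus $\tilde h\in\Gamma_p(S^n)$.

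The energy comparison is then a chain-rule and change-of-variables computation. On each hemisphere $|d\tilde h_y|\le L\,|dh_y|\circ\Psi_\pm$ pointwise, and $|\det d\Psi_\pm^{-1}|\le L^n$, so
\[
E_{p,\epsilon}(\tilde h_y)\le 2L^{n+p}\int_{B_\rho(x_0)}\bigl(|dh_y|^p+\epsilon^{-p}F(h_y)\bigr)\le 2L^{n+2}\,E_{p,\epsilon}(h_y),
\]
where in the last step I used $F\ge 0$ to discard the integral over $M\setminus B_\rho(x_0)$ and $p<2$. Hence $\max_{y\in D_1^2}E_{p,\epsilon}(\tilde h_y)\le C(M)\max_{y\in D_1^2}E_{p,\epsilon}(h_y)$ with $C(M):=2L(M)^{n+2}$, and taking the infimum over $h\in\Gamma_p(M)$ gives $c_{p,\epsilon}(S^n)\le C(M)\,c_{p,\epsilon}(M)$, as claimed.

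Since every step is elementary, there is no real obstacle here; the only points needing attention are arranging $\Psi_+|_\Sigma=\Psi_-|_\Sigma$ so that the glued family genuinely lies in $W^{1,p}(S^n)$ and is admissible in $\Gamma_p(S^n)$, and checking that $C(M)$ — built only from the bi-Lipschitz constant $L(M)$ of the model map $H_\pm\to \overline{B_\rho(x_0)}$ — is indeed independent of $p$ and $\epsilon$. Together with (\ref{spherelbd}) this yields $c_p(M)=\sup_{\epsilon>0}c_{p,\epsilon}(M)\ge C(M)^{-1}c_p(S^n)$, hence $\liminf_{p\to2}(2-p)c_p(M)>0$, which combined with Claim \ref{cpupper} completes the proof of Theorem \ref{existthm}.
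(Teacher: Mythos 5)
Your proposal is correct and is essentially the same argument as the paper's: both exploit the doubling structure of $S^n$ (reflection across the equator) together with a fixed bi-Lipschitz/diffeomorphic identification of one hemisphere with a small ball in $M$, producing a bounded linear push-forward of sweepouts that multiplies the $E_{p,\epsilon}$-energy by a constant depending only on $M$. The paper packages this as a composition $\Phi = R\circ P_f$ of the even reflection $R\colon W^{1,p}(S^n_+)\to W^{1,p}(S^n)$ with the pullback $P_f$ along a smooth embedding $f\colon S^n_+\hookrightarrow M$, whereas you build the two hemisphere maps $\Psi_\pm$ explicitly and glue; the content and the resulting constant are the same.
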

\begin{proof} We will construct a bounded linear map $\Phi: W^{1,p}(M,\mathbb{R}^2)\to W^{1,p}(S^n,\mathbb{R}^2)$ that fixes the constant maps and satisfies
$$E_{p,\epsilon}(\Phi(u))\leq C(M)E_{p,\epsilon}(u)$$
for all $u\in W^{1,p}(M,\mathbb{R}^2)$ and $p\in (1,2)$. For any family $h\in \Gamma_p(M)$, we then see that $\Phi\circ h$ defines a family in $\Gamma_p(S^n)$, so that
$$c_{p,\epsilon}(S^n)\leq \max_yE_{p,\epsilon}(\Phi(h_y))\leq C(M)\max_y E_{p,\epsilon}(h_y),$$
and taking the infimum over $h\in \Gamma_p(M)$ gives (\ref{spherecomp}).

\hspace{3mm} We construct this map $\Phi$ as follows. First, denote by $S^n_+$ the northern hemisphere $S^n_+=\{(x^1,\ldots,x^{n+1})\in S^n\mid x^{n+1}\geq 0\} $, and consider the reflection map
$$R: W^{1,p}(S^n_+,\mathbb{R}^2)\to W^{1,p}(S^n,\mathbb{R}^2)$$
given by
$$(Ru)(x^1,\ldots,x^{n+1})=u(x^1,\ldots,x^n,|x^{n+1}|).$$
$R$ is clearly a bounded linear map which fixes the constants, and has the effect of doubling $E_{p,\epsilon}$--i.e.,
$$E_{p,\epsilon}(Ru,S^n)=2E_{p,\epsilon}(u,S^n_+)$$
for every $u\in W^{1,p}(S^n_+)$.

\hspace{3mm} Next, since $S^n_+$ is a topological ball, we can choose some smooth $f:S^n_+\hookrightarrow M$ which is a diffeomorphism onto its image. Fixing such an $f$, we see that the pullback map
$$P_f:W^{1,p}(M,\mathbb{R}^2)\to W^{1,p}(S^n_+,\mathbb{R}^2)$$
given by
$$(P_fu)=u\circ f$$
is another bounded linear map that fixes the constant maps, and satisifies
$$E_{p,\epsilon}(P_fu,S^n_+)\leq C(M)E_{p,\epsilon}(u,M).$$
In particular, taking $\Phi:=R\circ P_f$ gives a map $\Phi: W^{1,p}(M,\mathbb{R}^2)\to W^{1,p}(S^n,\mathbb{R}^2)$ satisfying the desired properties, confirming the claim.
\end{proof}

\hspace{3mm} Finally, taking the supremum over $\epsilon>0$ in (\ref{spherecomp}), we see that
$$c_p(S^n)\leq C(M)c_p(M).$$
Combining this with (\ref{spherelbd}), it follows finally that
\begin{equation}
\liminf_{p\to 2}(2-p)c_p(M)\geq C(M)^{-1}\liminf_{p\to 2}(2-p)c_p(S^n)>0,
\end{equation} 
as desired. In particular, putting this together with the conclusion of Proposition \ref{exist1}, we arrive finally at the result of Theorem \ref{existthm}.

\section{Appendix}

\subsection{Proof of Proposition \ref{pharmfcnreg}} \hfill

\hspace{3mm} In this short section, we demonstrate the independence from the parameter $p\in [\frac{3}{2},2]$ of some standard estimates for $p$-harmonic functions (namely, the $W^{1,\infty}$ and $W^{2,p}$ estimates discussed in Proposition \ref{pharmfcnreg}). This is simply a matter of keeping track of $p$ in the estimates of \cite{DiB} and \cite{Lew}, but we give some details in the interest of completeness.

\hspace{3mm} Let $B_2(x)$ be a geodesic ball in a manifold $M^n$ satisfying the sectional curvature bound
$$|sec(M)|\leq k,$$
and let $\varphi\in W^{1,p}(B_2(x),\mathbb{R})$ be a $p$-harmonic function on $B_2(x)$ for $p\in [\frac{3}{2},2]$. Recall that, by the convexity of the $p$-energy functional, $\varphi$ must be the unique minimizer for the $p$-energy with respect to its Dirichlet data.

\hspace{3mm} For $\epsilon>0$, we consider as in \cite{Lew} the perturbed $p$-energy functionals
$$F_{\epsilon}(\psi)=\int (\epsilon+|d\psi|^2)^{p/2},$$
and let $\varphi_{\epsilon}\in W^{1,p}(B_2(x))$ minimize $F_{\epsilon}(\psi)$ with respect to the condition $\psi-\varphi \in W_0^{1,p}(B_2(x))$. Setting
$$\gamma_{\epsilon}:=(\epsilon+|d\varphi_{\epsilon}|^2)^{1/2},$$
we then have that
\begin{equation}\label{perteq}
div(\gamma_{\epsilon}^{p-2}d\varphi_{\epsilon})=0,
\end{equation}
and by standard results on quasilinear equations of this form (see, e.g., Chapter 4 of \cite{LU}), it follows that $\varphi_{\epsilon}$ is a smooth, classical solution of (\ref{perteq}). Moreover, since $\varphi$ is the unique $p$-energy minimizer with respect to its Dirichlet data, we know that $\varphi_{\epsilon}\to \varphi$ strongly in $W^{1,p}(B_2)$ as $\epsilon\to 0$. The task now (as in \cite{DiB}, \cite{Lew}) is to establish estimates of the form given in (\ref{pharmfcnreg}) for the perturbed solutions $\varphi_{\epsilon}$, and pass them to the limit $\epsilon\to 0$.

\hspace{3mm} As in \cite{Lew}, we observe now that, for $\varphi_{\epsilon}$ solving (\ref{perteq}), the energy density $\gamma_{\epsilon}^p$ satisfies the divergence-form equation
\begin{equation}
div(A_{\epsilon}\nabla (\gamma_{\epsilon}^p))=p\gamma_{\epsilon}^{p-2}[\langle A_{\epsilon},Hess(\varphi_{\epsilon})^2\rangle+Ric(d\varphi_{\epsilon},d\varphi_{\epsilon})],
\end{equation}
where $Hess(\varphi_{\epsilon})^2$ denotes the composition
$$Hess(\varphi_{\epsilon})^2(X,Y)=tr(Hess(\varphi_{\epsilon})(X,\cdot)Hess(\varphi_{\epsilon})(Y,\cdot)),$$
and
\begin{equation}\label{adef}
A_{\epsilon}:=I+(p-2)\gamma_{\epsilon}^{-2}d\varphi_{\epsilon}\otimes d\varphi_{\epsilon}.
\end{equation}
In particular, it follows that
\begin{equation}\label{bochy}
div(A_{\epsilon}\nabla (\gamma_{\epsilon}^p))\geq p(p-1)\gamma_{\epsilon}^{p-2}|Hess(\varphi_{\epsilon})|^2-C(n,k)\gamma_{\epsilon}^p.
\end{equation}

\hspace{3mm} Now, since $|\nabla \gamma_{\epsilon}|\leq |Hess(\varphi_{\epsilon})|,$ when we integrate (\ref{bochy}) against a test function $\psi\in C_c^{\infty}(B_2(x))$ with $\psi \equiv 1$ on $B_1(x)$ and $|\nabla \psi|\leq 2$, we find that
\begin{eqnarray*}
\int \psi^2 p(p-1)\gamma_{\epsilon}^{p-2}|Hess(\varphi_{\epsilon})|^2&\leq & \int 2\psi |d\psi|\gamma_{\epsilon}^{p-1}|\nabla \gamma_{\epsilon}|+C(k,n)\gamma_{\epsilon}^p\\
&\leq &\int_{B_2} 4\gamma_{\epsilon}^{p/2}(\psi \gamma_{\epsilon}^{\frac{p-2}{2}}|Hess(\varphi_{\epsilon})|)+C(k,n)\gamma_{\epsilon}^p,
\end{eqnarray*}
and an application of Young's inequality yields
\begin{equation}
p(p-1)\int \psi^2 \gamma_{\epsilon}^{p-2}|Hess(\varphi_{\epsilon})|^2\leq \frac{C(k,n)}{(p-1)}\int_{B_2}\gamma_{\epsilon}^p.
\end{equation}
In particular, since H\"{o}lder's inequality gives
$$\int_{B_1} |Hess(\varphi_{\epsilon})|^p\leq \left(\int_{B_1}\gamma_{\epsilon}^{p-2}|Hess(\varphi_{\epsilon})|^2\right)^{p/2}\left(\int\gamma_{\epsilon}^p\right)^{\frac{2-p}{2}},$$
it follows that
$$\|d\varphi_{\epsilon}\|^p_{W^{1,p}(B_1)}\leq \frac{C(k,n)}{(p-1)^2}\int_{B_2}\gamma_{\epsilon}^p,$$
and since $p\in [\frac{3}{2},2]$, we can rewrite this as
\begin{equation}\label{pertw2p}
\|d\varphi_{\epsilon}\|^p_{W^{1,p}(B_1)}\leq C(k,n)\int_{B_2}\gamma_{\epsilon}^p.
\end{equation}

\hspace{3mm} To obtain $L^{\infty}$ estimates for $\gamma_{\epsilon}$, we can apply Moser iteration to (\ref{bochy}). Since the eigenvalues of 
$$A_{\epsilon}=I+(p-2)\gamma_{\epsilon}^{-2}d\varphi_{\epsilon}\otimes d\varphi_{\epsilon}$$
are bounded between $p-1$ and $1$, and we are working with $p\in [\frac{3}{2},2]$, it is easy to see that the resulting estimate has the desired form
\begin{equation}\label{pertlip}
\|d\varphi_{\epsilon}\|_{L^{\infty}(B_1)}^p\leq \|\gamma_{\epsilon}\|_{L^{\infty}(B_1)}^p\leq C(k,n)\int_{B_2}\gamma_{\epsilon}^p.
\end{equation}

\hspace{3mm} Finally, since $\varphi_{\epsilon}\to \varphi$ strongly in $W^{1,p}(B_2(x))$, we have that
$$\lim_{\epsilon\to 0}\int_{B_2}\gamma_{\epsilon}^p=\int_{B_2} |d\varphi|^p,$$
and it follows from (\ref{pertlip}) and (\ref{pertw2p}) that
\begin{equation}
\|d\varphi\|^p_{L^{\infty}(B_1)}\leq \liminf_{\epsilon\to 0}\|d\varphi_{\epsilon}\|_{L^{\infty}(B_1)}^p\leq C(k,n)\int_{B_2}|d\varphi|^p,
\end{equation}
and
\begin{equation}
\|d\varphi\|_{W^{1,p}(B_1)}^p\leq \liminf_{\epsilon\to 0}\|d\varphi_{\epsilon}\|^p_{W^{1,p}(B_1)}\leq C(k,n)\int_{B_2}|d\varphi|^p.
\end{equation} 
Proposition \ref{pharmfcnreg} then follows by scaling.

\subsection{Proof of Lemma \ref{bigapplem}} \hfill

\hspace{3mm} In this section, we prove Lemma \ref{bigapplem}, which we employed in the proof of Corollary \ref{tuwbds}. For convenience, we restate the lemma here:

\begin{lem}\label{bigapplem2} Let $B_2(x)$ be a geodesic ball in a manifold $M^n$ of sectional curvature $|sec(M)|\leq k$ and injectivity radius $inj(M)\geq 3$. Let $S$ be an $(n-2)$-current in $W^{-1,p}(B_2(x))$ satisfying, for some constant $A$,
\begin{equation}
\langle S,\zeta\rangle \leq A r^{n-p}\|\zeta\|^{p-1}_{L^{\infty}}\|d\zeta\|^{2-p}_{L^{\infty}}\text{\hspace{3mm}}\forall \zeta\in \Omega_c^{n-2}(B_r(y))
\end{equation}
for every ball $B_r(y)\subset B_2(x)$. Suppose also that the $r$-tubular neighborhoods $\mathcal{N}_r(spt(S))$ about the support of $S$ satisfy
\begin{equation}
Vol(B_2(x)\cap \mathcal{N}_r(spt(S)))\leq Ar^p.
\end{equation}
Then there is a constant $C(n,k,A)$ such that for every $1<q<p$, we have
\begin{equation}
\|S\|_{W^{-1,q}(B_1(x))}\leq C(n,k,A)(p-q)^{-1/q}.
\end{equation}
\end{lem}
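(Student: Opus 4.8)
The plan is to bound $\langle S,\zeta\rangle$ for a smooth $(n-2)$-form $\zeta$ with compact support in $B_1(x)$ and $\|\zeta\|_{W^{1,q'}(B_1(x))}\le 1$, where $q'=q/(q-1)$; by duality and density this controls $\|S\|_{W^{-1,q}(B_1(x))}$, and since $q<p$ one has $W^{1,q'}\hookrightarrow W^{1,p'}$ on $B_1(x)$, so the pairing is meaningful. The device that makes the hypothesis usable is a Littlewood--Paley (or mollification-difference) decomposition $\zeta=\sum_\lambda\zeta_\lambda$ of the test form into pieces indexed by the dyadic scales $\lambda\in\{1,\tfrac12,\tfrac14,\dots\}$: it feeds the scale-localized hypothesis on $S$ only through test forms that live at a single scale and are supported in balls of that scale. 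I would record three properties of this decomposition. First, the Bernstein/Poincar\'e-type bounds: for every $y$ and every $\lambda\le 1$,
\[
\|\zeta_\lambda\|_{L^\infty(B_\lambda(y))}\lesssim\lambda^{1-n/q'}\|d\zeta_\lambda\|_{L^{q'}(B_{C\lambda}(y))},\qquad \|d\zeta_\lambda\|_{L^\infty(B_\lambda(y))}\lesssim\lambda^{-n/q'}\|d\zeta_\lambda\|_{L^{q'}(B_{C\lambda}(y))}
\]
(up to rapidly-decaying tails, which are harmless), reflecting that $\zeta_\lambda$ is smooth, essentially supported at scale $\lambda$, and has vanishing mean. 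Second, the $L^{q'}$ almost-orthogonality estimate $\sum_\lambda\|d\zeta_\lambda\|_{L^{q'}(B_1(x))}^{q'}\lesssim\|d\zeta\|_{L^{q'}(B_1(x))}^{q'}$, valid for $q'\in(2,\infty)$ (i.e. $q\in(1,2)$) by the Littlewood--Paley square function. Third, $\sum_\lambda\zeta_\lambda=\zeta$.

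Next, for each scale $\lambda$ I would cover a neighborhood of $\operatorname{spt}(\zeta)$ by finitely-overlapping balls $B_\lambda(y_i^\lambda)\subset B_2(x)$ with a subordinate partition of unity $\chi_i^\lambda$, $|\nabla\chi_i^\lambda|\lesssim\lambda^{-1}$. Since $S$ is supported on $\operatorname{spt}(S)$, only the indices $i$ with $B_\lambda(y_i^\lambda)\cap \operatorname{spt}(S)\ne\varnothing$ contribute to $\langle S,\zeta_\lambda\rangle=\sum_i\langle S,\chi_i^\lambda\zeta_\lambda\rangle$, and the volume hypothesis $\mathrm{Vol}(\mathcal{N}_{2\lambda}(\operatorname{spt}(S))\cap B_2(x))\le A(2\lambda)^p$ bounds the number $N_\lambda$ of such indices by $C(n)A\lambda^{p-n}$. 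On each contributing ball I apply the hypothesis at scale $r=\lambda$ to $\chi_i^\lambda\zeta_\lambda$; because the cutoff gradient $\lambda^{-1}$ is \emph{matched} to the decomposition scale, the product $\|\chi_i^\lambda\zeta_\lambda\|_{L^\infty}^{p-1}\|d(\chi_i^\lambda\zeta_\lambda)\|_{L^\infty}^{2-p}$ collapses by the bounds above to $\lambda^{(p-1)-n/q'}\|d\zeta_\lambda\|_{L^{q'}(B_{C\lambda}(y_i^\lambda))}$, so $|\langle S,\chi_i^\lambda\zeta_\lambda\rangle|\lesssim A\lambda^{\,n-1-n/q'}\|d\zeta_\lambda\|_{L^{q'}(B_{C\lambda}(y_i^\lambda))}$. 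Summing over the $N_\lambda\lesssim A\lambda^{p-n}$ contributing balls with H\"older (exponents $q,q'$) and finite overlap, and using the identity $n-1-n/q'+(p-n)/q=p/q-1$, gives
\[
|\langle S,\zeta_\lambda\rangle|\ \lesssim\ A^{2}\,\lambda^{\,p/q-1}\,\|d\zeta_\lambda\|_{L^{q'}(B_1(x))}.
\]

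Finally I would sum over scales. Applying H\"older in $\lambda$ (exponents $q,q'$) and the almost-orthogonality estimate,
\[
|\langle S,\zeta\rangle|\ \le\ \sum_\lambda|\langle S,\zeta_\lambda\rangle|\ \lesssim\ A^{2}\Big(\sum_{\lambda\le1}\lambda^{\,(p/q-1)q}\Big)^{1/q}\Big(\sum_\lambda\|d\zeta_\lambda\|_{L^{q'}(B_1(x))}^{q'}\Big)^{1/q'}\ \lesssim\ A^{2}\Big(\sum_{\lambda\le1}\lambda^{\,p-q}\Big)^{1/q}\|d\zeta\|_{L^{q'}(B_1(x))},
\]
and since $p>q$ the geometric series $\sum_{\lambda\le1}\lambda^{p-q}=(1-2^{-(p-q)})^{-1}$ is comparable to $(p-q)^{-1}$; recalling $\|d\zeta\|_{L^{q'}}\le1$ this yields $\|S\|_{W^{-1,q}(B_1(x))}\lesssim C(n,k,A)(p-q)^{-1/q}$.

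The main obstacle is exactly the bookkeeping in the two middle steps: one must take the cutoff scale equal to the Littlewood--Paley scale so that the $L^\infty$-to-local-$L^{q'}$ conversions and the ball count $N_\lambda\lesssim A\lambda^{p-n}$ balance to produce precisely the exponent $\lambda^{p/q-1}$, and --- to obtain the sharp power $(p-q)^{-1/q}$ rather than merely $(p-q)^{-1}$ --- the divergence of the scale sum must be isolated inside the $q$-th power by keeping the \emph{piece's} gradient $\|d\zeta_\lambda\|_{L^{q'}}$ (not $\|d\zeta\|_{L^{q'}}$) on the right-hand side of the per-scale estimate and invoking the $L^{q'}$ square-function bound. (Using only the triangle inequality $\sum_\lambda|\langle S,\zeta_\lambda\rangle|$ together with the global bound $\|d\zeta_\lambda\|_{L^{q'}}\le\|d\zeta\|_{L^{q'}}$ one still obtains $\|S\|_{W^{-1,q}}\lesssim(p-q)^{-1}$, which already suffices for the application in Corollary~\ref{tuwbds}.)
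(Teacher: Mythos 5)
Your argument is correct, but it follows a genuinely different path from the paper's. The paper proves this lemma (after reducing to flat $\mathbb{R}^n$ and scalar distributions $f$) by forming the Newtonian potential $w = G*(\chi f)$, decomposing the Green's kernel $|x-y|^{-n}(x-y)_i$ into dyadic annular pieces $\varphi_j$ supported where $|x-y| \sim 2^j$, and applying the scale-invariant hypothesis with $r = 2^{j+1}$ to each piece to obtain the pointwise bound $|dw(x)| \lesssim A\,\dist(x,\operatorname{spt} f)^{-1}$; it then integrates this against the tubular-neighborhood volume bound via the layer-cake formula, getting $\|dw\|_{L^q(B_1)}^q \lesssim A^{q+1}\int_0^2 r^{p-q-1}dr \lesssim A^{q+1}/(p-q)$, and finishes with $\langle f,\varphi\rangle = -\int\langle dw,d\varphi\rangle$. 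So the paper dyadically decomposes the \emph{kernel of the inverse Laplacian}, whereas you dyadically decompose the \emph{test form itself} via Littlewood--Paley, cover each scale by balls meeting $\operatorname{spt}(S)$, and sum via the $\ell^{q'}(L^{q'})$ square-function bound. The mechanism isolating the blow-up rate is the same in both cases — a geometric series $\sum_{\lambda\le1}\lambda^{p-q}$ versus $\int_0^2 r^{p-q-1}dr$, matched to the scale-invariant hypothesis at the right radius — and both produce the same $A$-dependence ($A^{1+1/q}$, up to constants). The paper's route is shorter and entirely elementary (no Littlewood--Paley theory), while yours is a natural dualization and would generalize to settings without a good Green's function.

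One point you dismiss as ``harmless'' deserves an explicit sentence: the Littlewood--Paley pieces $\zeta_\lambda$ (or mollification differences built from Schwartz kernels) have spatial tails, so the local Bernstein bound reads $\|\zeta_\lambda\|_{L^\infty(B_\lambda(y_i))}\lesssim \lambda^{1-n/q'}\sum_{k\ge 0}2^{-Mk}\|d\zeta_\lambda\|_{L^{q'}(B_{2^kC\lambda}(y_i))}$ for any fixed large $M$. When you then sum over the $N_\lambda$ contributing balls with H\"older, the annulus of radius $2^kC\lambda$ centered at the $\lambda$-separated points $y_i^\lambda$ has overlap multiplicity $\lesssim 2^{kn}$, so the $k$-th tail term contributes an extra $2^{kn/q'}$ after the $(\sum_i\cdot^{q'})^{1/q'}$ step; the tail series $\sum_k 2^{-Mk}2^{kn/q'}$ converges once $M$ exceeds the fixed threshold $n(1+1/q)$, which is available since the kernel decay $M$ is free. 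This is the step that makes the per-scale estimate close at the claimed rate $\lambda^{p/q-1}$; with a mollification-difference built from a compactly supported bump, the inverse relation $\zeta_\lambda = K_\lambda * d\zeta_\lambda$ that your Bernstein step needs is not available without tails, so they are unavoidable and must be controlled as above rather than waved away. With that filled in, your proof is complete and gives the same $(p-q)^{-1/q}$ rate as the paper's (and, as you note, a cruder $(p-q)^{-1}$ already suffices for Corollary~\ref{tuwbds}).
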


\hspace{3mm} Let's begin now by making some simple reductions. First, since the given metric $g$ on $B_2(x)$ is uniformly equivalent to the flat one $g_0$ with
$$C(n,k)^{-1}g_0\leq g\leq C(n,k)g_0$$
for some constant $C(n,k)$, it will suffice to establish the lemma in the flat case. Next, we note that every $(n-2)$-current $S$ in $B^n_2(0)\subset \mathbb{R}^n$ is described by a finite collection of scalar distributions $S_{ij}$, where
$$\langle S_{ij},\varphi\rangle:=\langle S, *\varphi dx^i\wedge dx^j\rangle.$$
Thus, it is enough to show that Lemma \ref{bigapplem2} holds with a scalar distribution $f$ in place of the $(n-2)$-current $S$. Our first step in proving this is then the following observation:

\begin{lem}\label{wgradestlem} For $p\in (1,2)$, let $f\in W^{-1,p}(B_2^n(0))$ be a distribution on $B_2^n(0)$ satisfying the estimate
\begin{equation}\label{fscalebds}
\langle f,\varphi\rangle\leq Ar^{n-p}\|\varphi\|_{L^{\infty}}^{p-1}\|d\varphi\|_{L^{\infty}}^{2-p}\text{\hspace{3mm} }\forall\varphi\in C_c^{\infty}(B_r(x))
\end{equation}
for every ball $B_r(x)\subset B_2^n$. Fixing a cutoff function $\chi\in C_c^{\infty}(B_{5/3}(0))$ such that $\chi\equiv 1$ on $B_{4/3}(0)$, set
$$w(x):=\langle (\chi f)(y),G(x-y)\rangle,$$
where $G$ is the $n$-dimensional Euclidean Green's function.
We then have for $x\in B_1(0)\setminus spt(f)$ a pointwise gradient estimate of the form
\begin{equation}\label{wiptwise}
|dw(x)|\leq C_n A\cdot dist(x,spt(f))^{-1}.
\end{equation}
\end{lem}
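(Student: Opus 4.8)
The plan is to differentiate the Newtonian potential under the pairing and control the result by a dyadic decomposition centered at $x$, feeding each annulus into the scaling hypothesis (\ref{fscalebds}). Write $\rho:=\dist(x,\mathrm{spt}(f))>0$; we may assume $\chi f\not\equiv 0$, so that $\mathrm{spt}(\chi f)\subset B_{5/3}(0)$ is nonempty and $\rho<8/3$. Since $w=G\ast(\chi f)$ solves $\Delta w=\chi f$ and $\chi f$ vanishes on $B_\rho(x)$, the function $w$ is harmonic, hence smooth, near $x$, and for each coordinate direction
$$\partial_{x_k}w(x)=\langle (\chi f)(y),\partial_{x_k}G(x-y)\rangle=\langle f,\chi\,\partial_{x_k}G(x-\cdot)\rangle,$$
the pairing being meaningful because, after multiplication by any smooth cutoff vanishing near $x$, $\chi\,\partial_{x_k}G(x-\cdot)$ becomes a legitimate element of $C_c^\infty(B_2(0))$ (it is smooth away from $x$ and compactly supported in $\mathrm{spt}(\chi)\subset B_{5/3}(0)$).

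Next I would fix a smooth partition of unity $\{\psi_j\}_{j=0}^{J}$, with $J$ minimal such that $2^{J+1}\rho\ge 8/3$, subordinate to the annuli $\{\,2^{j-1}\rho<|y-x|<2^{j+1}\rho\,\}$, with $0\le\psi_j\le 1$, $|d\psi_j|\le C(2^j\rho)^{-1}$, and $\sum_j\psi_j\equiv 1$ on $\mathrm{spt}(\chi f)$, so that $\partial_{x_k}w(x)=\sum_{j=0}^{J}\langle f,\chi\psi_j\,\partial_{x_k}G(x-\cdot)\rangle$. On $\mathrm{spt}(\psi_j)$ one has $|y-x|\sim 2^j\rho=:t$, whence the standard pointwise bounds on $G$ give $\|\chi\psi_j\,\partial_{x_k}G(x-\cdot)\|_{L^\infty}\le C_n t^{-(n-1)}$ and $\|d(\chi\psi_j\,\partial_{x_k}G(x-\cdot))\|_{L^\infty}\le C_n t^{-n}$. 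For the ``inner'' indices, those with $2^{j+1}\rho$ small enough that $B_{2^{j+1}\rho}(x)\subset B_2(0)$ and $\chi\equiv 1$ there, the cutoff $\chi\psi_j\,\partial_{x_k}G(x-\cdot)=\psi_j\,\partial_{x_k}G(x-\cdot)$ lies in $C_c^\infty(B_{2^{j+1}\rho}(x))$, so (\ref{fscalebds}) applies with $r=2^{j+1}\rho$ and yields
$$\langle f,\psi_j\,\partial_{x_k}G(x-\cdot)\rangle\le A\,(2^{j+1}\rho)^{n-p}\big(C_n t^{-(n-1)}\big)^{p-1}\big(C_n t^{-n}\big)^{2-p}=C_nA\,t^{\,n-p-(n-1)(p-1)-n(2-p)}=C_nA\,(2^j\rho)^{-1},$$
the exponent collapsing to $-1$ by a one-line computation; summing this geometric series over the inner indices gives a contribution $\le C_nA\,\rho^{-1}$.

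The remaining ``outer'' indices, where $2^{j+1}\rho\gtrsim 1$, number only $O(1)$ (they satisfy $1/3<2^{j+1}\rho<16/3$ once $\rho<8/3$), and for each of them I would instead invoke (\ref{fscalebds}) with the fixed ball $B_{7/4}(0)\supset\mathrm{spt}(\chi)$: on $\mathrm{spt}(\psi_j)$ one has $t\gtrsim 1$, so both $\|\chi\psi_j\,\partial_{x_k}G(x-\cdot)\|_{L^\infty}$ and $\|d(\chi\psi_j\,\partial_{x_k}G(x-\cdot))\|_{L^\infty}$ are $\le C_n$ (here the extra term coming from $d\chi$ is harmless since $t\gtrsim 1$), whence $\langle f,\chi\psi_j\,\partial_{x_k}G(x-\cdot)\rangle\le C_nA$; summing the boundedly many such terms and using $\rho<8/3$ gives a contribution $\le C_nA\le C_nA\,\rho^{-1}$. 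Adding the two regimes and summing over $k=1,\dots,n$ yields $|dw(x)|\le C_nA\,\rho^{-1}$, as claimed. The only genuinely delicate point is the bookkeeping imposed by the requirement $B_r(y)\subset B_2(0)$ in the hypothesis (\ref{fscalebds}): this is exactly what forces the split into inner dyadic scales (estimated with balls centered at $x$) and the $O(1)$ outer scales (estimated with the single ball $B_{7/4}(0)$, using that $\chi$ is supported in $B_{5/3}(0)$); once that is in place, the argument reduces to the observation that the scaling built into (\ref{fscalebds}) is tuned precisely so that each annulus at distance $t$ from $x$ contributes at order $t^{-1}$.
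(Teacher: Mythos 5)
Your proof is correct and takes essentially the same approach as the paper's: differentiate the Newtonian potential, decompose the kernel $\partial_k G(x-\cdot)$ dyadically around $x$, feed each annular piece into the scaling hypothesis (the exponent collapses to $-1$), and sum the resulting geometric series. In fact you are slightly more careful than the paper on one point: the paper applies (\ref{fscalebds}) with $r=2^{j+1}$ for $j$ up to $2$, and the balls $B_{2^{j+1}}(x)$ for the outer scales are not actually contained in $B_2^n(0)$; your explicit split into inner dyadic scales (where $B_{2^{j+1}\rho}(x)\subset B_2(0)$) and the $O(1)$ outer scales (estimated with a single fixed ball inside $B_2$ containing $\operatorname{spt}\chi$) closes that small gap cleanly.
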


\begin{proof} For $x\in B_1\setminus spt(f)$, we observe that the pointwise derivatives $w_i(x):=\partial_iw(x)$ are well-defined, and given by
$$w_i(x):=\partial_iw(x)=c_n\langle (\chi f)(y), |x-y|^{-n}(x-y)_i\rangle,$$
where $c_n$ is a dimensional constant. 

\hspace{3mm} To establish (\ref{wiptwise}), first choose a function $\zeta\in C_c^{\infty}([\frac{1}{2},2])$ satisfying
$$\zeta\equiv 1\text{ on }[\frac{3}{4},\frac{3}{2}]\text{ and }|\zeta'|\leq 10,$$
and for $j\in \mathbb{Z}$, set 
$$\zeta_j(t):=\zeta(2^{-j}t).$$
Defining
$$\eta_j(t):=\frac{\zeta_j(t)}{\Sigma_{k\in\mathbb{Z}}\zeta_k(t)},$$
it's easy to see that the functions $\eta_j$ satisfy
\begin{equation}\label{etajprop1}
spt(\eta_j)\subset (2^{j-1},2^{j+1}),
\end{equation}
\begin{equation}
\Sigma_{j\in\mathbb{Z}}\eta_j(t)=1,
\end{equation}
and
\begin{equation}\label{etajprop3}
|\eta_j'|\leq 10\cdot 2^{-j}.
\end{equation}

\hspace{3mm} Given $x\in B_1^n\setminus spt(f)$, let $m=\lceil |\log_2\delta|\rceil$, so that
$$2^{1-m}\geq dist(x,spt(f))\geq 2^{-m}.$$
Writing
\begin{eqnarray*}
w_i(x)&=&c_n\langle (\chi f)(y),|x-y|^{-n}(x-y)_i\rangle\\
&=&c_n\langle (\chi f)(y),\Sigma_{j\in\mathbb{Z}}\eta_j(|x-y|)|x-y|^{-n}(x-y)_i\rangle,
\end{eqnarray*}
and observing that 
$$1-\Sigma_{j=-m}^2\eta_j(|x-y|)=0$$
when $y\in spt(\chi f)\subset B_4(x)\setminus B_{2^{-m}}(x),$ it follows that
\begin{eqnarray*}
w_i(x)&=&c_n\langle (\chi f)(y),\Sigma_{j=-m}^2\eta_j(|x-y|)|x-y|^{-n}(x-y)_i\rangle\\
&=&c_n\Sigma_{j=-m}^2\langle (\chi f)(y),\eta_j(|x-y|)|x-y|^{-n}(x-y)_i\rangle.
\end{eqnarray*}

\hspace{3mm} Setting
$$\varphi_j(y):=\chi(y)\eta_j(|x-y|)|x-y|^{-n}(x-y)_i,$$
we can then use (\ref{etajprop1})-(\ref{etajprop3}) to see that
$$spt(\varphi_j)\subset B_{2^{j+1}}(x),$$
$$\|\varphi_j\|_{L^{\infty}}\leq 2^{(j-1)(1-n)},$$
and 
$$\|d\varphi_j\|_{L^{\infty}}\leq C_n2^{-n(j-1)}.$$
By (\ref{fscalebds}), it therefore follows that
\begin{eqnarray*}
|\langle f,\varphi_j\rangle|&\leq & A(2^{j+1})^{n-p}\|\varphi_j\|_{L^{\infty}}^{p-1}\|d\varphi_j\|_{L^{\infty}}^{2-p}\\
&\leq & C_nA(2^{j+1})^{n-p}\cdot 2^{(j-1)(1-n)(p-1)}\cdot 2^{-n(2-p)(j-1)}\\
&\leq & C_n' A 2^{-j}.
\end{eqnarray*}
Summing from $j=-m$ to $j=2$, we obtain finally
\begin{eqnarray*}
|w_i(x)|&=&|c_n\Sigma_{j=-m}^2\langle f, \varphi_j(y)\rangle|\\
&\leq & C_nA\Sigma_{j=-m}^2 2^{-j}\\
&\leq & C_n A 2^m\\
&\leq & 2C_n A \cdot dist(x,spt(f))^{-1},
\end{eqnarray*}
giving the desired estimate (\ref{wiptwise}).
\end{proof}

\begin{cor}\label{wneg1qbds}
Let $f\in W^{-1,p}(B_2^n(0))$ be as in Lemma \ref{wgradestlem}, satisfying
\begin{equation}
\langle f,\varphi\rangle\leq Ar^{n-p}\|\varphi\|_{L^{\infty}}^{p-1}\|d\varphi\|_{L^{\infty}}^{2-p} \text{ }\forall \varphi \in C_c^{\infty}(B_r(x))
\end{equation}
for every ball $B_r(x)\subset B_2(0)$. In addition, suppose that the tubular neighborhoods $\mathcal{N}_r(spt(f))$ about the support of $f$ satisfy the volume bound
\begin{equation}\label{sptfvol}
Vol(\mathcal{N}_r(spt(f)))\leq Ar^p.
\end{equation}
Then there is a constant $C(n,A)<\infty$ depending only on $n$ and $A$ such that for every $q \in (1,p)$, we have the estimate
\begin{equation}
\|f\|_{W^{-1,q}(B_1(0))}\leq C(n,A)(p-q)^{-1/q}.
\end{equation}
\end{cor}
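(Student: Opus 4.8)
The plan is to bound $\|f\|_{W^{-1,q}(B_1(0))}$ by realizing $f$ as a divergence of the gradient of the Newtonian potential $w=G*(\chi f)$ from Lemma \ref{wgradestlem}, and then estimating $\|dw\|_{L^q(B_1(0))}$ by means of the pointwise gradient bound (\ref{wiptwise}) and the volume hypothesis (\ref{sptfvol}).

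First I would record that, since $f\in W^{-1,p}(B_2^n(0))$, we may write $\chi f=g_0+\sum_{i=1}^n\partial_ig_i$ with $g_0,\dots,g_n\in L^p(\mathbb{R}^n)$ supported in $B_{5/3}(0)$; the interior $L^p$-theory for the Laplacian (Calder\'on--Zygmund bounds for the second derivatives of $G$, together with weak Young's inequality for $\nabla G$) then gives $w\in W^{1,p}_{loc}(\mathbb{R}^n)$, hence $w\in W^{1,p}(B_{4/3}(0))$, and $w$ solves $\Delta w=-\chi f$ distributionally. Since $\chi\equiv 1$ on $B_{4/3}(0)$, it follows that $f=-div(dw)$ on $B_1(0)$, so for every $\varphi\in C_c^\infty(B_1(0))$ and $q\in(1,p)$ we have
\[
|\langle f,\varphi\rangle|=\left|\int_{B_1(0)}\langle dw,d\varphi\rangle\right|\le\|dw\|_{L^q(B_1(0))}\,\|d\varphi\|_{L^{q'}(B_1(0))},
\]
and therefore $\|f\|_{W^{-1,q}(B_1(0))}\le\|dw\|_{L^q(B_1(0))}$. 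The task is thus reduced to estimating the latter.

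Next I would combine Lemma \ref{wgradestlem} with two soft facts: $spt(f)$ has Lebesgue measure zero (immediate from (\ref{sptfvol}) by letting $r\to 0$), and the weak gradient of the Sobolev function $w$ agrees a.e.\ with its classical gradient on the open set $B_1(0)\setminus spt(f)$, where $w$ is harmonic. This gives the a.e.\ bound $|dw(x)|\le C_nA\,\dist(x,spt(f))^{-1}$ on $B_1(0)$, so that, writing $V(s):=Vol\bigl(\{x\in B_1(0):\dist(x,spt(f))<s\}\bigr)\le\min\{As^p,\omega_n\}$ (the first bound being (\ref{sptfvol})), a layer-cake computation yields
\[
\|dw\|_{L^q(B_1(0))}^q\le(C_nA)^q\int_{B_1(0)}\dist(x,spt(f))^{-q}\,dx=(C_nA)^q\,q\int_0^\infty s^{-q-1}V(s)\,ds\le(C_nA)^q\Bigl(\frac{qA}{p-q}+\omega_n\Bigr).
\]
Since $1<q<p<2$, the right side is at most $C(n,A)(p-q)^{-1}$; taking $q$-th roots and absorbing the now-bounded prefactor gives $\|dw\|_{L^q(B_1(0))}\le C(n,A)(p-q)^{-1/q}$, which in view of the previous paragraph is the conclusion of the corollary.

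The quantitative core of the argument is the single integral $\int_0^1 s^{p-q-1}\,ds=(p-q)^{-1}$, whose convergence is exactly where the hypothesis $q<p$ is used and which produces the blow-up rate $(p-q)^{-1/q}$ as $q\uparrow p$. Everything else is routine: the interior $W^{1,p}$-regularity of the potential $w$ (a standard elliptic estimate), the identification of its weak gradient with the classical one off the null set $spt(f)$, and the representation $f=-div(dw)$ on $B_1(0)$ coming from $\chi\equiv 1$ there. I do not anticipate any genuine obstacle beyond this bookkeeping. Combined with the reduction to scalar components and to the flat metric described before the statement of Lemma \ref{bigapplem2}, the corollary also yields Lemma \ref{bigapplem}.
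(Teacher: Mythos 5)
Your argument is correct and is essentially the paper's own proof: both use the potential $w$ from Lemma \ref{wgradestlem}, the integration-by-parts duality $|\langle f,\varphi\rangle|\le\|dw\|_{L^q}\|d\varphi\|_{L^{q'}}$, and a layer-cake/coarea estimate combining the pointwise bound $|dw|\lesssim \dist(\cdot,spt(f))^{-1}$ with the tubular-neighborhood volume hypothesis to produce $\int_0^1 s^{p-q-1}\,ds=(p-q)^{-1}$. Your extra remarks (the decomposition $\chi f=g_0+\sum\partial_i g_i$ to justify $w\in W^{1,p}_{loc}$, and that $spt(f)$ is Lebesgue-null) are just small supporting details that the paper leaves implicit.
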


\begin{proof} By Lemma \ref{wgradestlem}, there exists a function $w\in W^{1,p}(B_2^n(0))$ satisfying
$$\Delta w=f\text{ on }B_{4/3}(0)$$
and
\begin{equation}\label{ptwisegradw}
|dw(x)|\leq \frac{C_nA}{dist(x,spt(f))}
\end{equation}
for $x\in B_1(0)\setminus spt(f)$. For any $\varphi \in C_c^{\infty}(B_1(0))$ and $q\in (1,p)$, we then have
\begin{eqnarray*}
\langle f,\varphi\rangle&=&\langle \Delta w, \varphi\rangle\\
&=&-\int \langle dw,d\varphi\rangle\\
&\leq &\|dw\|_{L^q(B_1(0))}\|d\varphi\|_{L^{q'}},
\end{eqnarray*}
while, by (\ref{ptwisegradw}) and (\ref{sptfvol}), we see that
\begin{eqnarray*}
\int_{B_1(0)}|dw|^q&\leq&C_nA^q\int_{B_1(0)}dist(x,spt(f))^{-q}\\
&\leq &CA^q\int_0^2q r^{-q-1}Vol(\mathcal{N}_r(spt(f)))dr\\
&\leq &C A^{q+1}\int_0^2 r^{p-q-1}dr\\
&\leq &\frac{C A^{q+1}}{p-q}.
\end{eqnarray*}
Thus, we indeed have
$$\langle f,\varphi\rangle \leq C(n,A)(p-q)^{-1/q}\|d\varphi\|_{L^{q'}},$$
the desired $W^{-1,q}$ estimate.
\end{proof}

As remarked previously, Lemma \ref{bigapplem2} now follows by applying Corollary \ref{wneg1qbds} to the scalar component distributions of the $(n-2)$-current $S$.

\end{document}